\numberwithin{equation}{section}
\newcommand{\Expect}{\operatorname{\mathbb{E}}}
 \newcommand{\Indicator}{\operatorname{\mathbb{I}}}
 \newcommand{\Ito}{It\^o}
 \newcommand{\Prob}{\operatorname{\mathbb{P}}}
 \newcommand{\Reals}{\mathbb{R}}
 \newcommand{\talpha}{\widetilde{\alpha}}
 \newcommand{\tN}{\widetilde{N}}
 \newcommand{\stau}{\tau^H}
 \newcommand{\vtau}{\tau^V}
 \newtheorem{thm}{Theorem}[section]
 \newtheorem{lem}[thm]{Lemma}
 \newtheorem{cor}[thm]{Corollary}
\theoremstyle{definition}
\newtheorem{remark}[thm]{Remark}
\newcommand{\EE}{\Expect}
\newcommand{\PP}{\Prob}
\providecommand{\rpar}[1]{\left( #1 \right)}               
\providecommand{\kpar}[1]{\left\{ #1 \right\}}              
\providecommand{\ppar}[1]{\left\langle #1 \right\rangle}     
\providecommand{\abs}[1]{\left\vert #1 \right\vert}         
\renewcommand{\L}{\mathcal{L}}
\newcommand{\B}{\mathcal{B}}
\newcommand{\C}{\mathcal{C}}
\newcommand{\F}{\mathcal{F}}
\newcommand{\HH}{\mathbb{H}}
\newcommand{\wh}{\widehat}
\newcommand{\A}{A}
\newcommand{\tgam}{T}
\newcommand{\eps}{\varepsilon}
\newcommand{\prt}{\partial}
\title{Gravitation versus Brownian motion}
\author{Sayan Banerjee \ \ \ Krzysztof Burdzy \ \ \ Mauricio Duarte}
\begin{document}
\maketitle
\begin{abstract}
We investigate the motion of an inert (massive) particle being impinged from below by a particle performing (reflected) Brownian motion. The velocity of the inert particle increases in proportion to the local time of collisions and decreases according to a constant downward gravitational acceleration. We study fluctuations and strong laws of the motion of the particles. We further show that the joint distribution of the velocity of the inert particle and the gap between the two particles converges in total variation distance to a stationary distribution which has an explicit product form.
\end{abstract}

\textbf{Keywords and phrases: }Brownian motion, inert drift, local time, gravitation, total variation distance.\\\\
\textbf{AMS 2010 subject classifications: }Primary 60J65; secondary 60J55.

\section{Introduction}

We will investigate the motion of an inert (massive) particle that is impinged from below by a particle performing (reflected) Brownian motion. 
Whenever the two particles collide, the velocity of the inert particle increases in proportion to the local time of collisions. Furthermore, there is a gravitational field that pulls the inert particle downwards by giving it a constant acceleration. 
The Brownian particle is reflected on the trajectory of the inert particle according to the usual Skorokhod recipe.

Formally, the motion of the two particles will be defined by a system of SDE's.
We will denote the driving Brownian motion by $B$. We will use $X$ and $S$ to denote the trajectories of the reflecting Brownian particle and the inert particle, respectively, and $V$ to denote the velocity of the inert particle. Gravitation will be represented by a constant acceleration $g>0$.
We will write  $L$ to denote the intersection local time between the two particles, defined as the unique continuous non-decreasing process increasing only when $S_t=X_t$ (i.e., $L_t-L_0=\int_0^t\Indicator_{\{S_u=X_u\}}dL_u$ for all $t \ge 0$). The SDE's are
\begin{align}\label{eqnarray:eqmotion}
\begin{cases}
d X_t = d B_t - d L_t, \\
d V_t = d L_t-g d t,\\
d S_t = V_t d t. 
\end{cases}
\end{align}
There is also an extra condition that $S_t \ge X_t$ for all $t \ge 0$, that is, the Brownian particle and the inert particle can collide but their trajectories cannot cross (this applies, in particular, to the initial condition, i.e., $S_0 \geq X_0$). We will show  existence and uniqueness  of the strong solution to 
\eqref{eqnarray:eqmotion} in Theorem \ref{thm:existence_sde}.

The model without the gravitational component was originally introduced in \cite{knight}. The motivation came from trying to mathematically model the joint motion of a Brownian particle in a liquid and a semi-permeable membrane (thought of as the inert particle) which is permeable to the microscopic liquid molecules but not to the macroscopic Brownian particle. Without gravitation, the inert particle moves with constant velocity in the absence of collisions and the velocity increases (in proportion to the local time of collisions) only when the particles collide. Thus, it is clear that there will be a random time after which the particles never collide and the inert particle ``escapes" the Brownian particle with constant velocity. The laws of the inverse velocity process $V^{(-1)}$ and the ``escape velocity" were explicitly computed in \cite{knight}.

The effect of gravitation is typically considered negligibly small at the scale of molecules but we use the term ``gravitation'' as a representative of any constant force due to a potential or mechanical pressure.
 The gravitation component significantly changes the behavior of the model as the velocity of the inert particle is no longer an increasing process and the inert particle can never escape the Brownian particle (they keep colliding). The joint behavior of the two particles is thus, a priori, far from clear. Among other things, we will show that in this battle between the gravitational pull and the Brownian push, gravitation ``wins'' as both  particles eventually ``fall'' with asymptotic velocity $-g$.

A number of related models were studied in \cite{BBCH,White,BW}. In \cite{BBCH}, reflecting diffusions were considered in bounded smooth domains in $\mathbb{R}^d$, that acquired drift proportional to the local time spent on the boundary of the domain. Product form stationary distributions were derived for the joint law of the position of the reflecting process and its drift. In \cite{White}, general classes of processes with inert drift were constructed and recurrence, transience and stationary distributions were investigated for some particular examples. In \cite{BW}, some Markov processes with discrete state spaces were studied as approximations to processes with inert drift. Necessary and sufficient conditions in order to have a stationary distribution in product form were given for these discrete state space Markov processes and it was conjectured that these conditions carry over to some models with continuous state space via appropriate limiting operations.

We will now discuss a few aspects of the model that we find intriguing. The constant $g$ enters the model as the acceleration but ends up as the asymptotic velocity for both inert and Brownian particles, $S$ and $X$
(see Theorem \ref{thm:stronglaw}). 
With the hindsight, one could provide the following ``explanation'' for this strange transformation of the role of $g$. Since the local time represents the change of position for $X$ and the change of velocity for $S$, 
it is perhaps not so surprising that the acceleration of $S$ becomes the velocity for $X$. 
Because of the parabolic drift, excursions of S above X are not very large, which makes the two particles remain close  on  large time scales, so their asymptotic velocity must be the same. We study the ``zero-noise case" (i.e. with $B_t \equiv 0$) in Remark \ref{remark:detsyst} and show that an analogous result holds for this deterministic system, which provides further evidence as to why this result might be true even in the presence of noise.

The product form of the stationary distribution for $(V, S-X)$ (see Theorem \ref{thm:stat}) came as a surprise to us but, with the hindsight, we see that the model 
\eqref{eqnarray:eqmotion} fits into the framework of \cite[Section 3]{BW}. In other words, an appropriate discretized version of 
\eqref{eqnarray:eqmotion} should satisfy \cite[Cor.~2.3]{BW}, and it might be possible to perform a limiting operation on that discretized model as conjectured in \cite{BW} and deduce the product form stationary distribution for the original model, although we do not prove this in this paper.

The variance of the first component of the stationary distribution, representing $V$, does not depend on $g$. Once again, this is not surprising with the hindsight, since the stationary distribution for the local time in a related model in \cite[Thm.~6.2]{BBCH} does not depend on the state space (Euclidean domain) for the Brownian particle. 

The rest of the article is organized as follows. 
Some of our main results are stated in Section \ref{mainres}.
Existence and uniqueness  of the strong solution to 
\eqref{eqnarray:eqmotion} is proved in Section \ref{sec:exist}. Section \ref{sec:estim} is devoted to some technical estimates. Fluctuations of the velocity and gap processes during excursions of $V$ above and below the level $-g$ are studied in Section \ref{sec:fluct}. These estimates are turned into universal fluctuation results for $V$ and $S-X$ and laws of large numbers in Section \ref{sec:strong}. Finally, the stationary distribution for the velocity and gap processes is derived in Section \ref{sec:station}.

\section{Main results}\label{mainres}

This section contains statements of those of our main results that are non-technical.

The first theorem states that $Z:=(V,S-X)$ has a unique stationary distribution which is the product of a Gaussian distribution and an exponential distribution. We will prove in Section \ref{sec:station} that the laws of $Z_t$ converge in  the total variation distance to the stationary distribution.

\begin{thm}\label{thm:stat}
The process $Z:=(V,S-X)$ has a 
unique stationary distribution
with the
density with respect to Lebesgue measure given by
\begin{align}
\label{eq:density}
\xi(v,h) = \frac{2g}{\sqrt{\pi} } e^{-2gh}e^{-(v+g)^2},
\qquad v\in \Reals, h\geq 0.
\end{align}
Furthermore, $Z_t$ converges to this distribution in total variation distance as $t \rightarrow \infty$. 
\end{thm}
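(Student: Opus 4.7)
The plan is to view $Z = (V, H)$ with $H := S - X \ge 0$ as a two-dimensional Markov process on the half-plane $\Reals\times[0,\infty)$, governed by
\begin{align*}
dV_t &= -g\,dt + dL_t,\\
dH_t &= V_t\,dt - dB_t + dL_t,
\end{align*}
where $L$ is a boundary local time at $\{H = 0\}$ whose oblique reflection direction is $(1,1)$. I would split the argument into (i) verification that $\xi$ is an invariant density, and (ii) uniqueness of the invariant measure together with total variation convergence.

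For step (i), I would check directly that $\xi$ solves the adjoint of the generator
\[
\mathcal{A}f(v,h) \;=\; -g\,\partial_v f + v\,\partial_h f + \tfrac{1}{2}\partial_{hh} f,
\]
on smooth $f$ obeying the reflection condition $(\partial_v + \partial_h)f(v,0) = 0$. Since $\partial_v\xi = -2(v+g)\xi$ and $\partial_h\xi = -2g\xi$, the interior identity $-g\,\partial_v\xi - \partial_h(v\xi) + \tfrac{1}{2}\partial_{hh}\xi \equiv 0$ follows by algebraic cancellation. Integration by parts generates a single boundary integral on $\{h = 0\}$ equal to $\int \xi(v,0)\bigl[-(v+g)f(v,0) - \tfrac{1}{2}\partial_h f(v,0)\bigr]\,dv$; substituting $\partial_h f = -\partial_v f$ from the reflection BC and integrating by parts once more in $v$ annihilates it. Hence $\int \mathcal{A}f\,\xi\,dv\,dh = 0$ for all admissible test $f$, confirming invariance.

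For step (ii), my plan is to establish positive Harris recurrence via a Foster--Lyapunov drift condition combined with a small-set minorisation, and then invoke the standard Meyn--Tweedie ergodic theorem to obtain a unique invariant probability measure together with convergence in total variation. A Lyapunov function adapted to $\xi$ --- for instance of the form $U(v,h) = e^{\lambda h}\bigl(1 + (v+g)^{2k}\bigr)$ with suitably chosen $\lambda < 2g$ and integer $k \ge 1$ --- should satisfy $\mathcal{A}U \le -c_1 U + c_2 \mathbf{1}_C$ on a compact $C$, consistently with the strong laws proved in Section \ref{sec:strong}: once $V$ is near $-g$, the $V\,dt$ term in $dH$ drives $H$ back to $0$, where the local time pushes $V$ upward, producing an Ornstein--Uhlenbeck-like restoring mechanism around $v = -g$ together with exponential-type control of $h$. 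Combined with step (i), the uniqueness of the invariant measure forces the stationary law to be $\xi$.

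The principal obstacle is the minorisation condition on compact sets. The SDE is degenerate: Brownian noise enters only the $H$-equation, and $V$ receives randomness exclusively through the local time $L$, so no uniform elliptic lower bound on the transition kernel is immediately available. I would address this by a support/control argument: for any compact $C$, any start $z \in C$ can be steered in a bounded time into a small neighbourhood of an arbitrary target state by a smooth perturbation of $B$, because the number and depth of excursions of $H$ away from $0$ --- and hence the accumulated local time, and hence the terminal $V$ --- can be adjusted almost independently of the terminal $H$. Turning this heuristic into a uniform Doeblin lower bound $P^{t_0}(z,\cdot) \ge \delta\,\nu(\cdot)$ for $z \in C$ is the technical crux of the proof; once it is in place, the standard Harris ergodic machinery yields both uniqueness and convergence in total variation distance to $\xi$.
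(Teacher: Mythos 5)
Your step (i) -- the integration-by-parts verification that $\xi$ is a stationary density for the generator with oblique reflection $(1,1)$ -- is essentially the same computation the paper performs in the proof of Theorem \ref{thm:stat}. (One technical remark: the paper invokes a result of Kang and Ramanan via Corollary \ref{cor:stationary}, which says that stationarity is equivalent to the \emph{inequality} $\int \L f\,d\pi\le 0$ holding for \emph{all} $f\in C^2_c(\overline{\HH})$ with $(\nabla f)^T\gamma\ge 0$ on $\partial\HH$, not just the equality for $f$ with $(\nabla f)^T\gamma=0$. Your computation actually yields $\int \L f\,\xi = -\tfrac{g}{\sqrt{\pi}}\int (\nabla f(v,0))^T\gamma\, e^{-(v+g)^2}\,dv$ for general $f$, so it does cover the full class, but you should keep the boundary term in that form rather than imposing $\partial_h f = -\partial_v f$ from the start.)

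The genuine gap is in step (ii). You correctly identify that the diffusion is degenerate (noise enters only the $H$-equation, and $V$ is driven solely by the local time), and you explicitly defer the minorisation condition -- the uniform Doeblin bound $P^{t_0}(z,\cdot)\ge\delta\,\nu(\cdot)$ on compacts -- calling it ``the technical crux.'' But that crux is left entirely unproved, and for this degenerate, obliquely reflected system there is no off-the-shelf hypoellipticity or support theorem that delivers it. As it stands, your uniqueness and total-variation arguments rest on an unverified hypothesis. The paper avoids the minorisation problem altogether: it observes that $Z$ hits the single state $(-g,0)$ a.s. in finite time, so $(-g,0)$ is an accessible \emph{atom} and $Z$ is a classical regenerative process with regeneration times $\zeta_k$. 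TV convergence then follows from Thorisson's regenerative-process theorem once one proves (a) the inter-regeneration time $\zeta_1$ has finite mean (Lemma \ref{lem:estzeta}, via the hitting-time estimates of Section \ref{sec:estim}) and (b) $\zeta_1$ has a density, hence is spread out (Lemma \ref{lem:density}, via a Girsanov absolute-continuity argument combined with facts about ladder processes of L\'evy processes). These are the two ingredients your plan is missing; your Lyapunov-function heuristics could plausibly recover (a), but nothing in your sketch addresses anything analogous to (b), and without it even the atom-based route does not give total variation convergence.
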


The next result shows that the fluctuations of the velocity process are of the order $\sqrt{\log t}$ while the fluctuations of the ``gap process'' $S_t - X_t$ are of the order $\log t$.

\begin{thm}\label{thm:velfluc}
For any $Z_0=z$, almost surely,
\begin{align}\label{equation:flucplus}
 -\liminf_{t \rightarrow \infty}\frac{V_t}{\sqrt{\log t}}
 &=\limsup_{t \rightarrow \infty}\frac{V_t}{\sqrt{\log t}} =1,\\
\label{equation:gapplus}
 \limsup_{t \rightarrow \infty}\frac{S_t-X_t}{\log t} &= \frac1{2g},\\
\label{equation:gapminus}
\liminf_{t \rightarrow \infty}\frac{S_t-X_t}{\log t} &= 0.
\end{align}
\end{thm}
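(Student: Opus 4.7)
The plan is to reduce each of the four almost-sure statements to a Borel--Cantelli argument, using the stationary distribution from Theorem \ref{thm:stat} together with the excursion-level estimates for $V$ around the level $-g$ that are advertised as the output of Section \ref{sec:fluct}. The stationary law $\xi$ supplies the heuristic: $V+g$ is centred Gaussian with variance $1/2$, so
\[
\PP_\xi(V \ge u) \asymp u^{-1} e^{-u^2}, \qquad \PP_\xi(S-X \ge h) = e^{-2gh},
\]
and these tails are exactly what produce the scales $\sqrt{\log t}$ and $(\log t)/(2g)$ in the theorem. Convergence in total variation from Theorem \ref{thm:stat} ensures that for large $t$ the marginal laws of $V_t$ and $S_t-X_t$ agree with $\xi$ to all relevant precision, so the classical extreme-value scaling for such tails will dictate the answers.

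For the upper bounds (the $\le$ directions in \eqref{equation:flucplus} and \eqref{equation:gapplus}) I would fix $\varepsilon>0$, take a geometric mesh $t_n = 2^n$, and estimate the probability of
\[
A_n^+ := \Bigl\{\sup_{t\in[t_n,t_{n+1}]} V_t \ge (1+\varepsilon)\sqrt{\log t_n}\Bigr\}
\]
by controlling the maximum height reached by any excursion of $V$ above $-g$ initiated in the window. Section \ref{sec:fluct} should provide a tail of the form $\PP(\text{excursion max} \ge u) \lesssim u^{-1} e^{-u^2}$, and the number of such excursions in $[t_n,t_{n+1}]$ is of order $t_n$ with overwhelming probability (the stationary mean excursion length being finite). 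A union bound then yields $\PP(A_n^+) \lesssim t_n^{1-(1+\varepsilon)^2}/\sqrt{\log t_n}$, which is summable, and the first Borel--Cantelli lemma gives $\limsup V_t/\sqrt{\log t}\le 1$. An entirely analogous argument using excursions below $-g$ handles the $\liminf$ in \eqref{equation:flucplus}, and one using the exponential excursion tail of $S-X$ handles $\limsup(S_t-X_t)/\log t\le 1/(2g)$.

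For the matching lower bounds I would run a second Borel--Cantelli argument on well-separated windows. The strong Markov property applied at a regeneration time inside each window (for example the first $t \ge t_n$ with $S_t=X_t$, which occurs almost surely since $L_t \to \infty$) produces asymptotic independence of the events observed in disjoint windows, with coupling error that decays geometrically along the mesh. Within a single window, the Section \ref{sec:fluct} estimates give a lower bound of order $t_n^{-(1-\varepsilon)^2}$ on the probability that a given excursion of $V$ above $-g$ reaches height $(1-\varepsilon)\sqrt{\log t_n}$; since the typical number of excursions is of order $t_n$, the probability that at least one such excursion succeeds is bounded away from $0$. The conditional Borel--Cantelli lemma then yields the $\ge 1$ direction of \eqref{equation:flucplus}, and the identical construction with $S-X$ in place of $V$ delivers $\limsup(S_t-X_t)/\log t\ge 1/(2g)$. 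Finally, \eqref{equation:gapminus} is essentially immediate: $S-X\ge 0$ and $L_t\to\infty$ a.s.\ force $S_t=X_t$ at arbitrarily large times, so $\liminf(S_t-X_t)/\log t=0$.

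I expect the main obstacle to be in the supporting excursion analysis of Section \ref{sec:fluct} rather than in the Borel--Cantelli bookkeeping itself. One must prove that the excursion maximum of $V$ above $-g$ (and symmetrically below $-g$) really does have a $e^{-u^2}$ tail with the correct constant --- this is where the Gaussian character of the stationary velocity must emerge at the excursion level, presumably via a comparison of $V$ between successive collisions with an Ornstein--Uhlenbeck-type approximation, and via careful control of the joint distribution of excursion length, maximum, and terminal state so that the regeneration step is legitimate. Once those sharp excursion estimates and the regeneration structure are in place, the four statements of the theorem follow by the routine summation outlined above.
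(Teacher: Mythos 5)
Your high-level strategy (regeneration plus Borel--Cantelli, with the stationary law providing the target scales) is the same as the paper's, and you correctly dispose of \eqref{equation:gapminus} by observing that $S_t-X_t$ returns to $0$ at every renewal. But there is a genuine gap in how you propose to obtain the \emph{sharp} constants $1$ and $1/(2g)$, and it is precisely where you yourself flag the difficulty.

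The estimates actually proved in Section \ref{sec:fluct} (Theorems \ref{thm:fluconeside} and \ref{thm:gaponeside}) have the form $C_1 e^{-C_2 a^2} \le \PP(\vtau_{-g+a} < \vtau_{-g+1}) \le C_3 e^{-C_4 a^2}$ with unidentified constants $C_2 \neq C_4$, and likewise for the gap process. Those bounds are enough to place the fluctuations on the scales $\sqrt{\log t}$ and $\log t$, but they cannot by themselves pin down the limits as exactly $\pm 1$ and $1/(2g)$. Similarly, total variation convergence to $\xi$ does not transfer the Gaussian/exponential tails of $\xi$ to the marginal of $Z_t$ at the scale $\sqrt{\log t}$: TV control is additive, not multiplicative, so it says nothing about the ratio of two probabilities that are both of order $t^{-1}$. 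Your suggested route via an Ornstein--Uhlenbeck comparison at the excursion level is exactly the analysis that the paper does \emph{not} attempt.

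What the paper does instead is use the exact stationary density together with the cycle formula
\[
\pi(A) = \frac{\EE_R\!\left(\int_0^{\zeta_1}\Indicator_A(Z_u)\,du\right)}{\EE_R(\zeta_1)}
\]
from Theorem \ref{thm:stationary}, \emph{in reverse}. Given a target set $A$ (say $V_u \ge a$), the numerator equals $C\pi(A)$, which is computable to leading order from \eqref{eq:density}. The numerator is also sandwiched between $\PP_R(T_A < \zeta_1)$ times an infimum and a supremum of expected occupation times of $A$ before regeneration, and Lemma \ref{no5.5} bounds those expected occupation times by polynomial factors in $a$. Dividing through yields the cycle-level crossing probabilities in Lemma \ref{no5.1} with the exact exponents, and from there the Borel--Cantelli argument over i.i.d.\ renewal cycles ($\zeta_n/n \to \EE\zeta_1$ transferring $\log n$ to $\log t$) is routine. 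So the missing idea in your sketch is that one should extract sharp per-cycle exceedance probabilities from the known stationary density via the occupation-time identity, not from the excursion estimates of Section \ref{sec:fluct} directly. A secondary point: working on renewal cycles rather than dyadic windows $[2^n, 2^{n+1}]$ gives you honest i.i.d.\ events, so no coupling or conditional Borel--Cantelli machinery is needed for the lower bounds.
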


We will show that both the ``inert particle'' $S_t$ and the reflected Brownian particle $X_t$ behave like Brownian motion with constant negative drift $-g$ on the large scale and, therefore, they satisfy the same Strong Law of Large Numbers
as Brownian motion with drift. The next theorem gives precise estimates on the oscillations of $S$ and $X$ from Brownian motion with drift $-g$. The strong law follows as a consequence. 

\begin{thm}\label{thm:stronglaw}
For any $Z_0=z$, almost surely,
\begin{align*}
 - \liminf_{t \rightarrow \infty} \frac{X_t-(B_t-gt)}{\sqrt{\log t}} 
 &=\limsup_{t \rightarrow \infty} \frac{X_t-(B_t-gt)}{\sqrt{\log t}}  =1,\\
 \limsup_{t \rightarrow \infty} \frac{S_t-(B_t-gt)}{\log t} & =\frac1{2g},\\
\liminf_{t \rightarrow \infty} \frac{S_t-(B_t-gt)}{\log t}&=0,
\end{align*}
It follows that, a.s.,
\begin{equation*}
\lim_{t \rightarrow \infty}\frac{X_t}{t}=\lim_{t \rightarrow \infty}\frac{S_t}{t}=-g.
\end{equation*}
\end{thm}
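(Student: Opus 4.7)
The plan is to reduce every assertion in Theorem \ref{thm:stronglaw} to Theorem \ref{thm:velfluc} by means of a simple algebraic identity extracted from the SDE system \eqref{eqnarray:eqmotion}. Integrating $dV_t = dL_t - g\,dt$ yields $L_t = V_t - V_0 + gt$ (taking $L_0 = 0$), and substituting into $X_t = X_0 + B_t - L_t$ gives the key identity
\begin{align*}
X_t - (B_t - gt) = X_0 + V_0 - V_t.
\end{align*}
Since $X_0, V_0$ are deterministic constants, dividing through by $\sqrt{\log t}$ and invoking \eqref{equation:flucplus} immediately produces
\begin{align*}
\limsup_{t\to\infty} \frac{X_t - (B_t-gt)}{\sqrt{\log t}} = -\liminf_{t\to\infty}\frac{V_t}{\sqrt{\log t}} = 1,
\end{align*}
and analogously $-\liminf_{t\to\infty}(X_t - (B_t - gt))/\sqrt{\log t} = \limsup_{t\to\infty} V_t/\sqrt{\log t} = 1$.

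For the $S_t$ statements I would decompose
\begin{align*}
S_t - (B_t - gt) = \bigl(X_t - (B_t - gt)\bigr) + (S_t - X_t) = (X_0 + V_0 - V_t) + (S_t - X_t).
\end{align*}
By \eqref{equation:flucplus}, $V_t = O(\sqrt{\log t})$ almost surely, so the first summand is $o(\log t)$ and contributes nothing to the limsup or liminf at scale $\log t$. Hence the limsup and liminf of $(S_t - (B_t - gt))/\log t$ coincide with those of $(S_t - X_t)/\log t$, which are $1/(2g)$ and $0$ respectively by \eqref{equation:gapplus} and \eqref{equation:gapminus}.

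For the strong law, I would divide the identity $X_t = X_0 + V_0 + B_t - gt - V_t$ by $t$. The Brownian SLLN gives $B_t/t \to 0$; the bound $V_t = O(\sqrt{\log t})$ from \eqref{equation:flucplus} gives $V_t/t \to 0$; so $X_t/t \to -g$ almost surely. Then $S_t/t = X_t/t + (S_t - X_t)/t$, and the gap $(S_t - X_t)/t \to 0$ by \eqref{equation:gapplus}, so $S_t/t \to -g$ as well.

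There is no real obstacle here: once Theorem \ref{thm:velfluc} has been established, all claims of Theorem \ref{thm:stronglaw} reduce to the pathwise identity above together with the elementary observation that $\sqrt{\log t} = o(\log t) = o(t)$. The only genuine content has been absorbed into the preceding fluctuation theorem; the present theorem is essentially a bookkeeping corollary that translates fluctuation information about $V$ and $S-X$ into fluctuation information about $X$ and $S$ around the drifted Brownian path $B_t - gt$.
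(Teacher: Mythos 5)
Your proof is correct and takes essentially the same route as the paper: the key identity $X_t-(B_t-gt)=(X_0+V_0)-V_t$ and the decomposition $S_t-(B_t-gt)=(X_0+V_0)-V_t+(S_t-X_t)$, combined with Theorem \ref{thm:velfluc}, are exactly what the paper uses. You simply spell out the elementary limsup/liminf bookkeeping that the paper leaves implicit.
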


\begin{remark}\label{remark:detsyst}
To gain some insight into why one might expect $\lim_{t \rightarrow \infty}\frac{X_t}{t}=\lim_{t \rightarrow \infty}\frac{S_t}{t}=-g$ to hold almost surely, we consider the ``zero-noise case", i.e., the deterministic two-particle system driven by \eqref{eqnarray:eqmotion} with $B_t \equiv 0$. Suppose we start from the initial conditions $S_0=X_0, V_0 <0$. Let $\tau = \inf\{t>0: V_t=0\}$. Then on $[0, \tau]$, $S_t$ is decreasing and one can conclude from the Skorohod equation (see  \cite[Lem.~6.14, Ch.~3]{karatzas}) that $L_t= \sup_{u \le t}(S_0-S_u) = S_0-S_t$. Using this in \eqref{eqnarray:eqmotion}, we obtain
\begin{align}\label{align:detsol}
S_t = X_t, \ \ \ V_t = -g + (V_0+g)e^{-t} \ \ \ \text{ for all } t \le \tau.
\end{align}
The above implies $\tau= \infty$ and $V_t \rightarrow -g$ as $t \rightarrow \infty$. Thus,
\begin{equation}\label{equation:detlimit}
\lim_{t \rightarrow \infty}\frac{X_t}{t}=\lim_{t \rightarrow \infty}\frac{S_t}{t}=\lim_{t \rightarrow \infty}\frac{1}{t}\int_0^tV_udu=-g
\end{equation}
holds for the zero-noise case when $S_0=X_0, V_0 <0$. If $S_0>X_0$ and $V_0 \in \mathbb{R}$, it follows from \eqref{eqnarray:eqmotion} that if $\sigma = \inf\{t \ge 0: S_t=X_t\}$, then $\sigma < \infty$ and $V_{\sigma}<0$, and thus, we can perform the same computations for $t>\sigma$ to deduce that \eqref{equation:detlimit} holds.

Finally, suppose $S_0=X_0$ and $V_0 \ge 0$. If $S_t=X_t$ for all $t \ge 0$, we use \eqref{eqnarray:eqmotion} to obtain $dL_t=-dX_t=-dS_t = -V_tdt$ which gives us $dV_t=-V_tdt - gdt$. This yields \eqref{align:detsol} and consequently \eqref{equation:detlimit}. Otherwise, there exists $t_0>0$ such that $S_{t_0}>X_{t_0}$ and the previous calculations again yield \eqref{equation:detlimit}. Thus, we see that $\lim_{t \rightarrow \infty}\frac{X_t}{t}=\lim_{t \rightarrow \infty}\frac{S_t}{t}=-g$ always holds in the zero-noise case. As this is a ``law of large numbers" type result, it is natural to expect that this result would also hold with the Brownian noise via some scaling properties of Brownian motion. In the proof of Theorem \ref{thm:stronglaw}, however, we derive the fluctuation results quite differently via some technical estimates and derive \eqref{equation:detlimit} as a consequence of these results.
\end{remark}

\medskip
We will use the following notation: $a\land b = \min(a,b)$ and $a\lor b = \max(a,b)$.

\section{Existence {and uniqueness} of the process}\label{sec:exist}

{In this section, we prove the existence and pathwise uniqueness of solutions to \eqref{eqnarray:eqmotion}. We also prove the well-posedness of the submartingale problem corresponding to the process. The latter fact will be an essential ingredient in proving the existence of the stationary distribution in Section \ref{sec:station}.}

\begin{remark}\label{oc18.1}
The following translation invariance property follows immediately from the 
form of equations \eqref{eqnarray:eqmotion}. Consider real numbers $x,s,v,l$ with $s \ge x$ and $l \ge 0$. If  $\{(X_t,S_t,V_t,L_t), t\geq 0\}$ solves 
\eqref{eqnarray:eqmotion} with the initial conditions
$(X_0,S_0,V_0,L_0)=(0,s-x,v,0)$ then $\{(\wh X_t,\wh S_t,\wh V_t, \wh L_t), t\geq 0\} := \{(x+X_t,x+S_t,V_t,\ell+L_t), t\geq 0\}$ is a solution to \eqref{eqnarray:eqmotion} with the initial conditions $(\wh X_0,\wh S_0,\wh V_0,\wh L_0)=(x,s,v,\ell)$. Because of this, we will always assume in our technical estimates that $B_0=X_0=L_0=0$, unless explicitly stated otherwise.
\end{remark}

The process given by $H_t=S_t-X_t$ will be called the gap process. If we know both $V$ and $H$, we can recover the movement of the individual particles by first integrating $V$ to obtain $S$, and then computing $X_t=S_t-H_t$. 

 Thus, existence and uniqueness  of a strong solution to the system \eqref{eqnarray:eqmotion} are equivalent to those of  the following system of equations:
\begin{align}\label{eq:z}
\begin{cases}
d V_t = dL_t - g dt,  \\
dH_t = - dB_t + V_t dt + d L_t,
\end{cases}
\end{align}
where $B_t$ is a standard one dimensional Brownian motion, $H_t \geq 0$ for all $t\geq 0$,  and $L_t$ is a continuous, non-decreasing process satisfying $dL_t = \Indicator_{\{0\}}(H_t)dL_t$. As before, we will write $Z_t=(V_t,H_t)$.

If $B_t$ and $Z_t$ are given, then $L_t$ can be computed from the equation $L_t = L_0+ V_t - V_0 + g t$. Thus, the complete description of the strong solution to \eqref{eq:z} can be given  in terms of only $Z$ and $B$.

Even though reflected diffusions in $\HH=\Reals\times\Reals_+$ are well studied and many classical results are available, we have not found a direct reference for existence and uniqueness of equations \eqref{eq:z}, mainly due to two technical issues: (i) $Z_t$ is not a strictly elliptic diffusion,
and (ii) the drift vector $(-g,V_t)^T$ is unbounded in the $v$-component. We will split our proof of existence and uniqueness into two lemmas: one that shows that a local solution exists, and another that extends local solutions to global ones. The second lemma will also be used to show that the submartingale problem is well posed.

In the following, we will write
$$
\L = \frac12\partial_{hh} +v\partial_h -g\partial_v
$$
for the second order differential operator associated to the generator of the Markov process $Z_t$ satisfying \eqref{eq:z} (provided it exists) in the interior of the upper half plane $\HH$.

\begin{lem}
\label{le:z_existence}
For each $N\geq 0$, there is a weak solution $Z^{N}_{t} = (V_{t}^N,H^{N}_{t})$   of \eqref{eq:z} up to time $T^{N}=\inf\kpar{t>0 : |Z^{N}_{t}| > N}$. Also, \eqref{eq:z} satisfies pathwise uniqueness up to time $T^N$.
\end{lem}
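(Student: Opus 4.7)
My plan is to reduce the system \eqref{eq:z} to a one-dimensional fixed-point problem for $V$ by exploiting the explicit one-dimensional Skorohod map, and then to run a Picard iteration. Pathwise uniqueness will drop out of the same Lipschitz estimate, and weak existence follows from the strong existence produced by the iteration.

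First I would recall the Skorohod problem on $[0,\infty)$: for every continuous $f:[0,\infty)\to\Reals$ with $f(0)\ge 0$ there is a unique pair $(h,\ell)$ of continuous functions with $h\ge 0$, $\ell$ nondecreasing, $\ell(0)=0$, $h=f+\ell$, and $\int_0^\infty h\,d\ell=0$; explicitly $\ell(t)=0\vee\sup_{s\le t}(-f(s))$, and the map $f\mapsto\ell$ is $1$-Lipschitz in sup norm on each $[0,t]$. Fixing $B$ and $(V_0,H_0)$, I would set $V^{(0)}_t:=V_0-gt$ and iterate: from $V^{(n)}$ form
\[
f^{(n)}_t:=H_0-B_t+\int_0^t V^{(n)}_s\,ds,
\]
apply the Skorohod map to obtain $(H^{(n)},L^{(n)})$, and then set $V^{(n+1)}_t:=V_0+L^{(n)}_t-gt$. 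Since $V^{(n+1)}_t-V^{(n)}_t=L^{(n)}_t-L^{(n-1)}_t$, the Skorohod Lipschitz bound together with $|f^{(n)}_t-f^{(n-1)}_t|\le\int_0^t|V^{(n)}_s-V^{(n-1)}_s|\,ds$ gives, by induction, $\sup_{s\le t}|V^{(n+1)}_s-V^{(n)}_s|\le C\,t^n/n!$ on any $[0,T]$. The sequence is therefore Cauchy uniformly on compacts; the limit $(V,H,L)$ is a strong (and a fortiori weak) solution of \eqref{eq:z}, and stopping at $T^N$ produces the desired $Z^N$.

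For pathwise uniqueness up to $T^N$, I would take two solutions $(V^i,H^i,L^i)$, $i=1,2$, driven by the same $B$ with the same initial data. Subtracting the first equation in \eqref{eq:z} gives $V^1_t-V^2_t=L^1_t-L^2_t$, and the Skorohod Lipschitz bound applied to the two $H$-equations yields
\[
|L^1_t-L^2_t|\le\int_0^t|V^1_u-V^2_u|\,du.
\]
Gronwall's inequality then forces $V^1\equiv V^2$, hence $L^1\equiv L^2$ and $H^1\equiv H^2$ on $[0,T^N]$.

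The main obstacle is that $V_t$ enters the $H$-equation as an a priori unbounded drift, so classical existence theorems for reflected SDEs with bounded Lipschitz coefficients cannot be invoked directly, and the generator $\L$ is degenerate in the $v$-direction so strict-ellipticity machinery is also unavailable. The Picard argument sidesteps this because $V$ feeds into $H$ only through its time integral, which remains bounded on any finite horizon; that is precisely what produces the factorial factor in the induction and makes the map a contraction on each $[0,T]$. In that sense the cut-off at $T^N$ is not strictly required for this lemma, but stating the result locally keeps the focus on well-posedness inside $\HH$ and is convenient for the next lemma, which will extend the solution globally and use the same construction to set up the submartingale problem.
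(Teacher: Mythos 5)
Your proposal is correct, but it takes a genuinely different route from the paper. The paper casts \eqref{eq:z} as a two-dimensional reflected SDE in $\HH$ (padding the $v$-component with an inert Brownian motion so the diffusion matrix has the form required by the cited result), truncates the unbounded drift $(-g, V_t)^T$ to $(-g, V_t \wedge N)^T$, and then invokes Theorem 1 of \cite{Wat71} for reflected SDEs with bounded Lipschitz coefficients; the cut-off at $T^N$ is essential there precisely because it is what makes the truncated equation coincide with the original, which is why the lemma is stated locally. Your approach stays one-dimensional: feed the $H$-equation into the explicit Skorohod map on $[0,\infty)$ and treat $V$ as the unknown of a fixed-point problem, using that $V$ enters only through its time integral so that the Lipschitz constant of the composite map on $[0,t]$ is of order $t$. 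This is more elementary and self-contained (no need to check that \cite{Wat71} really covers a degenerate diffusion with a constant oblique reflection field), it yields a strong solution directly, and --- as you observe --- it even produces a global solution without any localization, which would make the extension arguments in parts (b) and (c) of Lemma \ref{le:zz} largely redundant. One small imprecision in your write-up: the composite map on $[0,T]$ has Lipschitz constant $T$, so it is a contraction only when $T<1$; convergence of your Picard scheme rests on the factorial factor $T^n/n!$ produced by the induction rather than on a contraction per se. Your inductive estimate is the right argument, but the phrase ``makes the map a contraction on each $[0,T]$'' slightly overstates it.
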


\begin{proof}
Note that equation \eqref{eq:z} can be written as
\begin{align}
\label{eq:zz}
dZ_{t}^T &= 
\begin{pmatrix}
0 & 0 \\ 0 & -1
\end{pmatrix}
dB^{*}_{t} +
\begin{pmatrix}
-g \\ V_{t}
\end{pmatrix}
dt +
\begin{pmatrix}
1 \\ 1
\end{pmatrix}
dL_{t},
\end{align}
where $B^{*}_{t} = (W_{t},B_{t})^{T}$ and $W_{t}$ is a standard, one dimensional Brownian motion independent of $B$, that has no effect on the paths of $Z_{t}$, that is,  if we replace $W_t$ with another Brownian motion $W'_t$ then the same process $Z_t$ will solve \eqref{eq:zz}. We will now apply a standard localization technique. Note that if a weak solution $Z^{N}_t$ exists for a modified version of \eqref{eq:zz} with the drift vector replaced by $(-g,V_{t}\wedge N)^{T}$, then $Z^{N}_t$ will also be a weak solution to the original equation \eqref{eq:zz} up to time $T^{N}$. The drift and diffusion coefficients for the modified version of \eqref{eq:zz} are Lipschitz and bounded and thus, the equation fits into the setup of Theorem 1 in  \cite{Wat71}: the diffusion matrix $a=\sigma\sigma^T$ has component $a_{22}=1$, the drift vector is Lipschitz and bounded, and the reflection vector $\gamma:=(1,1)^T$ is constant with the unit length component in the direction of the normal to the boundary. Even though the statement of Theorem 1 in  \cite{Wat71} is about weak existence and weak uniqueness, its proof actually shows pathwise uniqueness. This implies the two claims made in the lemma.
\end{proof}

\begin{lem}
\label{le:zz}
For each $N\geq 1$, let $Z^N_t$ be a weak solution up to time $T^{N}=\inf\kpar{t>0 : |Z^{N}_{t}| > N}$ of \eqref{eq:z}, with $Z^N_0 = z$. Then
\begin{enumerate}[(a)]
\item There are constants $C_{1},C_{2}>0$, independent of $N$, such that $\EE\rpar{|Z^{N}_{t \wedge T^N}|^{2}}\leq C_{1}e^{C_{2}t}$.

\item All processes $Z^N_t$ can be chosen to be strong solutions of \eqref{eq:z}. They can be constructed so that $Z^N_t = Z^{N'}_t$ for $t\leq T^{N}$ if $N\leq N'$. Hence, they can be extended to a strong solution $Z_t$ up to time $T^* := \sup_{N} T^{N}$. Pathwise uniqueness holds for $Z_t$ up to time $T^*$.

\item $T^* =\infty$ a.s.
\end{enumerate}
\end{lem}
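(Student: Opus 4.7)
The plan is to prove (a), (b), (c) in that order. Part (a) is the heart of the argument; (b) is essentially bookkeeping on top of the pathwise uniqueness already supplied by Lemma~\ref{le:z_existence}, and (c) follows from (a) via Chebyshev.

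For (a), a direct \Ito\ expansion of $|Z^N_t|^2$ produces a local-time term $2V^N_t\,dL^N_t$ of indefinite sign (there is no reason for $V$ to vanish on $\{H=0\}$), so I would bypass \Ito\ and extract $L^N$ pathwise through the Skorokhod representation for the reflection in the $H$-equation. On $[0,T^N]$ the truncated equation agrees with the original, since $|V^N_s|\leq N$ forces $V^N_s\wedge N = V^N_s$; hence with $Y_t := H_0 - B_t + \int_0^t V^N_s\,ds$ we have $H^N_t = Y_t + L^N_t$ and $L^N_t = 0 \vee \sup_{s\leq t}(-Y_s)$, giving
\[
L^N_t \leq H_0 + \sup_{s\leq t}|B_s| + \int_0^t |V^N_s|\,ds.
\]
The truncation does not touch the $V$-drift, so $dV^N = dL^N - g\,dt$ and the identity $V^N_s = V_0 + L^N_s - gs$ yields $|V^N_s| \leq |V_0| + L^N_s + gs$. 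Substituting produces
\[
L^N_t \leq \psi(t) + \int_0^t L^N_s\,ds, \qquad \psi(t) := H_0 + \sup_{s\leq t}|B_s| + t|V_0| + \tfrac{1}{2}gt^2.
\]
Gr\"onwall applied pathwise on $[0,T^N]$ gives $L^N_{t\wedge T^N} \leq \psi(t)e^t$ (using monotonicity of $\psi$ and $e^{(\cdot)}$ for $t > T^N$). The elementary inequalities $(V^N)^2 \leq 3(V_0^2 + (L^N)^2 + g^2 t^2)$ and $(H^N)^2 \leq 4\bigl(H_0^2 + B_t^2 + t\int_0^t (V^N_s)^2\,ds + (L^N)^2\bigr)$, combined with Doob's bound $\EE[\sup_{s\leq t}|B_s|^2] \leq 4t$, then deliver $\EE[|Z^N_{t\wedge T^N}|^2] \leq C_1 e^{C_2 t}$ with constants independent of $N$.

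For (b), weak existence plus pathwise uniqueness (both from Lemma~\ref{le:z_existence}) give a strong solution of the truncated equation by Yamada--Watanabe. For $N \leq N'$, pathwise uniqueness applied on $[0,T^N]$ forces $Z^N \equiv Z^{N'}$ there, so setting $Z_t := Z^N_t$ for any $N$ with $t \leq T^N$ gives a well-defined strong solution on $[0,T^*)$ whose pathwise uniqueness is inherited from each stage. For (c), by definition of $T^N$ we have $|Z^N_{t\wedge T^N}| \geq N$ on $\{T^N \leq t\}$, so Chebyshev together with (a) yields $\PP(T^N \leq t) \leq C_1 e^{C_2 t}/N^2$; since $T^N$ is non-decreasing in $N$ the events $\{T^N \leq t\}$ are nested, hence $\PP(T^* \leq t) = \lim_N \PP(T^N \leq t) = 0$ for every $t$, so $T^* = \infty$ almost surely. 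The main technical obstacle is (a): the unbounded $V$-drift precludes any \Ito-based second-moment estimate uniform in the localization parameter, and it is the Skorokhod-plus-Gr\"onwall detour, rather than a Lyapunov function, that rescues the bound.
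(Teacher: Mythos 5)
Your argument is correct, but part (a) takes a genuinely different route from the paper's. You rightly observe that the naive quadratic $|z|^2=v^2+h^2$ fails: It\^o leaves the sign-indefinite boundary contribution $2V^N\,dL^N$, since $\nabla(v^2+h^2)^T\gamma = 2v+2h$ reduces to $2v$ on $\{h=0\}$. Your workaround is to extract $L^N$ pathwise via the Skorokhod map, substitute the identity $V^N_t = V_0 + L^N_t - gt$ to close a pathwise integral inequality for $L^N$, apply Gr\"onwall trajectory-by-trajectory, and only then take expectations using Doob's $L^2$ bound for $\sup_{s\le t}|B_s|$. That works, and is arguably more elementary since it avoids guessing a Lyapunov function. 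The paper, by contrast, does not abandon the It\^o approach: it replaces $|z|^2$ with the comparable quadratic $\eta(v,h)=2h^2+v^2-2hv$ (which satisfies $\tfrac13|z|^2\leq\eta\leq 3|z|^2$), for which $\nabla\eta^T\gamma = (2v-2h)+(4h-2v)=2h$ vanishes identically on $\{h=0\}$, so the local-time term drops out of It\^o's formula by construction; then $|\L\eta|\leq K_1+K_2|z|^2$ and Gr\"onwall in expectation finish part (a) in a few lines. Consequently your closing claim that ``it is the Skorokhod-plus-Gr\"onwall detour, rather than a Lyapunov function, that rescues the bound'' is not accurate: a Lyapunov function does work, provided one chooses a quadratic whose gradient is orthogonal to the reflection vector $\gamma=(1,1)^T$ along $\partial\HH$. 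Parts (b) and (c) of your proof coincide in substance with the paper's (Yamada--Watanabe plus pathwise-uniqueness consistency for (b); Chebyshev from (a) together with the monotonicity of $T^N$ in $N$ for (c)).
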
 

\begin{proof}

To prove (a), set $\eta(v,h)=2h^2+v^2-2hv$, and recall that $\L\eta = \frac12\partial_{hh}\eta +v\partial_h \eta -g\partial_v \eta$
and $\gamma = (1,1)^T$. It is elementary to check that $\frac13(h^2+v^2)\leq \eta(v,h)\leq3(h^2+v^2)$. Since $|Z^{N}_t|<N$ for $t < T^{N}$, we have by \Ito's formula
\begin{align*}
\frac13\EE\rpar{\left|Z^{N}_{t\wedge T^N}\right|^2}\leq \EE\eta(Z^{N}_{t\wedge T^N}) &= \eta(z) + \EE\int_0^{t\wedge T^N} \L\eta(Z_u^{N}) du 
+ \EE \int_0^{t\wedge T^N} \nabla\eta(Z^{N}_u)^T\gamma dL^{N}_u \\
&= \eta(z) +
\EE\int_0^{t\wedge T^N} \L\eta(Z_u^{N}) du ,
\end{align*}
since $ \nabla\eta(Z_u^{N})^T\gamma dL^{N}_u =2H^{N}_udL^{N}_u=0$. To bound the integral on the right hand side, we note that there exist constants $K_1,K_2>0$ such that $|\L\eta(z)|\leq K_1 + K_2|z|^2$. 
Putting all these inequalities together we obtain
\begin{align*}
\EE\rpar{\left|Z^{N}_{t\wedge T^N}\right|^2} &\leq 9 { |z|^2} + 3K_1t + 3K_2\int_0^t \EE\rpar{|Z_{u\wedge T^N}^N|^2} du.
\end{align*}
Now (a) follows from Gronwall's inequality.

Since pathwise uniqueness holds for \eqref{eq:z} up to time $T^{N}$, we can apply a well-known argument by Yamada and Watanabe (see  \cite[Ch.~IV, Thm.~11]{IkW89} or  \cite[Ch.~5, Corollary~3.23]{karatzas}) with minor modifications to show that $Z^N_{t}$ can be chosen as a strong solution up to time $T^N$.  By pathwise uniqueness, if $M>N$ we have that $Z^M_t = Z^N_t$ for $t < T^{N}$. 
This allows us to define $Z_t=Z^N_t$ for $t<T^{N}$ in a consistent way, and thus obtain a strong solution for $t<T^{*}=\sup_N T^{N}$. It is clear that pathwise uniqueness also holds  up to time $T^{*}$. This shows (b). 

It remains to show that $T^{*}=\infty$ almost surely. For any $\alpha>0$ and  $R>0$, by  (a), we have
$$
\PP\rpar{ T^* \leq\alpha} \leq \EE\rpar{\frac{\abs{Z^R_{\alpha}}^2}{R^2} \Indicator_{\kpar{T^{R} \leq\alpha}}} \leq \frac{C_1}{R^2} e^{C_2\alpha}.
$$
Taking $R\to\infty$ on the right hand side, we conclude that $\PP\rpar{ T^* \leq\alpha} =0$ for each $\alpha>0$, and thus $T^* = \infty$ a.s.
\end{proof}

As a direct consequence of the two previous lemmas, we are able to show existence and pathwise uniqueness for equation \eqref{eq:z}, which we record in the following theorem.

\begin{thm}
\label{thm:existence_sde}
The system of stochastic equations \eqref{eq:z} has a square integrable, strong solution $(V_t,H_t)$, and satisfies pathwise uniqueness.
\end{thm}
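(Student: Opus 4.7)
The proof is essentially a matter of assembling the three ingredients already prepared in Lemmas \ref{le:z_existence} and \ref{le:zz}, so I would present it as a short wrap-up rather than any new argument.

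First I would invoke Lemma \ref{le:z_existence} to obtain, for each $N\geq 1$, a weak solution $Z^N_t$ of \eqref{eq:z} up to the exit time $T^N = \inf\{t>0 : |Z^N_t|>N\}$, together with pathwise uniqueness up to $T^N$. Then Lemma \ref{le:zz}(b) upgrades each $Z^N$ to a strong solution and provides the compatibility $Z^N_t = Z^{N'}_t$ for $t \leq T^N$ whenever $N \leq N'$, so a consistent definition $Z_t := Z^N_t$ on $\{t < T^N\}$ produces a strong solution on $[0, T^*)$ with $T^* = \sup_N T^N$, on which pathwise uniqueness also holds. Lemma \ref{le:zz}(c) then gives $T^* = \infty$ a.s., which extends this to a global strong solution $(V_t, H_t)$ of \eqref{eq:z} satisfying pathwise uniqueness.

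It remains to verify square integrability of $Z_t$ on any bounded time interval. For each fixed $t \geq 0$, since $T^N \uparrow T^* = \infty$ almost surely, we have $Z^N_{t \wedge T^N} = Z_{t \wedge T^N} \to Z_t$ a.s. as $N \to \infty$. Applying Fatou's lemma to the uniform bound from Lemma \ref{le:zz}(a) yields
\begin{equation*}
\EE\bigl(|Z_t|^2\bigr) \leq \liminf_{N\to\infty}\EE\bigl(|Z^N_{t\wedge T^N}|^2\bigr) \leq C_1 e^{C_2 t},
\end{equation*}
so $Z_t$ is square integrable for every $t$, completing the proof.

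I do not expect any genuine obstacle here: all the hard analytic work, namely the reduction to a Lipschitz/bounded-drift reflected SDE via truncation, the Yamada–Watanabe upgrade, the Lyapunov estimate $\eta(v,h) = 2h^2 + v^2 - 2hv$ with $\gamma \cdot \nabla\eta \equiv 0$ on $\{h=0\}$, and the non-explosion argument via Markov's inequality, is already encapsulated in the two preceding lemmas. The only mild care needed is to note that square integrability of the global solution (as opposed to the truncated ones) requires passing to the limit $N\to\infty$ in the a.s. bound, which is immediate from Fatou's lemma together with $T^* = \infty$.
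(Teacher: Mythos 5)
Your proposal is correct and mirrors the paper's approach exactly: the paper itself presents Theorem \ref{thm:existence_sde} with no separate proof beyond the remark that it is a direct consequence of Lemmas \ref{le:z_existence} and \ref{le:zz}. Your Fatou's-lemma step to pass the moment bound from Lemma \ref{le:zz}(a) to the global solution is a sound (and welcome) explicit spelling-out of a detail the paper leaves implicit.
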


Our next theorem will be used in the derivation of the stationary distribution of $Z_t$. Recall that $\HH=\Reals\times\Reals_+$. We will denote by  $\C$  the set of continuous functions from $[0,\infty)$ to $\overline\HH$. We denote by $C^k_0(A)$  the set of compactly supported functions on $A$ with continuous derivatives up to order $k$, allowing $k=\infty$.  We will use $\B$ to denote the Borel $\sigma-$algebra of $\C$, and $\kpar{\F_t}$ will denote the natural filtration on $\C$.
Recall that $\gamma=(1,1)^T$ and  $\L f(v,h) = \frac12 \partial_{hh}f +v\partial_h f -g\partial_v f $.

\begin{thm}
\label{th:submg}
The submartingale problem for $(\L,\gamma)$ in $\HH$ is well-posed, that is, there is a unique family of measures $\kpar{\PP_z : z\in \overline{\HH}}$ on $\rpar{ \C,\B }$ such that, for each $z\in \overline{\HH}$, the following properties hold
\begin{enumerate}
\item $\PP_z(\omega(0) = z)=1$, and $\PP_z(\omega(t)\in\overline\HH)=1$.
\item For $t\geq 0$, and each $f\in C_0^2(\Reals^2)$ such that $\nabla f(y)^T\gamma\geq 0$ for all $y\in \partial \HH$, the process
\begin{align}
\label{eq:f_submg}
S_t[f] :=  f(\omega(t)) - \int_0^t \L f(\omega(u))du
\end{align}
is a submartingale in $\rpar{\PP_z,\C,\B,\kpar{\F_t}}$.
\end{enumerate}

Moreover, the unique solution to the submartingale problem corresponds to the law of the process $Z_t=(V_t,H_t)$ solving \eqref{eq:z}.
\end{thm}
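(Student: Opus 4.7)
The plan is to establish the theorem in two parts: (i) verify that the projection to canonical space of the strong solution $(V_t, H_t)$ from Theorem \ref{thm:existence_sde} yields a family $\kpar{\PP_z}$ solving the submartingale problem, and (ii) show any other solution must agree with it via identification of its semimartingale decomposition and an appeal to pathwise uniqueness.

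For (i), take $f\in C_0^2(\Reals^2)$ with $\nabla f(y)^T\gamma \geq 0$ on $\prt\HH$. Applying \Ito's formula to $f(V_t,H_t)$ using \eqref{eq:z} and the fact that $V$ has no martingale part while $\langle H \rangle_t = t$, I would obtain
\begin{equation*}
f(Z_t) = f(Z_0) + \int_0^t \L f(Z_u)\,du + \int_0^t \nabla f(Z_u)^T\gamma\, dL_u - \int_0^t \prt_h f(Z_u)\, dB_u.
\end{equation*}
The stochastic integral is a genuine martingale since $\prt_h f$ is bounded, and the $dL$-integral is non-decreasing because $L$ grows only on $\kpar{H_u = 0} \subset \prt\HH$, where the integrand is non-negative by assumption. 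Hence $S_t[f]$ is a $\PP_z$-submartingale, proving existence.

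For (ii), let $\PP_z$ be any solution of the submartingale problem. For $f$ with $\nabla f \cdot \gamma \equiv 0$ on $\prt\HH$, applying the submartingale hypothesis to both $\pm f$ shows that $S_t[f]$ is a $\PP_z$-martingale. Using a localization $T^N := \inf\kpar{t : |\omega(t)|>N}$ together with smooth compactly supported truncations of the coordinate functions, I would apply this to $f_1(v,h)=v-h$ (which satisfies $\nabla f_1 \cdot \gamma = 0$ and $\L f_1 = -v-g$) to identify one martingale relation, and apply the submartingale property to the truncation of $f_2(v,h)=h$ (with $\nabla f_2 \cdot \gamma = 1$, $\L f_2 = v$) to extract via Doob--Meyer a continuous non-decreasing process $L_t$ such that $\omega^h(t)-\int_0^t \omega^v(u)du-L_t$ is a martingale. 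Testing against $f$ supported in the open half-plane shows $\int_0^t \Indicator_{\kpar{\omega^h(u)>0}}dL_u = 0$, so $L$ grows only on $\prt\HH$. Defining
\begin{equation*}
M^v_t := \omega^v(t)-\omega^v(0)+gt-L_t, \qquad M^h_t := \omega^h(t)-\omega^h(0)-\int_0^t \omega^v(u)\,du - L_t,
\end{equation*}
and applying the martingale property to truncations of $v^2, h^2, vh$ yields that $M^v, M^h$ are continuous local martingales with $\langle M^v\rangle \equiv 0$, $\langle M^h\rangle_t = t$, and $\langle M^v,M^h\rangle \equiv 0$. Thus $M^v\equiv 0$ (recovering the equation for $V_t$), and $B_t := -M^h_t$ is a Brownian motion on a possibly enlarged space. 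The coordinate process therefore satisfies \eqref{eq:z} under $\PP_z$, and the pathwise uniqueness of Theorem \ref{thm:existence_sde} combined with Yamada--Watanabe forces $\PP_z$ to coincide with the law of $Z_t$ from \eqref{eq:z}.

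The main obstacle is the non-elliptic, unbounded-drift nature of $\L$, which rules out direct application of standard Stroock--Varadhan results for submartingale problems. I would handle this by the same localization strategy as in Lemma \ref{le:zz}: the submartingale property applied to smooth truncations of $\eta(v,h)=2h^2+v^2-2hv$ yields the moment bound $\EE_z|\omega(t\wedge T^N)|^2 \leq C_1 e^{C_2 t}$ uniformly in $N$, so that $T^N\to\infty$ $\PP_z$-a.s.\ and the semimartingale decomposition extends globally. The most delicate step is the extraction of $L_t$ and the verification of its boundary support property via bump-function approximations; once this is in place, the rest reduces to routine \Ito\ calculus and the already-established pathwise uniqueness of \eqref{eq:z}.
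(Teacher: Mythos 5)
Your proposal is correct in its overall strategy and lands in essentially the same place as the paper: both show that any solution of the submartingale problem is a weak solution of \eqref{eq:z} and then invoke pathwise uniqueness of Theorem \ref{thm:existence_sde} together with a Yamada--Watanabe argument. The difference is in how the local time and the quadratic-variation structure are extracted. The paper verifies that every $f\in C^2_0(\Reals^2)$ lies in the class $F$ of \cite{StV71} and then invokes Theorem 2.4 there to obtain, in one stroke, a single non-decreasing process $L^*$ making $M_t[f]$ a martingale for all such $f$; the quadratic variations then come out of the identity $\L f^2 = 2f\L f + |\partial_h f|^2$. You instead build $L$ by hand from a Doob--Meyer decomposition applied to a truncation of $f_2(v,h)=h$ and then try to read off the quadratic variations from truncations of $v^2$, $h^2$, $vh$. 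This buys some concreteness but incurs extra costs that you gloss over.

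Two points need attention. First, $v^2$ and $vh$ do \emph{not} have a sign-definite boundary normal derivative: $\nabla(v^2)^T\gamma = 2v$ and $\nabla(vh)^T\gamma|_{h=0}=v$, so their truncations are neither sub- nor supermartingale test functions and Doob--Meyer does not apply directly. This is exactly why the paper passes through the class $F$ (adding a multiple of the defining function $\phi(v,h)=\arctan h$ with $\nabla\phi^T\gamma=1$ on $\partial\HH$ and subtracting compensators); your computation of $\langle M^v\rangle$, $\langle M^h\rangle$, $\langle M^v,M^h\rangle$ would need the same device, or else you should work only with test functions like $(v-h)^2$, $\mu h+\lambda v$ whose boundary normal derivative has a definite sign. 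Second, the claim that the extracted compensator is carried by $\partial\HH$ is not a consequence of merely testing against $f$ supported in the open half-plane: you also need that the process spends zero Lebesgue time on $\partial\HH$ (the paper's Lemma \ref{le:indicator}, proved by the $q_n(h)=h^2e^{-nh}$ trick) to rule out an absolutely continuous piece hiding in the Doob--Meyer part. You do flag the boundary-support verification as the delicate step, which shows the right instinct, but as written these two gaps would need to be filled before the argument is complete.
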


Before proceeding to the proof of Theorem \ref{th:submg}, we will prove an important property about the amount of time the process associated to a solution to the submartingale problem spends on the boundary of the domain.

\begin{lem}
\label{le:indicator}
Let $Z_t^* = (V^*_t,H^*_t)$ be a solution of the submartingale problem for $(\L,\gamma)$ in $\HH$. Then
\begin{align*}
\int_0^t \Indicator_{\partial\HH}(Z_u^*)du =0 \qquad\text{a.s.}
\end{align*}
\end{lem}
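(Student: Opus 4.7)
My approach is to exploit the observation that when a test function $f\in C_0^2(\Reals^2)$ satisfies $\nabla f(v,0)^T\gamma=0$ identically on $\partial\HH$, both $f$ and $-f$ obey the boundary condition in Theorem \ref{th:submg}(2), so $S_t[f]$ is a genuine $\PP_z$-martingale rather than just a submartingale. Provided $\L f$ is bounded, one gets the clean identity $\EE_z f(Z^*_t) - f(z) = \EE_z \int_0^t \L f(Z^*_u)\,du$. I would apply this to a family $f_\eps$ whose $\L f_\eps$ approximates a bump concentrated on $\{h=0\}$ as $\eps\to 0$ and read the lemma off the limit.

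Concretely, fix $K>0$ and a non-negative cutoff $\chi\in C_0^\infty(\Reals)$ with $\chi\equiv 1$ on $[-K,K]$, and choose $\phi\in C_0^\infty(\Reals)$ satisfying $\phi\geq 0$, $\phi(0)=\phi'(0)=0$, and $\phi''(0)=1$ (for example, $\phi(h)=(h^2/2)\eta(h)$ for a standard cutoff $\eta$ equal to $1$ near the origin). Set $\phi_\eps(h):=\eps^2\phi(h/\eps)$ and
\begin{align*}
f_\eps(v,h) := -\phi_\eps(h)\chi(v).
\end{align*}
Then $\partial_h f_\eps(v,0)=-\eps\phi'(0)\chi(v)=0$ and $\partial_v f_\eps(v,0)=-\eps^2\phi(0)\chi'(v)=0$, so the boundary condition holds with equality and $S_t[f_\eps]$ is a martingale. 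A direct computation gives
\begin{align*}
\L f_\eps(v,h) = -\tfrac12\phi''(h/\eps)\chi(v) - \eps v\phi'(h/\eps)\chi(v) + g\eps^2\phi(h/\eps)\chi'(v),
\end{align*}
which is bounded uniformly in $(v,h)$ because $\chi$ has compact support, so $v\chi(v)$ is bounded.

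Inserting $f_\eps$ into the martingale identity, the left-hand side is $O(\eps^2)$ since $\|f_\eps\|_\infty\leq \eps^2\|\phi\|_\infty\|\chi\|_\infty$; the second and third terms of $\L f_\eps$ each contribute $O(\eps)$ after integrating up to time $t$ (again using compact support of $\chi$ and boundedness of $v\chi(v)$). Rearranging yields
\begin{align*}
\EE\int_0^t \phi''(H^*_u/\eps)\chi(V^*_u)\,du = O(\eps).
\end{align*}
Because $\phi''$ has compact support and $\phi''(0)=1$, the integrand converges pointwise to $\Indicator_{\{H^*_u=0\}}\chi(V^*_u)$ and is uniformly bounded by $\|\phi''\|_\infty\|\chi\|_\infty$. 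Dominated convergence gives $\EE\int_0^t \Indicator_{\{H^*_u=0\}}\chi(V^*_u)\,du=0$. Non-negativity of the integrand makes the integral vanish almost surely, and monotone convergence as $K\to\infty$ removes $\chi$ and produces the claim.

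The main (and really only) technical point is the design of $\phi_\eps$: the scaling $\eps^2\phi(\cdot/\eps)$ simultaneously forces the boundary condition to hold with equality, keeps $\L f_\eps$ uniformly bounded in $(v,h)$, and makes $\phi''(\cdot/\eps)$ approximate the indicator $\Indicator_{\{0\}}$ needed in the limit. Everything else reduces to a routine martingale bookkeeping and an application of dominated convergence.
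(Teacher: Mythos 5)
Your proof is correct and is essentially the paper's argument in different clothing. The paper takes $q_n(h)=h^2 e^{-nh}$ and a cutoff $\varphi_N$ in both variables; your $\phi_\eps(h)=\eps^2\phi(h/\eps)$ with $\phi$ compactly supported and $\phi(0)=\phi'(0)=0$, $\phi''(0)=1$ is exactly the same device (indeed $q_n = \eps^2\phi(h/\eps)$ with $\eps=1/n$ and $\phi(h)=h^2e^{-h}$), but your choice of compactly supported $\phi$ lets you cut off only in $v$, which tidies the bookkeeping. In both cases the crux is the same: the test function vanishes to second order at $h=0$, so $\nabla f\cdot\gamma=0$ on $\partial\HH$ and $S_t[f]$ is a genuine martingale; the scaling is arranged so that the $\partial_{hh}$ term concentrates on $\{h=0\}$ while $\|f_\eps\|_\infty$ and the lower-order terms of $\L f_\eps$ vanish; then dominated convergence gives $\EE\int_0^t\Indicator_{\{0\}}(H^*_u)\chi(V^*_u)\,du=0$ and monotone convergence removes the cutoff. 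No gaps.
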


\begin{proof} We will write $z=(v,h)$ to simplify the notation. 
For $n> 0$, define $q_n(h) = h^2\exp(-nh)$. Note that $q_n(h)=\partial_hq_n(h)=0$ on $\partial\HH$. For $N>0$, let $\varphi_N$ be a $C^2_0(\Reals^2)$ function satisfying: $0\leq\varphi_N(z)\leq 1$, $\varphi_N(z)=1$ for $\abs{z}\leq N$, and $\varphi_N$ has uniformly (in $N$) bounded derivatives up to the second order. Let $\tilde q_{n,N}(z) = q_n(h)\varphi_N(z)$. Since $ q_{n}(h)$ is bounded above by $4e^{-2}/n^2$, it is clear that 
\begin{align}\label{align:mart1}
\lim_{n\to\infty} \tilde q_{n,N}(z) = 0, \ \ \lim_{n\to\infty} \EE\rpar{\tilde q_{n,N}(Z^*_t)} = 0 \ \ \text{for any } t\geq 0.
\end{align}
We have,
\begin{align*}
\int_0^t 
\L \tilde q_{n,N}(Z^*_u) du 
&= \int_0^t \left(\varphi_N(Z^*_u)\L  q_{n}(H^*_u) +  q_{n}(H^*_t) \L \varphi_N(Z^*_u) + \partial_h  q_{n}(H^*_t)\partial_h\varphi_N(Z^*_u)\right)du.
\end{align*}
We will argue that the integrals of the second and third terms on the right hand side go to zero as $n\to \infty$, for every fixed $N$.
The claim holds for the second term because  $ q_{n}(h)$ is bounded above by $4e^{-2}/n^2$ and $\L\varphi_N(z)(h) $ is uniformly bounded.
The function $\partial_h q_n(h)$ converges to zero, and is uniformly bounded in $n$. The function $\partial_h\varphi_N(Z^*_u)$ is uniformly bounded. These observations prove the claim for the third term. 

We now turn our attention to the first term on the right hand side.
The function $\varphi_N(z)\L q_n(h) $ is uniformly bounded and converges to $\varphi_N(z)\Indicator_{\kpar{0}}(h)$ as $n$ goes to infinity. Therefore, applying the dominated convergence theorem, we see that
\begin{align}\label{align:mart2}
\lim_{n\to\infty} \EE\rpar{ \int_0^t \L \tilde q_{n,N}(Z^*_u) du } = \EE\rpar{ \int_0^t \varphi_N(Z^*_u) \Indicator_{\partial\HH}(Z^*_u) du}.
\end{align}
Since $\left(\nabla \tilde q_{n,N}(z)\right)^T\gamma = 0$ on $\partial\HH$ we have that $S_t[\tilde q_{n,N}] - S_0[\tilde q_{n,N}]$ is a martingale (this can be proved by considering the submartingale problem applied to $\tilde q_{n,N}$ and $-\tilde q_{n,N}$). Therefore,
$$
\EE\rpar{\tilde q_{n,N}(Z^*_t)} = \tilde q_{n,N}(z) + \EE\rpar{ \int_0^t \L \tilde q_{n,N}(Z^*_u) du }.
$$
Letting $n \rightarrow \infty$ in the above equation and using \eqref{align:mart1} and \eqref{align:mart2}, we obtain
$$
\EE\rpar{ \int_0^t \varphi_N(Z^*_u) \Indicator_{\partial\HH}(Z^*_u) du} =0.
$$
The lemma now follows from the monotone convergence theorem upon taking $N \rightarrow \infty$.
\end{proof}

\begin{proof}[Proof of Theorem \ref{th:submg}]
Using It\^o's formula, it is straightforward to check that solutions $Z$ to \eqref{eq:z}, for different initial values $Z_0=z$, constitute a family that solves the submartingale problem for $(\L,\gamma)$ in $\HH$. It only remains to prove the uniqueness in law of this solution. To this end, we will show that any solution $Z^*$ to the submartingale problem is a weak solution of \eqref{eq:z}.

Consider a solution $Z^*_t=(V^*_t,H^*_t)=\omega(t)$  to the submartingale problem \eqref{eq:f_submg} with $Z^*_0=z\in\overline\HH$. 
 We will use Theorem 2.4 of \cite{StV71}. That paper is concerned with processes whose diffusion coefficients $a_{ij}$ are strictly elliptic and drift coefficients $b_i$ are bounded (see page 147, and (i') and (ii') on page 159). Our diffusion coefficients are not elliptic and our drift coefficients are not bounded, so,  the results from the cited paper do not apply directly to our setting.  However, these assumptions are used neither in the definition of the class $F$ in \cite[page 161]{StV71}, nor in the statements and proofs of Lemmas 2.3, 2.4, and 2.5, and Theorem 2.4 in that paper. Also, the definition of the submartingale problem in \cite{StV71} is slightly different than ours, because, in their setting, the integral in \eqref{eq:f_submg} has the indicator function of $\HH$ as a factor in the integrand, which is not an issue in view of Lemma \ref{le:indicator} above. In order to use Theorem 2.4 of \cite{StV71}, we will show  that all functions $ f\in C_0^2(\Reals^2)$ belong to the class $F$, by modifying an argument from \cite{StV71}. 
 
We will briefly sketch the underlying idea of the proof. Theorem 2.4 of \cite{StV71} shows (without using ellipticity or boundedness of drift) that there is a unique, continuous, non-decreasing, non-anticipating ``local time" $L^*$, such that
$$
M_t[f] = f(Z^*_t) - \int_0^t \L f(Z^*_u)du - \int_0^t \left(\nabla f(Z^*_u)\right)^T\gamma\ dL^*_u
$$ 
is a martingale for each $f \in F$ (with $F$ taken as the class of functions defined in \cite[page 161]{StV71}). We will first show that every $f \in C_0^2(\Reals^2)$  belongs to the class $F$. Then, by appropriate choices of $f \in C_0^2(\Reals^2)$, it will be shown that any solution $Z^*$ to the submartingale problem is also a weak solution of \eqref{eq:z} with $L$ taken as $L^*$ determined by the class of functions $F$ as described above.
 
We will first show that (i) and (ii) in \cite[page 161]{StV71} are satisfied by any $f\in C_0^2( \Reals^2)$. To see this, fix an arbitrary $f\in C_0^2( \Reals^2)$.  First, suppose that $f$  has support in $\HH$, so that $(\nabla f)^T\gamma= 0$ on $\partial\HH$. Hence, $S_t[f]$ is a martingale. For general $f\in C_0^2( \Reals^2)$, let $\HH_n = \kpar{(v,h) : h>n^{-1}, |v|<n}$ and consider $\eta_n\in C_0^\infty( \Reals^2)$ such that $\eta_n =1$ in $\HH_n$ and $\eta_n=0$ outside of $\HH_{n+1}$. Define the ``smooth localization" $f_n \in C_0^2(\Reals^2)$ of $f$ as $f_n=f\eta_n$. For any stopping time $\tau$, set $\tau_n=\tau\wedge n$, and $\tau_n'= \inf\kpar{t\ge \tau_n: Z_t^*\notin\HH_n}\wedge n$. It follows that $S_{t\wedge\tau'_n}[f_n] - S_{t\wedge\tau_n}[f_n]$ is a martingale. But 
$$
S_{t\wedge\tau'_n}[f] - S_{t\wedge\tau_n}[f] = S_{t\wedge\tau'_n}[f_n] - S_{t\wedge\tau_n}[f_n].
$$
By taking $n\uparrow\infty$, we obtain that $S_t[f]$ is a local martingale in $\HH$, in the sense of \cite[page 158]{StV71}. Since $Kf =\L f$ is bounded for any $f\in C_0^2( \Reals^2)$, we see that (i) and (ii) in \cite[page 161]{StV71} are satisfied.

Now, we will show that (iii) and (iv) in \cite[page 161]{StV71} are satisfied by any $f\in C_0^2( \Reals^2)$. If $f\in C_0^2( \Reals^2)$ satisfies $(\nabla f)^T\gamma\geq 0$ on $\partial\HH$, then from the statement of the submartingale problem above, $S_t[f]$ is a locally bounded, continuous submartingale. By the Doob-Meyer decomposition theorem, it follows that there is an integrable, non-decreasing, non-anticipating continuous function $L_t[f]$ such that $L_0[f]=0$ and $S_t[f]-L_t[f]$ is a martingale. For general $f \in C_0^2( \Reals^2)$, consider $\phi(v,h)= \arctan(h)$, which is a defining function for $\HH$ in the sense of \cite{StV71} (see page 158),  set $\alpha=-\inf\kpar{(\nabla f(y))^T\gamma : y\in\partial\HH}$, and define $f_\alpha=f+\alpha\phi$. It is clear that $\left(\nabla f_\alpha\right)^T\gamma\geq 0$, and we can define $L_t[f] = L_t[f_\alpha] - \alpha L_t[\phi]$. We obtain that $L_t[f]$ is a non-anticipating continuous function of bounded variation such that $L_0[f]=0$, $\EE(|L_t[f]|) \leq   \EE(L_t[f_\alpha]) + \alpha \EE(L_t[\phi]) <\infty$, and $S_t[f] - L_t[f]$ is a martingale. This shows (iii) and (iv) in \cite[page 161]{StV71}. This proves that $f$ belongs to the class $F$ for any $f\in C_0^2( \Reals^2)$.

 Hence, we can apply \cite[Thm.~2.4]{StV71} to see that there exists a  unique, continuous, non-decreasing, non-anticipating process $t\mapsto L^*_t$, such that $L^*_0=0$, $\EE(L^*_t) <\infty$, $dL^*_t = \Indicator_{\partial\HH}(Z^*_t)dL^*_t$, and 
\begin{align}
\label{eq:f_mg}
M_t[f] = f(Z^*_t) - \int_0^t \L f(Z^*_u)du - \int_0^t \left(\nabla f(Z^*_u)\right)^T\gamma\ dL^*_u
\end{align}
is a martingale for each $f\in F$, and in particular, for $f\in C_0^2( \Reals^2)$.

Using that $\L f^2(z) = 2f(z)\L f(z) +  |\partial_h f(z)|^2$, we obtain
\begin{align*}
M_t[f^2] &= f(Z^*_t)^{2} -  \int_0^t |\partial_h f(Z^*_u)|^2 du - 2\int_0^t   f(Z^*_u)\L f(Z^*_u)du - 2\int_0^t f(Z^*_u)\left(\nabla f(Z^*_u)\right)^T\gamma\ dL^*_u.
\end{align*}%
Using \eqref{eq:f_mg} to compute $df(Z^*_t)$, we see that
\begin{align*}
\int_0^t f(Z^*_u) df(Z^*_u) &= \int_0^t f(Z^*_u) dM_u[f] + \int_0^t f(Z^*_u) \L f(Z^*_u) du + \int_0^t f(Z^*_u)\left(\nabla f(Z^*_u)\right)^T\gamma  dL^*_u.
\end{align*}
It follows that
\begin{align*}
f(Z^*_t)^2 - 2 \int_0^t f(Z^*_u) d f(Z^*_u) 
&= M_t[f^2] - 2\int_0^t f(Z^*_u) dM_u[f] +  \int_0^t |\partial_h f(Z^*_u)|^2 du .
\end{align*}
It\^o's formula shows that the left hand side in the equation above equals to $f(Z^*_0)^2 + \ppar{f(Z^*)}_t$, where the bracket $\ppar{\cdot}$ stands for quadratic variation. The right hand side has two continuous martingales plus a continuous  process of bounded variation. By uniqueness of the decomposition of continuous semimartingales, we conclude that
\begin{align}
\label{eq:qv}
\ppar{f(Z^*)}_t =   \int_0^t |\partial_h f(Z^*_u)|^2 du.
\end{align}
Since $f(Z^*_t)-M_t[f]$ is continuous with bounded variation, we see that $\ppar{M[f]}_t  = \ppar{f(Z^*)}_t$ is also given by \eqref{eq:qv}. This formula  suggests that there is a Brownian motion $B^*$ such that $dM_t[f] = \partial_h f(Z^*_t)dB^*_t$ for all $f\in C_0^2( \Reals^2)$. We proceed to prove this by localization.

\medskip

Let $T^N = \inf\kpar{t\geq 0 : |Z^*_t|>N}$, and $Z^N_t = Z^*_{t\wedge T^N}$. For $\mu,\lambda\in\Reals$,  let $f$ be a $C_0^2( \Reals^2)$ function such that $f(v,h)=\mu h+\lambda v$ for $\abs{(v,h)}<N$. From \eqref{eq:qv} we obtain $\ppar{\mu H^* + \lambda V^*}_{t\wedge T^N} =  \mu^2\cdot\rpar{t\wedge T^N}$. By Levy's characterization theorem, there is a Brownian motion $B^N_t$ (in a possibly enlarged probability space) such that $M_t[f] =  M_0[f] - \mu B^N_t$ for $t<T^N$. Unravelling our definitions and using \eqref{eq:f_mg} for $f$ at time $t\wedge T^N$ we obtain
\begin{align*}
\mu H^*_{t\wedge T^N} + \lambda V^*_{t\wedge T^N} = \mu H^*_0 + \lambda V^*_0 - \mu B^N_{t\wedge T^N} + \int_0^{t \wedge T^N} (\mu V^*_u -g\lambda) du + (\mu+\lambda) dL^*_{t\wedge T^N}.
\end{align*}
From this, it is direct to see that  for each $N\geq 0$, $Z^N_t$ is a weak solution to \eqref{eq:z} up to time $T^N$. It follows from Lemma \ref{le:zz}  that $Z^*$ is a  weak solution to \eqref{eq:z}. Since this equation satisfies pathwise uniqueness by Theorem \ref{thm:existence_sde}, it also satisfies uniqueness in law (\cite[Ch.~5, Proposition~3.20]{karatzas} with minor modifications for the reflected case), which shows that there is a unique solution to the submartingale problem.
\end{proof}

\section{Hitting time estimates}\label{sec:estim}

{In this section, we derive some preliminary estimates for hitting times of $V$ and $S-X$. These will be essential in most of the calculations leading to fluctuation results, strong laws and convergence to stationarity.}
 
We will use $\lfloor\, \cdot\, \rfloor$ to denote the greatest integer function.
We will write $C, C', C'', \dots$ for finite positive constants, whose values might change from line to line.

Recall that $H_t = S_t-X_t$.
Let
\begin{align*}
\vtau_{a}&=\inf\{t\ge 0: V_t=a\},\\
\stau_{a}&=\inf\{t\ge 0: H_t=a\},\\
\tau^{B,c}_a &= 
 \inf\{t\geq 0: B_t + c t = a\}, \\ 
\sigma(u)&=\inf\{t \ge u: S_t=X_t\},
\end{align*}
where $a,c \in \Reals$, $u \ge 0$,  with the convention that $\inf \emptyset = \infty$. 

\begin{remark}\label{oc16.1}
If $a>V_0$, then $S_{\vtau_a}=X_{\vtau_a}$, as otherwise, by path continuity of $S$ and $X$, there will exist a small time interval $[\vtau_a-\delta, \vtau_a]$ for some $\delta>0$ such that $S_u> X_u$ for all $u \in [\vtau_a-\delta, \vtau_a]$ and consequently, the velocity will be strictly decreasing in this interval, which is a contradiction to $\vtau_a$ being the first hitting time of level $a$ by the velocity process $V$. It is not necessarily true that $S_{\vtau_a}=X_{\vtau_a}$ for $a< V_0$.
\end{remark}

\begin{remark}\label{oc31.1}
For any initial values $V_0=v$, $X_0=x$ and $S_0=y$, $\sigma(0) < \infty$, a.s. To see this, note that on the event $\{\sigma(0)=\infty\}$, the trajectory of $S$ is a downward parabola  and the trajectory of $X$ is the trajectory of Brownian motion $B$ shifted by a constant, and staying forever under the parabola. This event has zero probability because $B_t/t\to 0$, a.s. 
\end{remark}

It is elementary to check that if $S_0\ge B_0 =X_0$, then the local time satisfies the usual Skorohod equation (see  \cite[Lem.~6.14, Ch.~3]{karatzas}),
\begin{align}\label{oc9.1}
L_t= 0\lor \sup_{u \le t}(B_u-S_u ), \qquad t\geq 0.
\end{align}


We will be frequently approximating the local time $L$ by using the local time of standard Brownian motion $B$ reflected, via the Skorohod equation, downward on a line of slope $a$ passing through the origin. We will use
the following notation,
\begin{align}\label{oc15.1}
L^{(a)}_t=\sup_{u \le t}(B_u - au).
\end{align}

We will use the following well known formulas (see 
\cite[Ch.~2, (9.20); Ch.~3, (5.12) and (5.13)]{karatzas}).
If $B_0=0$ then
\begin{align}
\label{oc13.3}
&\Prob\left(\sup_{s\leq t} B_s \geq x\right)
= 2 \int_x^\infty \frac 1 {\sqrt{2 \pi t}} e^{-u^2 / (2t)} du
\leq \frac {2\sqrt{t}} {\sqrt{2\pi}x} e^{- x^2/(2t)}, \quad t,x>0,\\
\label{oc13.1}
&\Prob\left(\tau^{B,m}_a \in dt\right) =
\frac {|a|}{\sqrt{ 2 \pi t^3}} \exp ( - (a- m t)^2/(2t))\, dt,
\quad t\geq 0,\\
&\Prob\left(\tau^{B,m}_a  < \infty\right) = \exp (ma - |ma|). \label{oc13.2}
\end{align}

The following two lemmas contain estimates for the hitting times of different levels by the velocity process $V_t$, for starting points in different ranges of values.
\begin{lem}\label{lem:hittimeest}
Assume that $H_0=0$. Then
for $0<a_1<a_2$, and $t\ge 2(a_2-a_1)/a_1$,
\begin{equation}\label{equation:hittime}
\Prob(\vtau_{-g-a_1}>t \mid V_0=-g-a_2) \le \frac{4(a_2-a_1)}{((a_2-a_1)+a_1t)a_1\sqrt{2\pi t}}e^{-a_1^2t/8}.
\end{equation}
\end{lem}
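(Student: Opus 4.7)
The plan is to reduce the event $\{\vtau_{-g-a_1} > t\}$ to a Brownian hitting-time event and then apply the explicit density \eqref{oc13.1}. By the translation invariance in Remark \ref{oc18.1}, I may assume $X_0 = B_0 = 0$, so $S_0 = H_0 = 0$.

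For the reduction, note that on $\{\vtau_{-g-a_1}>t\}$ continuity of $V$ forces $V_u < -g-a_1$ for all $u \in [0,t]$. Integrating gives $S_u \le -(g+a_1)u$, and the Skorokhod identity \eqref{oc9.1} yields
\[
L_u \;\ge\; \sup_{r\le u}(B_r - S_r) \;\ge\; \sup_{r\le u}\bigl(B_r + (g+a_1)r\bigr).
\]
From $V_u = -g-a_2 + L_u - gu$, the strict inequality $V_u < -g-a_1$ is equivalent to $L_u < (a_2-a_1)+gu$. Combining these two bounds on $L_u$ and evaluating the supremum at $r=u$ yields $B_u + a_1 u < a_2-a_1$ for every $u\le t$; that is,
\[
\{\vtau_{-g-a_1}>t\} \;\subset\; \{\tau^{B,a_1}_{a_2-a_1}>t\}.
\]

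Setting $b:=a_2-a_1$ and invoking \eqref{oc13.1}, the problem reduces to bounding
\[
\Prob\bigl(\tau^{B,a_1}_b > t\bigr) \;=\; \int_t^\infty \frac{b}{\sqrt{2\pi u^3}}\, \exp\!\left(-\frac{(a_1 u-b)^2}{2u}\right)du.
\]
The key elementary inequality is $(a_1 u-b)^2/(2u) \ge a_1^2 u/8$ for all $u\ge 2b/a_1$: both sides agree at $u = 2b/a_1$, and on $[2b/a_1,\infty)$ the derivative $(a_1 u - b)(a_1 u + b)/(2u^2)$ of the left side exceeds $a_1^2/8$. Under the hypothesis $t\ge 2b/a_1$ this replaces the Gaussian exponent throughout the domain of integration by $-a_1^2 u/8$.

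To recover the stated prefactor $\frac{4b}{(b+a_1 t)\,a_1\,\sqrt{2\pi t}}$, the last step would use the identity $\frac{1}{u^{3/2}} = \frac{a_1 + b/u}{(b+a_1 u)\sqrt{u}}$ together with the bound $a_1 + b/u \le 3a_1/2$ (valid for $u\ge 2b/a_1$) to rewrite the integrand in the form $\frac{Cb}{(b+a_1 u)\sqrt{u}}\,e^{-a_1^2 u/8}$ for an absolute constant $C$, and then integrate by parts with $U = b/((b+a_1 u)\sqrt{u})$ and $dV = e^{-a_1^2 u/8}\,du$. The boundary term at $u=t$ produces a multiple of the target expression, and the leftover integral has a favourable sign and is discarded. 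The main technical challenge is controlling the numerical constant sharply enough to arrive at the specific value $4$ rather than a somewhat larger constant that emerges from the most direct choices of these pointwise estimates.
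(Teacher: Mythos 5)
Your reduction of $\{\vtau_{-g-a_1}>t\}$ to $\{\tau^{B,a_1}_{a_2-a_1}>t\}$ is correct and matches the paper exactly (the paper also goes through $L_t \ge L^{(-g-a_1)}_t$ and the Skorokhod identity, though phrased slightly differently). Your elementary inequality $(a_1u-b)^2/(2u)\ge a_1^2u/8$ for $u\ge 2b/a_1$ is also verified correctly. The difference is in how the resulting integral is bounded, and here your method genuinely falls short of the stated constant $4$.

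The paper does \emph{not} replace the exponent pointwise. Instead it makes the exact substitution $z=\frac{b}{\sqrt u}-a_1\sqrt u$ (so that $z^2/2$ equals the Gaussian exponent exactly), which turns the integral into
\begin{align*}
(a_2-a_1)\int_{-\infty}^{z_0}\left(\tfrac12(a_2-a_1)+\tfrac12 a_1 u\right)^{-1}\frac{1}{\sqrt{2\pi}}e^{-z^2/2}\,dz, \qquad z_0 = \frac{b}{\sqrt t}-a_1\sqrt t.
\end{align*}
Only \emph{after} the substitution does it bound the Jacobian factor by $\frac{2}{b+a_1 t}$ (using $u\ge t$) and then apply the standard Gaussian tail estimate \eqref{oc13.3} at $z_0\le -a_1\sqrt t/2$. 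Keeping the exact Gaussian form until the end is what produces the sharp prefactor $\frac{4b}{(b+a_1 t)a_1\sqrt{2\pi t}}$: the $1/(b+a_1 t)$ comes from the Jacobian, and the $1/(a_1\sqrt t)$ comes from the Gaussian tail. By weakening the exponent first, you discard the quadratic growth and replace a Gaussian tail by an exponential tail; the factor $1/(a_1\sqrt t)$ (from $\phi(x)/x$) can no longer be recovered without consuming the $u^{-3/2}$ factor, which is exactly what forces the constant up. Carrying through your program with the bound $a_1+b/u\le 3a_1/2$ and the integration by parts you describe gives, after discarding the favourable-sign remainder, the constant $12$ in place of $4$; a $u^{-3/2}\le t^{-1/2}u^{-1}$-style bound is even worse because it loses the polynomial factor entirely.

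In short: the structure of your argument is sound, and it delivers the correct decay $e^{-a_1^2 t/8}$ and the correct $1/((b+a_1 t)\sqrt t)$ shape; this weaker version would in fact suffice for every downstream use of the lemma in the paper, where only the exponential rate matters. But as a proof of the lemma \emph{as stated}, it is incomplete, and you correctly identify this yourself. To obtain the constant $4$ you should replace the pointwise exponent bound by the paper's change of variables and finish with the Mills-ratio bound \eqref{oc13.3}.
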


\begin{proof}
 It follows easily from \eqref{oc9.1}-\eqref{oc15.1} that, assuming that $V_0=-g-a_2$ and
 $t<\vtau_{-g-a_1}$, we have $\displaystyle{L_t \ge L^{(-g-a_1)}_t=\sup_{u\le t}(B_u+(g+a_1)u)}$. 
We will use similar inequalities between $L_t$ and $L^{(m)}_t$ later in the paper a number of times, without explicitly referring to \eqref{oc9.1}-\eqref{oc15.1}.
We have,
\begin{eqnarray*}
\Prob(\vtau_{-g-a_1}>t \mid V_0=-g-a_2)&=&\Prob(L_s-gs \leq a_2-a_1 \text{ for } s\leq t)\\
&\le&\Prob\left(\sup_{u\le s}(B_u+(g+a_1)u)-gs \leq a_2-a_1 \text{ for } s\leq t\right)\\
&\le&\Prob((B_s+(g+a_1)s)-gs \leq a_2-a_1 \text{ for } s\leq t)\\
&=&\Prob(B_s+a_1s \leq a_2-a_1 \text{ for } s\leq t) \\
&=&\Prob\left( \tau^{B,a_1}_{a_2-a_1} > t \right).
\end{eqnarray*} 
This and  \eqref{oc13.1} imply that,
\begin{align*}
\Prob(\vtau_{-g-a_1}>t \mid V_0=-g-a_2)
& \leq
\int_t^{\infty}\frac{a_2-a_1}{\sqrt{2\pi u^3}}
\exp\left(-\frac{((a_2-a_1)-a_1u)^2}{2u}\right)du.
\end{align*}
Making a change of variable from $u$ to $z=\frac{a_2-a_1}{\sqrt{u}}-a_1\sqrt{u}$, and using the fact that $t\leq u$, we see that
\begin{align*}
&\Prob(\vtau_{-g-a_1}>t \mid V_0=-g-a_2)
 \leq
\int_t^{\infty}\frac{a_2-a_1}{\sqrt{2\pi u^3}}
\exp\left(-\frac{((a_2-a_1)-a_1u)^2}{2u}\right)du\\
 &=
(a_2-a_1) \int_{-\infty}^{\frac{a_2-a_1}{\sqrt{t}}-a_1\sqrt{t}}
u^{-3/2} \left( \frac 1 2 (a_2 - a_1) u^{-3/2} + \frac 1 2 a_1 u^{-1/2}\right) ^{-1}
\frac{1}{\sqrt{2\pi}}e^{-z^2/2}dz\\
 &=
(a_2-a_1) \int_{-\infty}^{\frac{a_2-a_1}{\sqrt{t}}-a_1\sqrt{t}}
 \left( \frac 1 2 (a_2 - a_1)  + \frac 1 2 a_1 u\right) ^{-1}
\frac{1}{\sqrt{2\pi}}e^{-z^2/2}dz\\
& \leq
\frac{2(a_2-a_1)}{(a_2-a_1)+a_1t}\int_{-\infty}^{\frac{a_2-a_1}{\sqrt{t}}-a_1\sqrt{t}}\frac{1}{\sqrt{2\pi}}e^{-z^2/2}dz.
\end{align*}
Note that, when $t\ge 2(a_2-a_1)/a_1$, $\frac{a_2-a_1}{\sqrt{t}} \le \frac{a_1\sqrt{t}}{2}$. Thus, the last estimate and \eqref{oc13.3} yield
\begin{align*}
&\Prob(\vtau_{-g-a_1}>t \mid V_0=-g-a_2)
 \leq
\frac{2(a_2-a_1)}{(a_2-a_1)+a_1t}\int_{-\infty}^{\frac{a_2-a_1}{\sqrt{t}}-a_1\sqrt{t}}\frac{1}{\sqrt{2\pi}}e^{-z^2/2}dz\\
& \leq
\frac{2(a_2-a_1)}{(a_2-a_1)+a_1t}\int_{-\infty}^{-a_1\sqrt{t}/2}\frac{1}{\sqrt{2\pi}}e^{-z^2/2}dz
\leq 
\frac{2(a_2-a_1)}{(a_2-a_1)+a_1t} \cdot
\frac{2}{\sqrt{2\pi}a_1\sqrt{t}}e^{-a_1^2t/8},
\end{align*}
which proves the lemma.
\end{proof}

\begin{lem}\label{lem:hitup}
Assume that $ H_0=0$. Then
for $0<a_1<a_2$,
\begin{itemize}
\item[(i)] If $a_1>g$, then for $t\ge 2(a_2-a_1)/g$,
\begin{equation}\label{equation:hitup}
\Prob(\vtau_{-g+a_1}>t \mid V_0=-g+a_2) \le e^{-g(a_1-g)t}.
\end{equation}
\item[(ii)] If $a_1 \le g$, then for $t\ge 2(a_2-a_1)/a_1$,
\begin{equation}\label{equation:hituptwo}
\Prob(\vtau_{-g+a_1}>t \mid V_0=-g+a_2) \le \frac{4}{a_1\sqrt{2\pi t}}e^{-a_1^2t/8}.
\end{equation}
\end{itemize}
\end{lem}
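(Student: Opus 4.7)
The plan is to translate the hitting-time event for $V$ into a condition on the driving Brownian motion $B$ via the Skorokhod identity \eqref{oc9.1}, and then invoke the explicit formulas \eqref{oc13.2} and \eqref{oc13.3}. Starting from $V_0=-g+a_2$ and using $V_s=V_0+L_s-gs$, the event $\{\vtau_{-g+a_1}>t\}$ coincides with $\{L_s>gs-(a_2-a_1)\text{ for all }s\le t\}$. Since $H_0=0$, Remark~\ref{oc18.1} lets me assume $X_0=B_0=S_0=0$. On $\{\vtau_{-g+a_1}>t\}$ one has $V_r>-g+a_1$ for every $r\le t$, hence $S_u=\int_0^u V_r\,dr\ge(-g+a_1)u$, and combining this with \eqref{oc9.1} gives
\[
L_s\le\sup_{u\le s}(B_u-S_u)\le\sup_{u\le s}\bigl(B_u+(g-a_1)u\bigr),\qquad s\le t.
\]
Thus the event is contained in $\{\sup_{u\le s}(B_u+(g-a_1)u)>gs-(a_2-a_1)\text{ for all }s\le t\}$.

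For part (i), where $a_1>g$, I weaken the containment to the single value $s=t$ and then drop the constraint $u\le t$ inside the supremum, yielding
\[
\Prob\bigl(\vtau_{-g+a_1}>t\bigm|V_0=-g+a_2\bigr)\le\Prob\bigl(\tau^{B,\,g-a_1}_{\,gt-(a_2-a_1)}<\infty\bigr).
\]
The hypothesis $t\ge 2(a_2-a_1)/g$ makes $gt-(a_2-a_1)>0$ (in fact $\ge gt/2$), so \eqref{oc13.2} evaluates the right side to $\exp(-2(a_1-g)(gt-(a_2-a_1)))$, which the same hypothesis bounds above by $\exp(-g(a_1-g)t)$, as claimed.

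For part (ii), where $a_1\le g$, the effective drift $g-a_1$ is non-negative and the supremum over $r\ge 0$ is a.s.\ infinite, so the argument of (i) collapses. Instead, I apply the containment only at $s=t$: some $u\le t$ satisfies $B_u+(g-a_1)u>gt-(a_2-a_1)$, and the sign $g-a_1\ge 0$ together with $u\le t$ converts this into $B_u>a_1 t-(a_2-a_1)$. The hypothesis $t\ge 2(a_2-a_1)/a_1$ gives $a_1 t-(a_2-a_1)\ge a_1 t/2>0$, so \eqref{oc13.3} applied to $\sup_{u\le t}B_u$ at the level $x=a_1 t-(a_2-a_1)$ immediately produces the announced bound $\tfrac{4}{a_1\sqrt{2\pi t}}\,e^{-a_1^2 t/8}$.

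The only real subtlety is this dichotomy: a negative effective drift ($a_1>g$) allows the closed-form hitting probability \eqref{oc13.2} over all of $[0,\infty)$ and delivers genuine exponential decay, whereas a non-negative effective drift ($a_1\le g$) forces one to absorb the drift term into the threshold and fall back on the Gaussian tail \eqref{oc13.3}, producing the weaker but still adequate rate $e^{-a_1^2 t/8}$. Verifying positivity and the $a_1 t/2$ lower bound on the threshold in each case is a routine consequence of the stated time hypotheses.
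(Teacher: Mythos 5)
Your proposal is correct and follows the same route as the paper: rewrite $\{\vtau_{-g+a_1}>t\}$ as a condition on $L_s-gs$, dominate $L$ by $L^{(-g+a_1)}$ on that event, specialize to $s=t$, and then invoke \eqref{oc13.2} (negative effective drift, case $a_1>g$) or \eqref{oc13.3} (non-negative effective drift, case $a_1\le g$). The bookkeeping with the time-hypotheses $t\ge 2(a_2-a_1)/g$ and $t\ge 2(a_2-a_1)/a_1$ matches the paper's as well.
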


\begin{proof}
For $t<\vtau_{-g+a_1}$,  the following inequality holds, $\displaystyle{L_t \le L^{(-g+a_1)}_t=\sup_{u\le t}(B_u-(a_1-g)u)}$. Therefore for $a_1>g$ and $t\ge 2(a_2-a_1)/g$, 
\begin{eqnarray*}
\Prob(\vtau_{-g+a_1}>t \mid V_0=-g+a_2)
&=&\Prob(L_s-gs \geq -(a_2-a_1) \text{ for } s\leq t)\\
&\le&\Prob\left(\sup_{u\le s}(B_u-(a_1-g)u)-gs \geq -(a_2-a_1) 
\text{ for } s\leq t\right)\\
&\le&\Prob\left(\sup_{u\le t}(B_u-(a_1-g)u)-gt \geq -(a_2-a_1) \right)\\
&\le&\Prob\left(\sup_{u < \infty}(B_u-(a_1-g)u) > gt-(a_2-a_1)\right) \\
&=& \Prob\left( \tau^{B,-(a_1-g)}_{gt-(a_2-a_1)} < \infty \right). 
\end{eqnarray*}
We use \eqref{oc13.2} and the assumption that $a_2-a_1 \le gt/2$
to conclude that
\begin{align*}
\Prob(\vtau_{-g+a_1}>t \mid V_0=-g+a_2)
&\leq
\exp( - 2 (a_1-g)(gt - (a_2 - a_1))
\leq e^{-g(a_1-g)t}.
\end{align*}
This proves $(i)$.

For $a_1 \le g$ and $t\ge 2(a_2-a_1)/a_1$,
\begin{align*}
\Prob(\vtau_{-g+a_1}>t \mid V_0=-g+a_2) 
&=\Prob(L_s-gs \geq -(a_2-a_1) \text{ for } s\leq t)\\
& \le \Prob\left(\sup_{u\le s}(B_u+(g-a_1)u)-gs \geq -(a_2-a_1) \text{ for } 
s\leq t\right)\\
&\le \Prob\left(\sup_{u\le t}B_u+(g-a_1)t-gt >-(a_2-a_1)\right)\\
& \le \Prob\left(\sup_{u \le t}B_u > a_1t/2\right).
\end{align*}
This and \eqref{oc13.3} show that
\begin{align*}
\Prob(\vtau_{-g+a_1}>t \mid V_0=-g+a_2) 
 \le \frac{4}{a_1\sqrt{2\pi t}}e^{-a_1^2t/8},
\end{align*}
which proves $(ii)$.
\end{proof}

The following lemma gives a uniform control over $\sigma(t)$ (the first time the Brownian particle and the inert particle meet after time $t$) over all times in a large interval.
\begin{lem}\label{lem:unifcontrol}
For every $\delta >0$ and $C_0$, we can find positive constants $C_1,C_2, a_0$ such that for all $a \ge a_0$ and all $v \in \left[-g-\delta^2 a^2/8, -g+\sqrt{g}\delta a/4\right]$, the following holds for $V_0=v$, $H_0=0$, and any $m \ge 1$:
\begin{equation*}
\Prob\left(\sigma(t) > t+3\frac{\delta a}{\sqrt{g}} \text{ for some } 
t \le C_0 a^m \wedge \vtau_{-g+\sqrt{g}\delta a/4}\wedge \vtau_{-g-\delta^2 a^2/8}\right) \le C_1a^{m-1} e^{-C_2a^3}.
\end{equation*}
\end{lem}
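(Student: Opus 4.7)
The strategy is to reduce the bad event to a statement about the driving Brownian motion $B$ and then estimate it by Gaussian tail bounds on a discrete grid. Set $\alpha = 3\delta a/\sqrt{g}$ and $T_0 := C_0 a^m \wedge \vtau_{-g+\sqrt{g}\delta a/4} \wedge \vtau_{-g-\delta^2 a^2/8}$. If $\sigma(t)>t+\alpha$ for some $t\le T_0$, put $s := \sup\{u\le t : H_u = 0\}$, so that $s\le T_0$, $H_s = 0$, and the excursion of $H$ starting at $s$ extends past $s+\alpha$. Since no local time accumulates during an excursion, $V_u = V_s - g(u-s)$ and $H_u = V_s(u-s) - \tfrac{g}{2}(u-s)^2 - (B_u - B_s)$ for $u\in[s,e]$. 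Evaluating at $u = s+\alpha$, using $H_{s+\alpha}>0$ together with $V_s \le -g+\sqrt{g}\delta a/4$ (valid because $s\le T_0\le \vtau_{-g+\sqrt{g}\delta a/4}$), a direct computation yields
\begin{equation*}
B_{s+\alpha} - B_s < V_s\alpha - \tfrac{g\alpha^2}{2} \le -3\sqrt{g}\,\delta a - \tfrac{15}{4}\delta^2 a^2 \le -3\delta^2 a^2.
\end{equation*}
Hence the bad event is contained in $A := \{\exists s\in[0, C_0 a^m]: B_{s+\alpha}-B_s<-3\delta^2 a^2\}$.

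To control $\Prob(A)$, discretize: let $s_k = k\alpha/2$ for $k=0,\dots,K$, with $K = \lceil 2(C_0 a^m+\alpha)/\alpha\rceil = O(a^{m-1})$, and put
\begin{equation*}
D_1 := \bigcap_{k=0}^K \{B_{s_k+\alpha} - B_{s_k} \ge -\tfrac{5}{2}\delta^2 a^2\}, \qquad D_2 := \Big\{\sup_{\substack{|u-v|\le \alpha/2 \\ u,v\in[0, s_K+\alpha]}}|B_u - B_v| \le \tfrac{\delta^2 a^2}{4}\Big\}.
\end{equation*}
On $D_1\cap D_2$, any candidate $s\in[0, C_0 a^m]$ realising $A$ has a nearest grid point $s_k$ with $|s-s_k|\le \alpha/2$, and two applications of the triangle inequality combined with $D_2$ give $|B_{s_k+\alpha}-B_{s_k} - (B_{s+\alpha}-B_s)|\le \delta^2 a^2/2$; therefore $A$ would force $B_{s_k+\alpha}-B_{s_k}<-\tfrac{5}{2}\delta^2 a^2$, contradicting $D_1$. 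Thus $A\subset D_1^c\cup D_2^c$.

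For the final estimates, $B_{s_k+\alpha}-B_{s_k}\sim N(0,\alpha)$ and $(\tfrac{5}{2}\delta^2 a^2)^2/(2\alpha)=\tfrac{25\sqrt{g}\delta^3}{24}a^3$, so \eqref{oc13.3} applied to each of the $K+1$ events in $D_1^c$ yields $\Prob(D_1^c)\le C' a^{m-1}e^{-c_1 a^3}$ with $c_1 = 25\sqrt{g}\delta^3/24$. For $\Prob(D_2^c)$, cover $[0, s_K+\alpha]$ by $O(a^{m-1})$ sub-intervals of length $\alpha/2$ and apply the reflection-principle form of \eqref{oc13.3} on each, obtaining $\Prob(D_2^c)\le C'' a^{m-1}e^{-c_2 a^3}$ for some $c_2>0$. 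Summing gives the lemma with $C_1,C_2,a_0$ depending only on $\delta, g, C_0$. The main obstacle is the reduction at the outset: once one sees that $H_{s+\alpha}>0$ pins $B_{s+\alpha}-B_s$ below the deterministic quantity $V_s\alpha - g\alpha^2/2$, which is uniformly $-\Theta(a^2)$ on the event in question, the event is automatically subject to an $e^{-\Theta(a^3)}$ Gaussian bound, and the grid discretization contributes only the prefactor $a^{m-1}$ matching $T_0/\alpha$.
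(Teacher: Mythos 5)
Your proof is correct, and it takes a genuinely different route from the paper's. The paper's argument first bounds the gap at the fixed time $\delta a/\sqrt g$: it uses $L_t-gt=V_t-V_0\le \sqrt g\delta a/4+\delta^2a^2/8$ (which requires the lower bound $v\ge -g-\delta^2a^2/8$) together with $S_t\le(-g+\sqrt g\delta a/4)t$ to show the gap is typically $\le\delta^2a^2$, and only then invokes the parabola structure of $S$ on $[\delta a/\sqrt g,3\delta a/\sqrt g]$ to force a big drop of $B$; it finally unions over the stopping times $\tgam_k$ (defined via $H_{\tgam_k}=0$), using the strong Markov property at each. You instead go directly to the start $s$ of the excursion straddling $t$, exploit that $dL=0$ on an excursion to write $H_u$ explicitly in terms of $V_s$, $B$, and the parabola, and read off the Brownian increment condition $B_{s+\alpha}-B_s<-3\delta^2a^2$ in one step; the union is then over a deterministic grid with an oscillation bound replacing the strong Markov step. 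What each buys: the paper's version keeps the analysis in terms of the stopping times $\tgam_k$ and a single reflection estimate, consonant with the techniques used throughout the paper; your version is more self-contained, avoids the intermediate gap-size bound, and in fact never uses the lower bound $v\ge -g-\delta^2a^2/8$ or the stopping time $\vtau_{-g-\delta^2a^2/8}$, so it proves a marginally stronger statement. A tiny technical remark on your $D_2$ step: with grid spacing $\alpha/2$, two times $u,v$ with $|u-v|\le\alpha/2$ may fall into adjacent sub-intervals, so you should either cover by overlapping intervals of length $\alpha$ (still $O(a^{m-1})$ of them) or insert an extra triangle-inequality factor; this only changes constants, not the $a^{m-1}e^{-\Theta(a^3)}$ rate, so the proof stands.
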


\begin{proof}
Fix any $\delta>0$ and $v \in [-g-\frac{\delta^2 a^2}{8}, -g+\frac{\sqrt{g}\delta a}{4}]$. Assume that $V_0=v$ and $H_0=0$.
 Note that for $t \le \vtau_{-g+\sqrt{g}\delta a/4}$,
\begin{eqnarray*}
L_t-gt &\le& \frac{\delta^2 a^2}{8}+ \frac{\sqrt{g}\delta a}{4},\\
S_t &\le& \left(-g+\frac{\sqrt{g}\delta a}{4}\right)t.
\end{eqnarray*}
These together yield 
\begin{align*}
S_t-X_t 
= S_t -B_t +L_t
\le -B_t +\frac{\sqrt{g}\delta a}{4}t +\frac{\delta^2 a^2}{8}+\frac{\sqrt{g}\delta a}{4}.
\end{align*}
Thus we have
\begin{align}\label{align:diffboundone}
\Prob\left(S_{\delta a/\sqrt{g}}-X_{\delta a/\sqrt{g}} > \delta^2a^2,\, \frac{\delta a}{\sqrt{g}} \le \vtau_{-g+\sqrt{g}\delta a/4} \right)
\nonumber
 &\le \Prob\left(-B_{\delta a/\sqrt{g}}+ \frac{3\delta^2a^2}{8}+\frac{\sqrt{g}\delta a}{4} > \delta^2a^2\right)\\
&\leq
\Prob\left(-B_{\delta a/\sqrt{g}} > \frac 9 {16} \delta^2a^2\right)
\le e^{-\sqrt{g}\delta^3a^3/8}.
\end{align}
The second inequality holds for large enough $a$ (depending on $\delta$) and the last inequality follows from \eqref{oc13.3}. 

Suppose that the following event holds
\begin{align}\label{oc15.2}
\left\{\sigma\left(\frac{\delta a}{\sqrt{g}}\right) > 3\frac{\delta a}{\sqrt{g}}, 
S_{\delta a/\sqrt{g}}-X_{\delta a/\sqrt{g}} \le \delta^2a^2, 
\frac{\delta a}{\sqrt{g}} \le \vtau_{-g+\sqrt{g}\delta a/4}
\right\}.
\end{align}
Then $S$ is a parabola on the interval 
$\left[\frac{\delta a}{\sqrt{g}},  \frac{3\delta a}{\sqrt{g}}\right]$. 
If $w = V_{\delta a/\sqrt{g}}$ then the parabola increment  over this interval is
\begin{align*}
- \frac 1 2 g \left(\frac{2\delta a}{\sqrt{g}}\right)^2 + w \frac{\delta a}{\sqrt{g}}
\leq
- \frac 1 2 g \left(\frac{2\delta a}{\sqrt{g}}\right)^2 
+ \left(-g+\frac{\sqrt{g}\delta a}{4}\right) \frac{\delta a}{\sqrt{g}}
\leq 
- \frac 3 2 \delta^2 a^2.
\end{align*}
Since $S_{\delta a/\sqrt{g}}-X_{\delta a/\sqrt{g}} \le \delta^2a^2$ and
$X$ stays below $S$ on the interval $\left[\frac{\delta a}{\sqrt{g}},  \frac{3\delta a}{\sqrt{g}}\right]$, the following event must hold,
\begin{align*}
\left\{B_{3\delta a/\sqrt{g}} - B_{\delta a/\sqrt{g}}\le -\delta^2a^2/2\right\}.
\end{align*}
Recalling \eqref{oc15.2}, we conclude that
\begin{align*}
\Prob\left(\sigma\left(\frac{\delta a}{\sqrt{g}}\right) > 3\frac{\delta a}{\sqrt{g}}, 
S_{\delta a/\sqrt{g}}-X_{\delta a/\sqrt{g}} \le \delta^2a^2, 
\frac{\delta a}{\sqrt{g}} \le \vtau_{-g+\sqrt{g}\delta a/4}
\right)
\leq
\Prob\left(B_{\frac{3\delta a}{\sqrt{g}}} - B_{\frac{\delta a}{\sqrt{g}}}\le -\delta^2a^2/2\right).
\end{align*}
This, \eqref{align:diffboundone} and \eqref{oc13.3} yield for large $a$,
\begin{align}\label{align:diffboundtwo}
&\Prob\Big(\sigma\left(\frac{\delta a}{\sqrt{g}}\right)> 3\frac{\delta a}{\sqrt{g}}, \ \frac{\delta a}{\sqrt{g}} \le \vtau_{-g+\sqrt{g}\delta a/4}
\Big)\nonumber\\
& \le \Prob\left(S_{\delta a/\sqrt{g}}-X_{\delta a/\sqrt{g}} > \delta^2a^2, \frac{\delta a}{\sqrt{g}} \le \vtau_{-g+\sqrt{g}\delta a/4}
\right)\nonumber\\
&\quad + \Prob\left(\sigma\left(\frac{\delta a}{\sqrt{g}}\right) > 3\frac{\delta a}{\sqrt{g}}, S_{\delta a/\sqrt{g}}-X_{\delta a/\sqrt{g}} \le \delta^2a^2, \frac{\delta a}{\sqrt{g}} \le \vtau_{-g+\sqrt{g}\delta a/4}\right)\nonumber\\
& \le e^{-\sqrt{g}\delta^3a^3/8} + \Prob\left(B_{3\delta a/\sqrt{g}} 
- B_{\delta a/\sqrt{g}}\le -\delta^2a^2/2\right)\nonumber\\
& \le  e^{-\sqrt{g}\delta^3a^3/8} + e^{-\sqrt{g}\delta^3a^3/16}.
\end{align}

Define  stopping times $\tgam_0=0$ and 
$$\tgam_{k+1}=\inf\{t \ge \tgam_k+ \delta a/\sqrt{g}: H_t=0\},$$ 
 for $k \ge 0$. Then by \eqref{align:diffboundtwo}, the strong Markov property applied at $\tgam_k$, and Remark \ref{oc18.1}, for large $a$,
\begin{align}\label{align:diffboundthree}
&\Prob\left(\tgam_{k+1}-\tgam_k  > 3\frac{\delta a}{\sqrt{g}},
 \ \vtau_{-g+\sqrt{g}\delta a/4} \ge  \tgam_k+ \delta a/\sqrt{g},
 \ \vtau_{-g-\delta^2 a^2/8} \ge \tgam_k\right)\nonumber\\
&\leq \Prob\left(\tgam_{k+1}-\tgam_k  > 3\frac{\delta a}{\sqrt{g}}, \ \vtau_{-g+\sqrt{g}\delta a/4} \ge \tgam_k+ \delta a/\sqrt{g}\right)\nonumber\\
& \le e^{-\sqrt{g}\delta^3a^3/8} + e^{-\sqrt{g}\delta^3a^3/16}.
\end{align}

Consider any $m \ge 1$. Suppose that there is $t_1 \in \left[0,C_0 a^m \wedge \vtau_{-g+\sqrt{g}\delta a/4} \wedge \vtau_{-g-\delta^2 a^2/8}\right]$ such that $\sigma(t_1) > t_1+3\frac{\delta a}{\sqrt{g}}$. Then $t_1 \in \left[0,C_0 a^m\right]$ and, therefore,  we can find $0 \le k_1 \le 
C_0 \sqrt{g}a^{m-1}/\delta$ with $\tgam_{k_1} \le t_1 \le \tgam_{k_1+1}$, because $\tgam_{k+1}- \tgam_k\geq \delta a/\sqrt{g}$ for all $k$. 

It follows from the definition of $\tgam_{k+1}$ that $\sigma(t_1)-t_1 \le \tgam_{k_1+1}-\tgam_{k_1}$. 
Since $V_0 \leq -g+\frac{\sqrt{g}\delta a}{4}$, the processes $S$ and $X$ must take the same value at the time  $\vtau_{-g+\sqrt{g}\delta a/4}$, by Remark \ref{oc16.1}. Hence, if 
$\vtau_{-g+\sqrt{g}\delta a/4} \in [t_1, \tgam_{k_1}+\delta a/\sqrt{g}]$, then
$$
\sigma(t_1) \le \vtau_{-g+\sqrt{g}\delta a/4}  \le \tgam_{k_1} +\delta a/\sqrt{g} \leq t_1 + \delta a/\sqrt{g},
$$
which contradicts the assumption that $\sigma(t_1) > t_1+3\frac{\delta a}{\sqrt{g}}$. 
Thus, if $t_1 \leq \vtau_{-g+\sqrt{g}\delta a/4}$ 
and $\sigma(t_1) > t_1+3\frac{\delta a}{\sqrt{g}}$ then
$\vtau_{-g+\sqrt{g}\delta a/4} \geq \tgam_{k_1} +\delta a/\sqrt{g}$.
These observations and \eqref{align:diffboundthree} imply that
\begin{align*}
&\Prob\left(\sigma(t)  > t+3\frac{\delta a}{\sqrt{g}} \text{ for some } t \le a^m \wedge \vtau_{-g+\sqrt{g}\delta a/4} \wedge \vtau_{-g-\delta^2 a^2 /8}\right)\nonumber\\
&\le \sum_{k=0}^{\left\lfloor C_0\sqrt{g}a^{m-1}/\delta\right\rfloor}\Prob\left(\sigma(t) > t+3\frac{\delta a}{\sqrt{g}} , 
t \le  \vtau_{-g+\sqrt{g}\delta a/4} \wedge \vtau_{-g-\delta^2 a^2/8}
\text{ for some } t \in [\tgam_k, \tgam_{k+1}]
\right)\\
&\le \sum_{k=0}^{\left\lfloor C_0\sqrt{g}a^{m-1}/\delta\right\rfloor}\Prob\left(\tgam_{k+1}-\tgam_k > 3\frac{\delta a}{\sqrt{g}}, 
\ \vtau_{-g+\sqrt{g}\delta a/4} \ge 
\tgam_k +\delta a/\sqrt{g},
 \ \vtau_{-g-\delta^2 a^2/8} \ge \tgam_k\right)\\
& \le \left(\left\lfloor C_0\sqrt{g}a^{m-1}/\delta\right\rfloor +1\right)\left(e^{-\sqrt{g}\delta^3a^3/8} + e^{-\sqrt{g}\delta^3a^3 /16}\right).
\end{align*}
 This proves the lemma.
\end{proof}

The following lemma tells us that the probability of the velocity staying inside the interval $[-g-a,-g+a]$ for all times up to $t$ decays exponentially with $t$.

\begin{lem}\label{lem:tube}
For any $a>0$, there exists $p_0 \in (0,1)$ depending on $a$ such that for any integer $m \ge 1$,
\begin{equation}\label{equation:tube}
\sup_{v \in [-g-a,-g+a]}\Prob\left(|V_t+g| \le a \text{ for all } t \in \left[0,m\left(1+2a/g\right)\right]\mid V_0=v, H_0=0\right) \le p_0^m.
\end{equation}
\end{lem}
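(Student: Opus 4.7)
My plan is to reduce the $m$-step bound to a one-step bound via the Markov property of $(V,H)$, and then establish the one-step bound by a dichotomy on the size of $H_0$. With $T:=1+2a/g$, I would first prove that there exists $p_0=p_0(a,g)\in(0,1)$ such that for every $v\in[-g-a,-g+a]$ and every $h\ge 0$,
\begin{equation*}
\Prob\rpar{V_s\in[-g-a,-g+a]\ \text{for all}\ s\in[0,T] \,\bigm|\, V_0=v,\,H_0=h}\le p_0.
\end{equation*}
Given this, the lemma follows by iterating at times $kT$: on the event that $V$ has remained in the tube through time $kT$ one has $V_{kT}\in[-g-a,-g+a]$ and $H_{kT}\ge 0$, so by Theorem~\ref{th:submg} (the Markov property of $Z$) the conditional probability of staying in the tube throughout $[kT,(k+1)T]$ is at most $p_0$; the product gives $p_0^m$. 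Throughout, by Remark~\ref{oc18.1}, I would take $B_0=X_0=L_0=0$ and $S_0=h$.

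For the one-step bound, I would fix a threshold $h_0=h_0(a,g)>0$ (specified below) and split on whether $h\le h_0$ or $h>h_0$. In either case, on the tube event the identity $V_t=v+L_t-gt$ together with $S_t=h+\int_0^tV_u\,du$ furnishes the deterministic bounds $L_t\le 2a+gt$ and $S_t\le h+(a-g)t$ for $t\le T$, while the Skorohod formula \eqref{oc9.1} yields $L_t\ge B_t-S_t$. For $h\le h_0$ I would use the upward-spike event $G_A:=\{\sup_{t\le T}B_t\ge M_A\}$ with $M_A:=3a+2a^2/g+h_0+1$. If the tube event holds up through the spike time $t^*\in[0,T]$, then combining these bounds gives
\begin{equation*}
M_A\le B_{t^*}\le L_{t^*}+S_{t^*}\le (2a+gt^*)+(h+(a-g)t^*)=2a+at^*+h\le 2a+aT+h_0=3a+2a^2/g+h_0,
\end{equation*}
contradicting the choice of $M_A$. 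Hence the tube event on $[0,T]$ is disjoint from $G_A$, so $\Prob(\text{tube})\le 1-\Prob(G_A)$, with $\Prob(G_A)>0$ a constant depending only on $a$ and $g$.

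For $h>h_0$ I would instead use a bounded-$B$ event: pick $K=K(a,g)$ so that $\Prob(\sup_{t\le 2a/g}B_t\le K)\ge 1/2$, and then set $h_0:=2a+4a^2/g+K+1$. Before the first meeting time $\sigma(0)$ the local time is frozen at $0$, so $V_t=v-gt$ and $H_t=h+vt-gt^2/2-B_t\ge h-(g+a)t-gt^2/2-K$ on $\{\sup_{t\le 2a/g}B_t\le K\}$; the right-hand side is strictly positive on $[0,2a/g]$ by the choice of $h_0$, so $\sigma(0)>2a/g$ on this event. But then $V_t=v-gt$ reaches $-g-a$ at $t_*:=(v+g+a)/g\le 2a/g$ and continues strictly below immediately thereafter, so the tube event fails. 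Therefore $\Prob(\text{tube})\le 1/2$. Taking $p_0:=\max\{1-\Prob(G_A),\,1/2\}<1$ completes the one-step bound. The main (mild) difficulty is handling both regimes of $h$ uniformly: the spike argument would require $M_A$ to grow with $h$ and so fails for arbitrarily large $h$, whereas the bounded-$B$ argument requires $h$ above a threshold depending on $K$; the dichotomy at $h_0$ bridges these two regimes.
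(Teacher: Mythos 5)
Your proof is correct, but it takes a genuinely different route from the paper's. The paper establishes the one-step bound only for $H_0=0$ (using the same Skorokhod-based inequality $B_t \le L_t + S_t$ that you use in your small-$h$ case) and then iterates by applying the strong Markov property at the random meeting time $\sigma(1)$, at which $H$ is automatically back to $0$; the observation that tube-confinement on $[0,1+3a/g]$ forces $\sigma(1)\le 1+2a/g$ is what makes this random-time iteration close up. You instead prove a one-step bound that is uniform over all $h\ge 0$ and iterate at the deterministic times $kT$, which buys a cleaner renewal step but costs you the dichotomy on $h$: your small-$h$ argument is essentially the paper's $h=0$ computation (with the slack absorbed into $M_A$), while your large-$h$ argument — using the Skorokhod identity to show that $\sigma(0)>2a/g$ on a bounded-$B$ event, so $V$ falls linearly through $-g-a$ before any reflection can occur — is a new ingredient the paper avoids by always iterating from the boundary. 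Both routes are sound; yours is arguably more robust (the one-step estimate holds from any interior starting configuration, not just $h=0$), while the paper's is shorter because it only ever needs the boundary case. One minor presentational point: state the choice of $K$ and $h_0$ before introducing $M_A$, since as written $M_A$ refers to $h_0$ before $h_0$ is defined.
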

\begin{proof}
Assume without loss of generality that $B_0=0$.
Note that $L_t=\sup_{u\le t}(B_u-S_u) \ge B_t-S_t$. If $V_u \in [-g-a,-g+a]$ for all $0\le u \le t$, then $S_t \le (-g+a)t$. This gives $L_t \ge B_t + (g-a)t$. Thus,
$$V_t=V_0 +L_t-gt \ge -g-a +B_t +(g-a)t -gt = B_t -at-g-a,$$
and, therefore,
\begin{align*}
\{|V_t+g|  \le a \text{ for all } t \in [0,1]\}
\subset
\{V_1 + g \leq a\}
\subset
\{B_1 - 2 a \leq a\}
=\{B_1 \leq 3a\}.
\end{align*}
Let $p_0 = \Prob(B_1 \leq 3a ) < 1$.
Then,
\begin{align*}
\sup_{v \in [-g-a,-g+a]}\Prob(|V_t+g| & \le a \text{ for all } t \in [0,1]\mid V_0=v, H_0=0)\\
&\le \Prob(B_1\le 3a)=p_0 .
\end{align*}
If $V_u \in [-g-a,-g+a]$ for all $0\le u \le 1+3a/g$, then $\sigma(1) \le  1+2a/g$, 
for otherwise there would be $t \in [1+2a/g, 1+3a/g)$ with $V_t<-g-a$. Thus, applying the strong Markov property at $\sigma(1)$, in view of Remark \ref{oc18.1}, we have for any $m \ge 2$,
\begin{align*}
&\sup_{v \in [-g-a,-g+a]}\Prob\left(|V_t+g| \le a \text{ for all } t \in \left[0,m\left(1+2a/g\right)\right]\mid V_0=v, H_0=0\right)\\
&\quad \le \sup_{v \in [-g-a,-g+a]}\Prob(|V_t+g| \le a \text{ for all } t \in [0,1]\mid V_0=v, H_0=0)\\
& \quad \quad \ \ \ \times \sup_{v \in [-g-a,-g+a]}\Prob\left(|V_t+g| \le a \text{ for all } t \in \left[0,(m-1)\left(1+2a/g\right)\right]\mid V_0=v, H_0=0\right)\\
& \quad \le p_0 \times \sup_{v \in [-g-a,-g+a]}\Prob\left(|V_t+g| \le a \text{ for all } t \in \left[0,(m-1)\left(1+2a/g\right)\right]\mid V_0=v, H_0=0\right).
\end{align*}
Recursively applying the same argument, we get \eqref{equation:tube}.
\end{proof}

\section{Fluctuations {for excursions of} $V$ and $S-X$ }\label{sec:fluct}

For large positive values of $V$, the inert particle gets pulled down by gravitation which can be viewed as a \textit{soft potential}. For large negative $V$, the inert particle gets pushed up by \textit{hard reflection} forces (`hard' because the support of the intersection local time has Lebesgue measure zero). These two kinds of forces are very different in nature and it is natural to expect that the effect each exerts on the inert particle is also different. This is formalized in Theorems \ref{thm:fluconeside} and \ref{thm:gaponeside} below. In Theorem \ref{thm:fluconeside}, we prove that the distribution of excursions of the velocity process $V$ above the level $-g$ has a different tail behavior compared to that of excursions below $-g$. In Theorem \ref{thm:gaponeside}, we prove that the probability of having a large gap $S-X$ during an excursion of $V$ above $-g$ behaves differently than that of having a large gap during an excursion of $V$ below $-g$.

These theorems, besides being of independent interest, will be essential in deriving fluctuation results for $V$ and $S-X$ between certain renewal times defined in Section \ref{sec:renewal} {
and, consequently, in proving Theorems \ref{thm:velfluc} and  \ref{thm:stronglaw}.}

\begin{thm}\label{thm:fluconeside}
There exist positive constants $a_0$ and $C^+_i,C^-_i$ for $i=1,2,3,4$ such that for any $a\ge a_0$:
\begin{equation}\label{equation:lowerfluc}
C^-_1e^{-C^-_2a^3} \le \Prob(\vtau_{-g-a} < \vtau_{-g-1} \mid V_0=v, H_0=0) \le C^-_3e^{-C^-_4a^3}
\end{equation}
uniformly over all $v \in [-g-a/2, -g-2]$,
and
\begin{equation}\label{equation:upperfluc}
C^+_1e^{-C^+_2a^2} \le \Prob(\vtau_{-g+a} < \vtau_{-g+1} \mid V_0=v, H_0=0) \le C^+_3e^{-C^+_4a^2}
\end{equation}
uniformly over all $v \in [-g+2, -g+a/2]$.
\end{thm}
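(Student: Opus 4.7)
The theorem asserts four estimates --- upper and lower bounds for each of the two inequalities. I would prove them separately, exploiting the asymmetry of the dynamics: excursions of $V$ below $-g$ fight a ``hard'' restoring push whose intensity scales with $|V+g|$, giving a cubic exponent, while excursions above $-g$ fight only the ``soft'' constant gravity $-g$, giving a quadratic one. The principal tool is the Skorokhod identity $L_t = 0 \vee \sup_{s \le t}(B_s - S_s)$ from \eqref{oc9.1}, which translates $(V,H)$-events into $B$-events.

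For the upper bound in \eqref{equation:upperfluc}, set $\tau := \vtau_{-g+a}$ and $\tau' := \sup\{u < \tau : V_u = -g+a/2\} \vee 0$. On $\{\vtau_{-g+a} < \vtau_{-g+1}\}$ we have $V_u \ge -g+a/2$ on $[\tau', \tau]$, so $S_u - S_{\tau'} \ge (a/2 - g)(u - \tau')$. Combining the ``mass balance'' $L_\tau - L_{\tau'} = g\Delta + a/2$ (with $\Delta := \tau - \tau'$) with the Skorokhod inequality $L_\tau - L_{\tau'} \le \sup_{\tau' \le s \le \tau}(B_s - S_s)$ and the identity $B_{\tau'} - S_{\tau'} = L_{\tau'} - H_{\tau'}$, one obtains, after setting $\tilde B_s := B_{\tau'+s} - B_{\tau'}$,
\[
\sup_{s \in [0,\Delta]} \bigl(\tilde B_s + (g - a/2) s\bigr) \ge g\Delta + a/2 + H_{\tau'} \ge a/2.
\]
For $a > 4g$ the drift is $\le -a/4$, so \eqref{oc13.2} yields probability $\le \exp(-2(a/2-g)(a/2)) \le \exp(-Ca^2)$. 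The analogous analysis for the upper bound in \eqref{equation:lowerfluc} (now with $V_u \le -g-a/2$ on the terminal segment, positive drift $\mu := g+a/2$, and $\Delta \ge a/(2g)$ because $\dot V \ge -g$) produces the reversed constraint
\[
\sup_{s \in [0,\Delta]} \bigl(\tilde B_s + \mu s\bigr) \le g\Delta - a/2 + H_{\tau'}.
\]
A Doob/Girsanov-type inequality $\Prob\bigl(\sup_{s \le T}(\tilde B_s + \mu s) \le y\bigr) \le \exp(\mu y - \mu^2 T/2)$, evaluated at $T = a/(2g)$, $\mu = g + a/2$, $y = g\Delta - a/2 + H_{\tau'}$, has exponent $\sim -a^3/(16g)$ provided $H_{\tau'}$ is of moderate size. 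Since $\tau'$ is a last-exit rather than a stopping time, these arguments are implemented by decomposing into a countable union of \emph{first-hitting} times of $-g \pm a/2$ (at which $H = 0$ by Remark \ref{oc16.1}) and summing a geometric series over successive attempts; large values of $H_{\tau'}$ are handled by splitting on its magnitude and exploiting the exponential-type tail of the gap process. Making this last step genuinely rigorous and reconciling it with the cubic scaling is the main obstacle.

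For the lower bound in \eqref{equation:lowerfluc}, I construct an explicit favorable event on $B$: taking $v = -g-2$ as a representative starting point, consider $\mathcal{A} := \{B_t \le vt - gt^2/2 - 1 \text{ for all } t \in [0, (a-2)/g]\}$. On $\mathcal{A}$ the Brownian motion stays below the parabolic collision-free trajectory of the inert particle with a buffer, so $L \equiv 0$, and $V_t = v - gt$ descends deterministically to $-g - a$ at $t = (a-2)/g$ without ever touching $-g-1$. A direct Gaussian estimate gives $\Prob(\mathcal{A}) \ge C^-_1 \exp(-C^-_2 a^3)$: the parabola reaches depth $\sim a^2/g$ over a time window of length $\sim a/g$, costing $\exp(-C(a^2/g)^2 / (a/g)) = \exp(-Ca^3/g)$. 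The analogous construction for the lower bound in \eqref{equation:upperfluc} requires $B$ to rise sharply (roughly $B_t - \alpha t \ge \varepsilon$ with $\alpha \sim a$ over an interval of length $O(1)$), which forces $L$ to accumulate at rate $\sim a$ and pushes $V$ up to $-g+a$ in time $O(1)$, at Gaussian cost $\exp(-Ca^2)$. Verifying rigorously that each such Brownian event implies the claimed velocity event requires a careful pathwise analysis of the SDE, particularly in the upper case where the reflection dynamics at $H = 0$ must be controlled throughout the rapid local-time-accumulation phase.
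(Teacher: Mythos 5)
Your high-level heuristics are right (hard reflection gives cubic, soft gravity gives quadratic; translate through the Skorokhod identity into drifted Brownian constraints), and the paper's strategy for the two \emph{lower} bounds is essentially the same as yours. But there are two genuine gaps in the rest.

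First, the event $\mathcal{A}$ you define for the lower bound in \eqref{equation:lowerfluc} has probability zero: with $H_0=0$ one has $B_0=S_0$, so ``$B_t \le vt-gt^2/2-1$ for all $t\in[0,(a-2)/g]$'' already fails at $t=0$. You need a preliminary ``warm-up'' phase on a short interval $[0,\eps]$ that drives $B$ down to create a positive gap $H_\eps>\eps$ while keeping $V_\eps \ge -g-3$; only then can you impose a parabola-avoiding constraint on $B$ (the paper does exactly this with the event $A$ in \eqref{equation:eventdef}, then applies a precise parabola-crossing estimate from \cite{hofstad}). This is fixable but is precisely the point where a careless setup produces a vacuous bound.

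Second, and more seriously, your plan for the upper bounds relies on ``exploiting the exponential-type tail of the gap process'' to handle the case where $H$ is not small at the beginning of the final excursion. That gap-tail estimate is Theorem \ref{thm:gaponeside}, which is proved \emph{after} and \emph{using} Theorem \ref{thm:fluconeside}; invoking it here is circular. The paper avoids this by never needing a tail bound on $H$: it introduces the event $F_a$, uses Lemma \ref{lem:unifcontrol} to show that away from $F_a^c$ the process returns to $H=0$ within time $O(a)$, and uses the deterministic fact $\dot V \ge -g$ to conclude that $V$ can drop by at most $O(a)$ before the next collision. This converts the problematic ``$H_{\tau'}$ large'' scenario into a controlled re-start from $H=0$ at a velocity in a known window, after which a decomposition over explicit \emph{first-hitting} times (your own suggested remedy for the last-exit issue) and a geometric-series sum close the argument. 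Without an ingredient of this type your sketch cannot be made rigorous, because at the moment $V$ first reaches $-g\pm a/2$ the gap $H$ is \emph{not} zero, and you have no non-circular control on its size. The paper's upper bound in \eqref{equation:upperfluc} is also structured differently --- it telescopes over integer velocity levels and uses the scale function via \eqref{oc13.2}, summing $\exp(-\sum_k k)\sim\exp(-ca^2)$ --- which is a cleaner route than your last-exit decomposition and sidesteps the stopping-time issue entirely.
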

\begin{proof}
Recall that $C, C', C'', \dots$ denote positive constants whose values might change from line to line.
The initial conditions will be suppressed in the notation most of the time. We will assume that  $V_0=v$ and $H_0=0$ unless stated otherwise.

First we will prove the upper bound in \eqref{equation:lowerfluc}.
Let $\delta = 1/(24\sqrt{g})$ and 
$$
F_a=\displaystyle{\left\lbrace\sigma(t) \le t+3\frac{\delta a}{\sqrt{g}} \text{ for all } t \le \vtau_{-g-1} \wedge \vtau_{-g-a}\right\rbrace}.
$$ 
Suppose that $a$ is large enough so that $\frac{\delta^2 a^2}{8} > a$.
Then
the assumption $v \in [-g-a/2, -g-2]$ made in \eqref{equation:lowerfluc}
implies that
$v \in \left[-g-\frac{\delta^2 a^2}{8}, -g+\frac{\sqrt{g}\delta a}{4}\right]$,
an assumption of Lemma \ref{lem:unifcontrol}. Moreover
\begin{align*}
F_a^c \cap \{ \vtau_{-g-1} \le a^3\}
&\subset
\left\lbrace\sigma(t) > t+3\frac{\delta a}{\sqrt{g}} 
\text{ for some } t \le a^3 \land \vtau_{-g-1} \wedge \vtau_{-g-a}\right\rbrace \\
&\subset
\left\{\sigma(t) > t+3\frac{\delta a}{\sqrt{g}} \text{ for some } t \le a^3 \wedge \vtau_{-g+\sqrt{g}\delta a/4}\wedge \vtau_{-g-\delta^2 a^2/8}\right\}.
\end{align*}
We now use Lemma \ref{lem:unifcontrol} to see that
\begin{align}\label{oc18.3}
\Prob(F_a^c, \vtau_{-g-1} \le a^3)
\leq  C\frac{a^2}{\delta} e^{-C' a^3}.
\end{align}

We apply Lemma  \ref{lem:hittimeest} with
$a_1=1$ and $a_2 = -g -v$. 
The assumption of the lemma that $0 < a_1 < a_2$ is satisfied because we assume that $v< -g-2$ in \eqref{equation:lowerfluc}.
The lemma implies that for 
$$a^3 \geq 2(a_2 - a_1)/a_1 = 2(-g-v -1)/1 \geq 2(-g-(-g-2)-1) =2,$$
we have
\begin{align}
\Prob(\vtau_{-g-1}>a^3) 
&\leq \frac{4 (-g-v -1) }{((-g-v-1) + a^3)\sqrt{2\pi a^3}} e^{-a^3 /8}
\leq
\frac{4 (-g-(-g-2) -1) }{((-g-(-g-2)-1) + a^3)\sqrt{2\pi a^3}} e^{-a^3 /8}
\nonumber\\
&\leq Ce^{-C'a^3}. \label{oc18.2}
\end{align}
We combine \eqref{oc18.3} and \eqref{oc18.2} to obtain for large enough $a$,
\begin{eqnarray}\label{eqnarray:diffboundfour}
\Prob(\vtau_{-g-a} < \vtau_{-g-1}) 
& \le & \Prob(\vtau_{-g-a} < \vtau_{-g-1}, F_a )+\Prob(F_a^c, \vtau_{-g-1} 
\le a^3)+ \Prob(\vtau_{-g-1}>a^3)\nonumber\\
& \le & \Prob(\vtau_{-g-a} < \vtau_{-g-1}, F_a ) + C\frac{a^2}{\delta} e^{-C'a^3} + C''e^{-C'''a^3}.
\end{eqnarray}

Next we will estimate the  probability 
on the right hand side of \eqref{eqnarray:diffboundfour}. Note that if $\{\vtau_{-g-a} < \vtau_{-g-1}\} \cap F_a$ holds, then $\displaystyle{\sigma\left(\vtau_{-g-a/2}\right) \le \vtau_{-g-a/2}+ 3\frac{\delta a}{\sqrt{g}}} < \vtau_{-g-a/2}+\frac{a}{4g}$. 
The maximum rate of decrease for velocity $V$ is $g$, so $V$ can decrease by at most $a/4$ over the interval $\left[\vtau_{-g-a/2}, \sigma\left(\vtau_{-g-a/2}\right)\right]$, and, therefore, 
\begin{align}\label{oc19.1}
V\left(\sigma\left(\vtau_{-g-a/2}\right)\right) \geq -g-3a/4.
\end{align}
Thus, applying the strong Markov property at the stopping time $\sigma\left(\vtau_{-g-a/2}\right)$ and Remark \ref{oc18.1}, we obtain for $v \in [-g-a/2, -g-2]$,
\begin{equation}\label{equation:ineqone}
\Prob(\vtau_{-g-a} < \vtau_{-g-1}, F_a \mid V_0=v, H_0=0) 
\le \sup_{v \in [-g-3a/4,-g-a/2]}\Prob(\vtau_{-g-a} < \vtau_{-g-1}, F_a ).
\end{equation}
Suppose that $v \in  [-g-3a/4,-g-a/2]$.
If the event $\{\vtau_{-g-a} < \vtau_{-g-a/4}\}$ holds, then for any $t \in [0,a/(4g))$, as  $V_t \ge v -gt > -g-a$, we have $t < \vtau_{-g-a} < \vtau_{-g-a/4}$. Therefore, 
\begin{align*}
L_t-gt = V_t - V_0 \leq -g-a/4 -(-g-3a/4) =  a/2.
\end{align*}
Since $t \le \vtau_{-g-a/4}$ implies $L_t \ge L^{(-g-a/4)}_t=\sup_{u \le t}(B_u +(g+a/4)u)$, we obtain that
\begin{align*}
B_t + (g+a/4)t -gt \leq a/2
\end{align*}
for all $t \in [0,a/(4g)]$. Thus, for any $v \in  [-g-3a/4,-g-a/2]$,
using \eqref{oc13.3}, for large enough $a$,
\begin{align}\label{eqnarray:lowerlevelbound}
\Prob\left(\vtau_{-g-a} < \vtau_{-g-a/4} \right) 
\le  \Prob\left(B_{a/(4g)} \le a/2-a^2/(16g)\right) \le Ce^{-C'a^3}.
\end{align}

We define a sequence of stopping times by setting $T_0=0$, and for $k \ge 0$, 
\begin{align*}
T_{2k+1}&=
\begin{cases}
\inf\{t \ge T_{2k}:V_t=-g-a \text{ or } V_t=-g-a/4\} & \mbox{if } V_{T_{2k}} \neq -g-1, -g-a,\\
T_{2k} &\text{otherwise, }
\end{cases}\\
T_{2k+2}&=
\begin{cases}
\inf\{t \ge T_{2k+1}:V_t=-g-1 \text{ or } V_t=-g-a/2\} & \mbox{if } V_{T_{2k+1}} \neq -g-1, -g-a,\\
T_{2k+1} &\text{otherwise. }
\end{cases}
\end{align*}
Let
$$p_0(v,a)=\Prob(V_{T_1}=-g-a, F_a  \mid V_0=v, H_0=0),$$
and for $k \ge 1$, 
\begin{align}\label{eqnarray:termtwo}
&p_k(v,a)\\
&=\Prob(V_{T_{2j+1}}=-g-a/4, V_{T_{2j+2}}=-g-a/2 \text{ for }0 \le j <k,
 V_{T_{2k+1}}=-g-a, F_a  \mid V_0=v, H_0=0).\nonumber
\end{align}
We can then decompose the velocity path between the stopping times $\{T_k\}_{k \ge 1}$ to obtain
\begin{align}\label{eqnarray:pathdec}
\Prob(\vtau_{-g-a} < \vtau_{-g-1}, F_a  \mid V_0=v, H_0=0)=\sum_{k=0}^{\infty}p_k(v,a).
\end{align}
By \eqref{eqnarray:lowerlevelbound},
\begin{equation}\label{equation:pzero}
\sup_{v \in [-g-3a/4, -g-a/2]}p_0(v,a) \le Ce^{-C'a^3}.
\end{equation}
If the events $\{V_{T_2}=-g-a/2\}$ and $F_a$ hold then the event 
$\{V_{\sigma(T_2)} \in [-g-3a/4, -g-a/2]\}$ holds as well---this claim can be
proved just like \eqref{oc19.1}.
This and the strong Markov property applied at $T_1$ show that for $k \ge 1$,
and $v \in [-g-3a/4, -g-a/2]$,
\begin{align}\label{eqnarray:recrel}
&p_k(v,a)\\
  &\le \Expect\left(\Indicator_{\kpar{-g-a/2}}(V_{T_2}) \Indicator_{ [-g-3a/4, -g-a/2]}(V_{\sigma(T_2)} ) p_{k-1}(V_{\sigma(T_2)},a) \mid V_0=-g-a/4, H_0=0\right)\nonumber\\
&\le \Prob\left(V_{T_2}=-g-a/2\mid V_0=-g-a/4, H_0=0\right)
 \sup_{v \in [-g-3a/4, -g-a/2]}p_{k-1}(v,a).\nonumber
\end{align}

If 
$V_0=-g-a/4$ and $H_0=0$, then for $t \le \vtau_{-g-1}$, we have $L_t \ge L^{(-g-1)}_t$. Thus,
$$V_t-V_0 = L_t -gt \ge \sup_{u \le t}(B_u +(g+1)u)-gt \ge B_t +t.$$
This and the fact that the scale function for $B_t+t $ is $e^{-2x}$ (see \cite[Example~15.4.B]{KT})
imply that 
\begin{align*}
\Prob\left(V_{T_2}=-g-a/2\mid V_0=-g-a/4, H_0=0\right) 
  \le  \Prob(  \tau^{B,1}_{-a/4}  \leq \tau^{B,1}_{a/4-1} )   
=\frac{1-e^{-(a/2-2)}}{e^{a/2}-e^{-(a/2-2)}} \le C e^{-a/2},
\end{align*}
for sufficiently large $a$. Combining this estimate with \eqref{equation:pzero} and \eqref{eqnarray:recrel}, we obtain
$$\sup_{v \in [-g-3a/4, -g-a/2]}p_k(v,a) \le (Ce^{-a/2})^kC'e^{-C''a^3}.$$
This in turn can be substituted  into \eqref{eqnarray:pathdec} to show that
$$\sup_{v \in [-g-3a/4, -g-a/2]}\Prob(\vtau_{-g-a} < \vtau_{-g-1}, F_a  \mid V_0=v, H_0=0) \le Ce^{-C'a^3}.$$
This,  \eqref{eqnarray:diffboundfour} and \eqref{equation:ineqone} yield the upper bound in \eqref{equation:lowerfluc}.

\medskip

Next we will prove the lower bound in \eqref{equation:lowerfluc}. First, we will
argue that it suffices to work with the case $V_0=-g-2, H_0=0$. This is because, by the strong Markov property applied at $\vtau_{-g-2}$
and Remarks \ref{oc18.1} and \ref{oc16.1}, we can write for any $v \in [-g-a/2, -g-2]$,
\begin{align*}
\Prob(\vtau_{-g-a}<\vtau_{-g-1} \mid V_0=v, H_0=0) 
& \ge \Prob(\vtau_{-g-2}<\vtau_{-g-a}\mid V_0=v, H_0=0)\\
& \quad \times\Prob(\vtau_{-g-a}<\vtau_{-g-1}\mid V_0=-g-2, H_0=0).
\end{align*}
From the upper bound in \eqref{equation:lowerfluc}, it follows that for large enough $a$,
\begin{align*}
\inf_{v\in [-g-a/2, -g-2]}\Prob(\vtau_{-g-2}<\vtau_{-g-a} & \mid V_0=v, H_0=0)\\
&\ge \inf_{v\in [-g-a/2, -g-2]}\Prob(\vtau_{-g-1}<\vtau_{-g-a} \mid V_0=v, H_0=0) \ge \frac{1}{2}.
\end{align*}
Thus, for large enough $a$,
\begin{align*}
\inf_{v\in [-g-a/2, -g-2]}\Prob(\vtau_{-g-a}<\vtau_{-g-1} & \mid V_0=v, H_0=0)\\
& \ge \frac{1}{2}\times\Prob(\vtau_{-g-a}<\vtau_{-g-1}\mid V_0=-g-2, H_0=0),
\end{align*}
which proves our claim that it is enough to consider
the case $V_0=-g-2, H_0=0$.

Fix $\eps \in (0,1/g)$ such that $\displaystyle{(g+2)\eps+\frac{g\eps^2}{2}<\frac{1}{2}}$. As the local time $L_t$ is non-negative, $V_t-V_0 \ge -gt$ for all $t \ge 0$. Thus, assuming $V_0=-(g+2)$ and $H_0=0$, we obtain $S_t \ge -(g+2)t-gt^2/2$ for all $t \ge 0$. Note that the event
\begin{equation}\label{equation:eventdef}
A=\left\lbrace\sup_{u \le \eps}(B_u) < 1-(g+2)\eps-g\eps^2/2, B_{\eps}<-\eps-(g+2)\eps-g\eps^2/2\right\rbrace
\end{equation}
occurs with positive probability, say, $q$. If $A$ happens 
 then for any $t \in [0,\eps]$,
\begin{eqnarray*}
V_t-V_0=L_t-gt \le L_t = \sup_{u\le t}(B_u-S_u) &\le& \sup_{u \le t}(B_u)+\sup_{u \le t}(-S_u)\\
&\le& \sup_{u \le \eps}(B_u)+(g+2)\eps+ g\eps^2/2 < 1.
\end{eqnarray*}
This implies that if $V_0 = -g-2$ then $\vtau_{-g-1}>\eps$. Furthermore,
\begin{eqnarray*}
S_{\eps}-X_{\eps}=S_{\eps}-B_{\eps}+L_{\eps} \ge S_{\eps}-B_{\eps} \ge -(g+2)\eps- g\eps^2/2-B_{\eps}>\eps.
\end{eqnarray*}
Thus, 
\begin{align}\label{oc27.1}
A \subset\{\vtau_{-g-1}>\eps, S_{\eps}-X_{\eps}>\eps\}.
\end{align}
Hence, assuming that $V_0=-g-2$ and $H_0=0$, we will argue that the following series of inequalities holds:
\begin{eqnarray}\label{eqnarray:lowerbd}
\Prob(\vtau_{-g-a}<\vtau_{-g-1}) & \ge & \Prob(\vtau_{-g-1}>\eps, S_{\eps}-X_{\eps}>\eps, X_t<S_t \text{ for all } t\in [\eps,a/g])\nonumber\\
&\ge& \Prob(\vtau_{-g-1}>\eps, S_{\eps}-X_{\eps}>\eps)\nonumber\\
&\quad & \times\inf_{v \ge -(g+3), z \ge \eps}\Prob(X_t<S_t \text{ for all } t\in [0,a/g] \mid V_0=v,H_0=z)\nonumber\\
&\ge& q \Prob(X_t<S_t \text{ for all } t\in [0,a/g] \mid V_0=-(g+3),H_0=\eps).
\end{eqnarray}
The first inequality above holds because if $\vtau_{-g-1}>\eps$ and $X_t<S_t$ for all $t \in [\eps, a/g]$, then
the velocity $V_t$ decreases linearly and reaches the level $-g-a$ for some $t \in [\eps, a/g]$. 
The second inequality above follows from the Markov property 
applied at time $\eps$ combined with the fact that $V_{\eps} \ge -(g+2)-g\eps > -(g+3)$ as $\eps <1/g$. To see the third inequality, first note that $\Prob(\vtau_{-g-1}>\eps, S_{\eps}-X_{\eps}>\eps) \ge \Prob(A)=q$. 
If $X_t<S_t$ for all $ t\in [0,a/g]$ then $S$ is a parabola on this 
time interval. 
For $v \ge -(g+3)$ and $z \ge \eps$,
the parabola $t \mapsto X_0 +z+vt-\frac{gt^2}{2}$
lies above the parabola
$t \mapsto X_0 +\eps - (g+3)t-\frac{gt^2}{2}$
or the two parabolas are identical.
This implies the last inequality in \eqref{eqnarray:lowerbd}.

Observe that
\begin{align*}
\Prob&(X_t<S_t \text{ for all } t\in [0,a/g] \mid V_0=-(g+3),H_0=\eps)\\
&=\Prob(\eps-(g+3)t-gt^2/2 - B_t > 0 \text{ for } t\leq a/g)\\
&=\Prob(\eps/g-(g+3)t/g-t^2/2 - (1/g)B_t > 0 \text{ for } t\leq a/g).
\end{align*}
The form of the expression on the last line is needed so that it matches 
the notation in \cite{hofstad}. 
To use an estimate from that paper, we introduce the following notation.
\begin{align*}
R(t) = (t + (g+3)/g)^3 - t^3/4 - (g+3)^3/g^3 - 6 ((g+3)/g) (\eps/g)
- 3 (\eps/g) t.
\end{align*}
We will write $R'(t) = \frac\prt{\prt t} R(t)$.
By \cite[Cor.~2.1]{hofstad}, we obtain for large enough $a$,
\begin{align*}
\Prob&(X_t<S_t \text{ for all } t\in [0,a/g] \mid V_0=-(g+3),H_0=\eps)\\
&=\Prob(\eps/g-(g+3)t/g-t^2/2 - (1/g)B_t > 0 \text{ for } t\leq a/g)\\
&=\frac{ 3 (1/g) \sqrt{a/g}}{\sqrt{2\pi}}
\frac{ \exp( - R(a/g) / (6 /g^2)) }{ R'(a/g)}
(1 + O (g/a))
\ge Ce^{-C'a^3},
\end{align*}
which, along with \eqref{eqnarray:lowerbd}, yields the lower bound in \eqref{equation:lowerfluc}.

\medskip

Next, we set out to prove the upper bound in \eqref{equation:upperfluc}. For $a\ge 4$, applying the strong Markov property at time $\vtau_{-g+a-1}$ and
Remarks \ref{oc18.1} and \ref{oc16.1}, we have for any $v \in [-g+2,-g+a-1]$,
\begin{align}\label{eqnarray:markov}
&\Prob(\vtau_{-g+a} < \vtau_{-g+1} \mid V_0=v, H_0=0)\\
&= \Prob(\vtau_{-g+a-1} < \vtau_{-g+1} \mid V_0=v, H_0=0)
 \Prob(\vtau_{-g+a} < \vtau_{-g+1} \mid V_0=-g+a-1, H_0=0).\nonumber
\end{align}
We  estimate the second probability on the right hand side of the above equation as follows,
\begin{align}\label{eqnarray:probdec}
&\Prob(\vtau_{-g+a} < \vtau_{-g+1} \mid V_0=-g+a-1, H_0=0)\\
&=\sum_{k=2}^{a-1}\Prob(\vtau_{-g+k} < \vtau_{-g+a}\le \vtau_{-g+k-1} \mid V_0=-g+a-1, H_0=0).\nonumber
\end{align}
If $V_0 = -g+k$ and
 $t \le \vtau_{-g+k-1}$ then $L_t \le L^{(-g+k-1)}_t=\sup_{u \le t}(B_u + (g-k+1)u)$. Thus, for $2 \le k \le a-1$,
\begin{align*}
&\Prob(\vtau_{-g+k} < \vtau_{-g+a}\le \vtau_{-g+k-1} \mid V_0=-g+a-1, H_0=0)\\
 &\le \Prob(\vtau_{-g+a}\le \vtau_{-g+k-1} \mid V_0=-g+k, H_0=0)\\
 &= \Prob\left(\inf\{t\geq 0: L_t-gt =a-k\} < \inf\{t\geq 0: L_t-gt =-1\}   \mid V_0=-g+k, H_0=0\right)\\
 &\le\Prob\big(
\inf\{t\geq 0: L^{(-g+k-1)}_t-gt =a-k\} \\
&\qquad \qquad < \inf\{t\geq 0: L^{(-g+k-1)}_t-gt =-1\} \mid V_0=-g+k, H_0=0\big)\\
 &\le\Prob\left(\inf\{t\geq 0: L^{(-g+k-1)}_t-gt =a-k\} < \infty \mid V_0=-g+k, H_0=0\right)\\
 &=\Prob\left(\inf\{t\geq 0: B_t + (g-k+1)t-gt =a-k\} < \infty
\right)\\
 &=\Prob\left(\inf\{t\geq 0: B_t - (k-1)t =a-k\} < \infty
\right)\\
 &=e^{-2(k-1)(a-k)},
\end{align*}
for sufficiently large $a$. In the first step, we used the strong Markov property at the stopping time $U=\inf\{t \ge \vtau_{-g+k}: V_t=-g+k, S_t=X_t\}$ which satisfies $\vtau_{-g+k} \le U < \vtau_{-g+a}$ on the event $\{\vtau_{-g+k} < \vtau_{-g+a}\le \vtau_{-g+k-1}\}$,
by Remark \ref{oc16.1}. 
We also used Remark \ref{oc18.1}.
For the last step, we used 
\eqref{oc13.2}.
 For $a \ge 4$, $(k-1)(a-k) \ge a/2$ for $2 \le k \le a-1$ and thus, 
substituting the above estimate back into \eqref{eqnarray:probdec}, we get
$$\Prob(\vtau_{-g+a} < \vtau_{-g+1} \mid V_0=-g+a-1, H_0=0) \le (a-2)e^{-a}$$
for $a \ge 4$. This, along with \eqref{eqnarray:markov}, gives us
for  $v \in [-g+2,-g+a-1]$,
\begin{align*}
\Prob(\vtau_{-g+a} < \vtau_{-g+1} \mid V_0=v, H_0=0)
\le \Prob(\vtau_{-g+a-1} < \vtau_{-g+1} \mid V_0=v, H_0=0) (a-2)e^{-a}.
\end{align*}
For $a \ge 10$,  we apply the
above estimate inductively with $a$ replaced by $a/2, a/2+1,  \dots, a/2+k_*$,
where $k_*$ is the largest integer such that $a/2+k_* \leq a$,
to obtain
\begin{eqnarray*}
\sup_{v \in [-g+2,-g+a/2]}\Prob(\vtau_{-g+a} < \vtau_{-g+1} \mid V_0=v, H_0=0)&\le& C(a-2)^{a/2+1}
\exp\left(-\sum_{k=\lfloor a/2 \rfloor}^{\lfloor a \rfloor} k\right),
\end{eqnarray*}
which implies the  upper bound in \eqref{equation:upperfluc}.

\medskip

Finally, we proceed to prove the lower bound in \eqref{equation:upperfluc}. By the strong Markov property applied at $\vtau_v$ and Remarks \ref{oc18.1} and \ref{oc16.1}, for any $v \in [-g+2,-g+a/2]$,
$$
\Prob(\vtau_{-g+a} < \vtau_{-g+1} \mid V_0=-g+2, H_0=0)  \le  \Prob(\vtau_{-g+a} < \vtau_{-g+1} \mid V_0=v, H_0=0) .
$$
Thus, it suffices to work with the case $V_0=-g+2$. For $t < 1/g$, we have $V_t \ge V_0-gt > -g+1$. Hence $\vtau_{-g+1} \ge 1/g$. 

Suppose the event $\left\lbrace B_{1/g}>a\left(1+1/g\right)-2\right\rbrace$ holds. Then we claim that $\displaystyle{\sup_{t\le1/g}V_t > -g+a}$.
Suppose without loss of generality that $B_0=S_0=0$. If $\displaystyle{\sup_{t\le1/g}V_t \le -g+a}$, then $S_{1/g} \le (-g+a)/g$. Therefore, by \eqref{oc9.1},
$$L_{1/g}=\sup_{u\le 1/g}(B_u-S_u) \ge B_{1/g}-S_{1/g} >a\left(1+\frac{1}{g}\right)-2-\frac{(-g+a)}{g}=a-1.$$
Thus,
$$V_{1/g}=-g+2+L_{1/g}-g(1/g)>-g+2+(a-1)-1=-g+a.$$
which gives a contradiction. Hence, the event $\left\lbrace B_{1/g}>a\left(1+1/g\right)-2\right\rbrace$ implies the event $\{\vtau_{-g+a} < \vtau_{-g+1}\}$. Therefore, 
for  $v \in [-g+2,-g+a/2]$,
$$
\Prob(\vtau_{-g+a} < \vtau_{-g+1} \mid V_0=v, H_0=0)
 \ge \Prob(B_{1/g}>a(1+1/g)-2) \ge Ce^{-C'a^2},$$
which gives the lower bound in \eqref{equation:upperfluc}.
\end{proof}

The following theorem relates oscillations of the gap process $H_t$ to those of $V_t$.

\begin{thm}\label{thm:gaponeside}
There exist positive constants $\delta, a_0$ and $C^{*+}_i,C^{*-}_i$ for $i=1,2,3,4$ such that for any $a\ge a_0$:
\begin{equation}\label{equation:lowerflucgap}
C^{*-}_1e^{-C^{*-}_2a^{3/2}} \le \Prob(\stau_a < \vtau_{-g-1} \mid V_0=v, H_0=0) \le C^{*-}_3e^{-C^{*-}_4a^{3/2}}
\end{equation}
uniformly over all $v \in [-\delta \sqrt{a}, -g-2]$
and
\begin{equation}\label{equation:upperflucgap}
C^{*+}_1e^{-C^{*+}_2a} \le \Prob(\stau_a < \vtau_{-g+1} \mid V_0=v, H_0=0) \le C^{*+}_3e^{-C^{*+}_4a}
\end{equation}
uniformly over all $v \in [-g+2, \delta \sqrt{a}]$.
\end{thm}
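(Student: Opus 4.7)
My plan is to mirror the structure of Theorem \ref{thm:fluconeside}, taking advantage of the scaling heuristic that reaching a gap of size $a$ corresponds to the velocity $V$ reaching an extreme value of order $\sqrt{a}$ (since $S_t-X_t$ is built from $\int V\,du$, and the square-root time-scale balances the velocity range against the diffusive noise in $B$). Substituting $a\mapsto\sqrt a$ in the $e^{-Ca^2}$ and $e^{-Ca^3}$ costs of Theorem \ref{thm:fluconeside} produces the target exponents $e^{-Ca}$ and $e^{-Ca^{3/2}}$, respectively.

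For the upper bound in \eqref{equation:upperflucgap}, I would decompose
\[\Prob(\stau_a < \vtau_{-g+1}) \le \Prob(\vtau_{-g+c\sqrt{a}} < \vtau_{-g+1}) + \Prob(\stau_a < \vtau_{-g+1} \wedge \vtau_{-g+c\sqrt{a}}),\]
for a small $c>0$. The first summand is $\le Ce^{-C'c^2a}$ by \eqref{equation:upperfluc}. For the second, the identity $H_t=\int_0^t V_u\,du + V_t - V_0 + gt - B_t$ combined with $V_u\le -g+c\sqrt a$ on $[0,t]$ gives $H_t \le c\sqrt a(t+1) + O(\sqrt a) - B_t$, so $H_t\ge a$ forces the Brownian condition $-B_t \ge a - c\sqrt a(t+1) - O(\sqrt a)$; combining this Gaussian tail with Lemma \ref{lem:hitup}'s exponential tail on $\vtau_{-g+1}$ and optimizing over a window of length $O(\sqrt a)$ yields the $e^{-Ca}$ estimate. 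The matching lower bound follows from strong Markov at $\vtau_{-g+c\sqrt a}$: by Remark \ref{oc16.1} the gap vanishes there, and $\Prob(\vtau_{-g+c\sqrt a}<\vtau_{-g+1})\ge Ce^{-C'c^2a}$ by \eqref{equation:upperfluc}; this is multiplied by a constant (in $a$) lower bound on $\Prob(\stau_a<\vtau_{-g+1}\mid V_0=-g+c\sqrt a,H_0=0)$, coming from a direct Brownian construction during the collision-free phase where $V_t=-g+c\sqrt a-gt$ drives $H$ up deterministically.

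The bounds in \eqref{equation:lowerflucgap} follow analogous lines. For the upper bound, the condition $V_u\le -g-1$ on $[0,\stau_a\wedge\vtau_{-g-1}]$ yields $H_t\le -t + \delta\sqrt a - B_t$, so $H_t\ge a$ requires a Brownian fluctuation of order $a$ on a window of length $\sim\sqrt a$, producing Gaussian cost $e^{-Ca^{3/2}}$, while Lemma \ref{lem:hittimeest} controls the probability of $\vtau_{-g-1}$ being that large. For the lower bound, starting from $V_0=-g-2$ I would construct an explicit event where no collisions occur on $[0,\sqrt a]$ (so that $V_t=V_0-gt$ stays strictly below $-g-1$) while the Brownian path satisfies $-B_t\ge (g+2)t+gt^2/2$ throughout $[0,\sqrt a]$ and $-B_{\sqrt a}$ reaches a value of order $a$; by \cite[Cor.~2.1]{hofstad} used in exactly the way it is used in the proof of Theorem \ref{thm:fluconeside}, this event has probability $\ge Ce^{-C'a^{3/2}}$ and forces $\stau_a<\vtau_{-g-1}$.

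The main technical obstacle is the time-horizon optimization in the upper bounds: a direct application of Lemmas \ref{lem:hittimeest} and \ref{lem:hitup} at the optimal scale $t\sim\sqrt a$ controls $\vtau_{-g\pm1}$ only at rate $e^{-C\sqrt a}$, which is too weak compared to the claimed exponents. I would close this gap by iterating the basic estimate via the strong Markov property over successive $O(\sqrt a)$-length windows---using that, once a window expires without the gap reaching $a$, the resulting starting configuration still falls inside the hypothesis ranges and so the estimate may be re-applied---and by exploiting the structural constraint $B_t\le -t-(g+1)+\delta\sqrt a$ forced along trajectories with $V_t\le -g-1$ via the identities $L_t=V_t-V_0+gt$ and $L_t\ge B_t-S_t$, which sharpens the Brownian part of the estimate to the target $a^{3/2}$ rate.
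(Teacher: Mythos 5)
Your scaling heuristic is correct, and the lower bounds you sketch are essentially the paper's (the one cosmetic difference is that the paper invokes \cite[Thm.~2.4]{hofstad} rather than Cor.~2.1 for the parabola-avoidance estimate; also, instead of imposing a separate condition on $-B_{\sqrt{a}}$, the paper absorbs the ``reach order $a$'' requirement into the parabola itself by giving it slope $\sqrt{a}/2$, which makes the estimate a single application of the Brownian-below-parabola bound).

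The genuine gap is in the upper bounds, and you correctly diagnose it yourself: a single time-threshold optimization, balancing the Gaussian tail on $B$ against the exponential tail on $\vtau_{-g\pm 1}$ from Lemmas \ref{lem:hittimeest} and \ref{lem:hitup}, saturates at $e^{-C\sqrt{a}}$ because those lemmas require $t\gtrsim\sqrt{a}$ and the Brownian cost then vanishes. Your proposed fix --- iterating the bound over successive $O(\sqrt{a})$-windows --- is not developed enough to be convincing. After a window expires without the gap reaching $a$, the process lands in an essentially arbitrary state $(V_t,H_t)$ with $H_t$ possibly close to $a$ already, so the per-window estimate cannot simply be re-applied with the same hypotheses; you would need uniformity over these intermediate states, and nothing in the sketch controls them. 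Nor does your final ``structural constraint'' observation ($B_t \le -t -(g+1)+\delta\sqrt{a}$) evidently repair this: it is precisely the inequality feeding the Gaussian tail, so it does not sharpen the $\vtau_{-g-1}$ side of the balance, which is where the rate is lost.

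The paper sidesteps the balance entirely with a ``delayed collision from a large gap'' argument. Rather than conditioning on $\stau_a$ directly, it conditions on $\stau_{ca}$ (the first time the gap reaches a level proportional to $a$) and applies the strong Markov property there. The key observations are then: (i) by Lemma \ref{lem:unifcontrol}, up to the relevant velocity-exit times, the next collision must happen within time $\sqrt{a}/g$ with all but probability $e^{-Ca^{3/2}}$; (ii) once $H_0 = 2a/g$ and $V_0$ is confined to a range of size $O(\sqrt{a})$, a collision within time $\sqrt{a}/g$ forces $\sup_{t\le\sqrt{a}/g}B_t \ge a/g$, which by \eqref{oc13.3} costs $e^{-Ca^{3/2}}$; (iii) if no collision happens in that window, then $V$ falls by $\sqrt{a}$, so $\vtau_{-\sqrt{a}/2}$ (resp.\ $\vtau_{4\delta\sqrt{a}}$) occurs first, and those velocity excursions are priced by Theorem \ref{thm:fluconeside} at $e^{-Ca^{3/2}}$ (resp.\ $e^{-Ca}$). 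The time variable never has to be tuned against a Gaussian estimate because the velocity confinement itself carries the correct exponent. This is the idea missing from your proposal, and it is what makes the proof go through.
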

\begin{proof}

Observe that
\begin{align*}
\sup_{v \in [-\sqrt{a}/4, -g-2]}
&\Prob(\stau_{2a/g} < \vtau_{-g-1}
 \mid V_0=v, H_0=0)\\
& \le \sup_{v \in [-\sqrt{a}/4, -g-2]}\Prob(\stau_{2a/g} < \vtau_{-g-1} \le \vtau_{-\sqrt{a}/2}\mid V_0=v, H_0=0)\\
& \quad + \sup_{v \in [-\sqrt{a}/4, -g-2]}\Prob( \vtau_{-\sqrt{a}/2} < \vtau_{-g-1}\mid V_0=v, H_0=0).
\end{align*}
By Theorem \ref{thm:fluconeside}, the second term is bounded above by $Ce^{-C'a^{3/2}}$. So, we estimate the first term. By the strong Markov property applied at time $\stau_{2a/g}$, we can write
\begin{align*}
\sup_{v \in [-\sqrt{a}/4, -g-2]}
&\Prob(\stau_{2a/g} < \vtau_{-g-1} \le \vtau_{-\sqrt{a}/2} \mid V_0=v, H_0=0)\\
& \le \sup_{v \in (-\sqrt{a}/2, -g-1)}\Prob(\vtau_{-g-1} \le \vtau_{-\sqrt{a}/2}\mid V_0=v, H_0=2a/g).
\end{align*}
Recall that $\sigma(u)=\inf\{t \ge u: S_t=X_t\}$.
We will argue that for any $v \in (-\sqrt{a}/2, -g-1)$,
\begin{align*}
\Prob(\vtau_{-g-1} \le \vtau_{-\sqrt{a}/2}\mid V_0=v, H_0=2a/g) & \le \Prob(\sigma(0) < \sqrt{a}/g \mid V_0=v, H_0=2a/g)\\
& \le \Prob(\sigma(0) < \sqrt{a}/g \mid V_0=-\sqrt{a}/2, H_0=2a/g).
\end{align*}
The first inequality above follows from the fact that if $\sigma(0) \ge \sqrt{a}/g$ then $V_t$ is strictly decreasing on $t \in [0,\sqrt{a}/g]$ and $V_{\sqrt{a}/g} < -g-1-\sqrt{a}$. As $V_0<-g-1$, this implies $\vtau_{-\sqrt{a}/2}< \vtau_{-g-1}$.

The second inequality holds because the first collision time between the inert particle and the Brownian particle with starting configuration $V_0=v'>-\sqrt{a}/2, H_0=2a/g$ stochastically dominates the first collision time starting from $V_0=-\sqrt{a}/2, H_0=2a/g$.

Note that if $V_0=-\sqrt{a}/2$, then for any $t \in [0,\sqrt{a}/g]$,
\begin{align}\label{oc29.1}
S_t \ge S_0- \frac{\sqrt{a}}{2}\cdot \frac{\sqrt{a}}{g}-\frac{g}{2}\left(\frac{\sqrt{a}}{g}\right)^2=S_0-\frac{a}{g}.
\end{align}
As $L_t$ is non-negative,
$$\sup_{t \le \sqrt{a}/g}X_t \le X_0+ \sup_{t \le \sqrt{a}/g}B_t.
$$
Thus, for $\sigma(0) < \sqrt{a}/g$ to hold, $X_t$ must hit the level $S_0-(a/g)$ for some $t \in [0, \sqrt{a}/g]$. Hence, $\sup_{t \le \sqrt{a}/g} X_t \ge S_0-(a/g)$ which, by the above, implies $\sup_{t \le \sqrt{a}/g} B_t \ge S_0-X_0-(a/g)$. Therefore, using \eqref{oc13.3},
\begin{align}\label{align:uppar}
\Prob(\sigma(0) < \sqrt{a}/g \mid V_0=-\sqrt{a}/2, H_0=2a/g) & \le \Prob\left(\sup_{t \le \sqrt{a}/g}B_t \ge \frac{a}{g}\right) \le Ce^{-C'a^{3/2}}
\end{align}
proving the upper bound in \eqref{equation:lowerflucgap}. 

\medskip

To prove the lower bound, we proceed similarly as the proof of the lower bound in \eqref{equation:lowerfluc}. Note that by the strong Markov property
applied at $\vtau_{-g-2}$ and Remarks \ref{oc18.1} and \ref{oc16.1}, for any $v \in [-\sqrt{a}/4, -g-2]$,
\begin{align*}
&\Prob(\stau_{2a/g} < \vtau_{-g-1} \mid V_0=v, H_0=0)\\
&\ge \Prob(\vtau_{-g-2} <\stau_{2a/g} \mid V_0=v, H_0=0)
\Prob(\stau_{2a/g}<\vtau_{-g-1}\mid V_0=-g-2, H_0=0).
\end{align*}
By the upper bound in \eqref{equation:lowerflucgap}, for sufficiently large $a$,
\begin{align*}
\inf_{v \in [-\sqrt{a}/4, -g-2]}&\Prob(\vtau_{-g-2} <\stau_{2a/g} 
\mid V_0=v, H_0=0)\\
& \ge \inf_{v \in [-\sqrt{a}/4, -g-2]}\Prob(\vtau_{-g-1} \le \stau_{2a/g} \mid V_0=v, H_0=0) \ge \frac{1}{2}.
\end{align*}
Thus, it suffices to assume that the starting configuration is $V_0=-g-2, H_0=0$. Fix $\eps \in (0,1/g)$ such that $\displaystyle{(g+2)\eps+\frac{g\eps^2}{2}<\frac{1}{2}}$ and define the event $A$ as in \eqref{equation:eventdef}. Recall that $\Prob(A) =q>0$
and $A \subset\{\vtau_{-g-1}>\eps, S_{\eps}-X_{\eps}>\eps\}$ (see \eqref{oc27.1}).  Note that $S_t - S_{\eps} \ge -(g+3)t - \frac{1}{2}gt^2$ for $t \ge \eps$.

Define the downward parabola
$$
y(t)=\eps -\frac{\sqrt{a}}{2}t-\frac{1}{2}gt^2.
$$
Our strategy will be to show that with probability bounded below by $Ce^{-C'a^{3/2}}$, the process $X_t-X_{\eps}$ remains below the downward parabola $y(t-\eps)$
for all $t \in [\eps, \sqrt{a}/g + \eps]$. Assuming that this holds and using the fact that the gap between $S_t$ and $y(t-\eps)$ increases with $t$, we will conclude that the gap between $S_t$ and $X_t$ increases as well. Since $y(t-\eps)$ lies strictly below $S(t)$ on $ (\eps, \sqrt{a}/g + \eps]$, there is no collision between $S$ and $X$ and the velocity $V$ decreases strictly, ensuring that the velocity stays below the level $-g-1$ on this time interval.

Define the event
$$
F=\{X_t-X_{\eps} \le y(t-\eps) \text{ for all } t \in [\eps, \sqrt{a}/g + \eps]\}.
$$
If $A \cap F$ holds, 
then $\vtau_{-g-1}>\sqrt{a}/g + \eps$. Since $\eps<1/g$, $V_{\eps} \ge -(g+3)$ and 
$$
S_{\sqrt{a}/g + \eps} \ge S_{\eps} -(g+3)\frac{\sqrt{a}}{g}-\frac{1}{2}g\left(\frac{\sqrt{a}}{g}\right)^2 \ge S_{\eps}-\frac{3a}{4g},
$$
for sufficiently large $a$. Recalling that $H_{\eps}=S_{\eps}-X_{\eps}>\eps$, we obtain
\begin{align*}
S_{\sqrt{a}/g + \eps}-X_{\sqrt{a}/g + \eps} &\ge S_{\eps}-\frac{3a}{4g}-X_{\eps}-y(\sqrt{a}/g)\\
&= (S_{\eps}-X_{\eps})-\frac{3a}{4g}-\eps +\frac{\sqrt{a}}{2}\cdot \frac{\sqrt{a}}{g} +\frac{1}{2}g\left(\frac{\sqrt{a}}{g}\right)^2 > \frac{a}{4g}.
\end{align*}
Therefore, under $A \cap F$, it holds that $\stau_{a/4g} < \sqrt{a}/g + \eps < \vtau_{-g-1}$.
Thus, to prove the lower bound in \eqref{equation:lowerflucgap}, it will suffice to show that $\Prob(A \cap F) \ge Ce^{-C'a^{3/2}}$.

By the Markov property applied at time $\eps$,
\begin{align}\label{align:intersect}
\Prob(A \cap F) &\ge q\inf_{v' \ge -(g+3), z > \eps} \Prob(X_t \le y(t) \text{ for all } t \in [0, \sqrt{a}/g] \mid V_0= v', H_0=z)\nonumber\\
&\ge q\Prob(X_t \le y(t) \text{ for all } t \in [0, \sqrt{a}/g] \mid V_0= -g-3, H_0=\eps),
\end{align}
where the last inequality follows by a stochastic domination argument similar to that in \eqref{eqnarray:lowerbd}.

By  \cite[Thm.~2.4]{hofstad} (see especially (2.4)), there is $C>0$  such that
\begin{equation}\label{equation:hofstad}
\Prob(X_t \le y(t) \text{ for all } t \in [0, \sqrt{a}/g] \mid V_0= -g-3, H_0=\eps) \ge C\int_{\sqrt{a}/g}^{\infty}t^{1/2}
e^{-Z(t)}dt,
\end{equation}
where
\begin{align*}
Z(t) = \left( \rpar{t+\frac{\sqrt{a}}{2g}}^3 - \frac{t^3}{4}-\rpar{\frac{\sqrt{a}}{2g}}^3 - 6\frac{\sqrt{a}}{2g} \frac{\eps}{g} - 3\frac{\eps}{g} t
\right) \frac{g^2}{6}.
\end{align*}
For $t \ge \sqrt{a}/g$, we have $\sqrt{a}/(2g) \le t/2$. A routine calculation then shows that $Z(t) \le C t^3$. Substituting this in \eqref{equation:hofstad}, we get
$$
\Prob(X_t \le y(t) \text{ for all } t \in [0, \sqrt{a}/g] \mid V_0= -g-3, H_0=\eps) \ge Ce^{-C'a^{3/2}}.
$$
This estimate and \eqref{align:intersect} imply that $\Prob(A \cap F) \ge Ce^{-C'a^{3/2}}$ which gives the lower bound in \eqref{equation:lowerflucgap}.

\medskip

Next we will prove the upper bound in \eqref{equation:upperflucgap}.
We apply Lemma \ref{lem:unifcontrol} with $N=3$, replacing $a$ in the lemma by $\sqrt{a}$ and taking $\delta$ there to be $1/(3\sqrt{g})$. The lemma implies that we can find positive constants $C, C', a_0$ such that
\begin{equation}\label{equation:delsqrt}
\Prob\left(\sigma(t) > t +\frac{\sqrt{a}}{g} \text{ for some } t \le a^{3/2} \wedge \vtau_{-g+\frac{\sqrt{a}}{12} }\wedge \vtau_{-g-\frac{a}{72g}} \mid V_0=v, H_0=0\right) \le Ce^{-C'a^{3/2}},
\end{equation}
for all $a \ge a_0$ and all $v \in [-g-\frac{a}{72g},-g+\frac{\sqrt{a}}{12}]$. For any $0<\delta<\frac{1}{48}$, \eqref{equation:delsqrt} implies
\begin{equation}\label{equation:bleh}
\sup_{v \in [-g+2, \delta \sqrt{a}]}\Prob\left(\sigma(\stau_{2a/g}) > \stau_{2a/g} +\frac{\sqrt{a}}{g}, \ \stau_{2a/g} \le a^{3/2} \wedge \vtau_{4\delta\sqrt{a}}\wedge \vtau_{-g+1} \mid V_0=v, H_0=0\right) \le Ce^{-C'a^{3/2}},
\end{equation}
for sufficiently large $a$. Next we apply Lemma \ref{lem:hitup} with $a_1=1$ and $a_2=g+v$. We note that the bounds in that lemma do not depend on $a_2$ provided $t \ge 2(a_2-a_1)/(g \wedge a_1)$. We obtain
\begin{equation}\label{equation:hithere}
\sup_{v \in [-g+2, \delta \sqrt{a}]}
\Prob\left(\vtau_{-g+1} > a^{3/2} \mid V_0=v, H_0=0\right) \le Ce^{-C'a^{3/2}}.
\end{equation}
From \eqref{equation:bleh} and \eqref{equation:hithere}, we see that
\begin{align}\label{equation:unif}
&\sup_{v \in [-g+2, \delta \sqrt{a}]} \Prob\left(\sigma(\stau_{2a/g}) > \stau_{2a/g} + \frac{\sqrt{a}}{g}, \stau_{2a/g} \le \vtau_{-g+1} \leq \vtau_{4\delta \sqrt{a}}\mid V_0=v, H_0=0\right)\nonumber \\
&\leq \sup_{v \in [-g+2, \delta \sqrt{a}]} \Prob\left(\sigma(\stau_{2a/g}) > \stau_{2a/g} + \frac{\sqrt{a}}{g}, \stau_{2a/g} \le \vtau_{-g+1} \wedge \vtau_{4\delta \sqrt{a}}\mid V_0=v, H_0=0\right)\nonumber\\
& \le C e^{-C'a^{3/2}},
\end{align}
for sufficiently large $a$.
Applying the strong Markov property at $\stau_{2a/g}$,
\begin{align}\label{align:eq2}
\sup_{v \in [-g+2, \delta \sqrt{a}]}
&\Prob\left(\stau_{2a/g}  < \vtau_{-g+1} \le \vtau_{4\delta \sqrt{a}}, \ \sigma(\stau_{2a/g}) \le \stau_{2a/g} + \sqrt{a}/g \mid V_0=v, H_0=0\right)\nonumber\\
& \le \sup_{v' \in (-g+1, 4\delta \sqrt{a})}
\Prob\left(\sigma(0) \le \sqrt{a}/g \mid V_0=v', H_0=2a/g\right)\nonumber\\
& \le \Prob\left(\sigma(0) \le \sqrt{a}/g \mid V_0=-g+1, H_0=2a/g\right).
\end{align}

Note that if $V_0=-g+1$, then for sufficiently large $a$ and all $t \in [0,\sqrt{a}/g]$,
$$
S_t \ge S_0-\left(-g+1\right)\frac{\sqrt{a}}{g}-\frac{g}{2}\left(\frac{\sqrt{a}}{g}\right)^2=S_0-\frac{a}{g}.
$$
The argument following \eqref{oc29.1} can now be repeated verbatim to show the following analogue of \eqref{align:uppar},
\begin{align*}
\Prob\left(\sigma(0) \le \sqrt{a}/g \mid V_0=-g+1, H_0=2a/g\right) \le Ce^{-C'a^{3/2}}.
\end{align*}
This, \eqref{equation:unif} and \eqref{align:eq2} yield
\begin{equation}\label{equation:eqC}
\sup_{v \in [-g+2, \delta \sqrt{a}]}
\Prob\left(\stau_{2a/g} < \vtau_{-g+1} \le \vtau_{4\delta \sqrt{a}} \mid V_0=v, H_0=0\right) \le Ce^{-C'a^{3/2}}.
\end{equation}

We have
\begin{align}\label{align:eqA}
\sup_{v \in [-g+2, \delta \sqrt{a}]}&
\Prob\left(\stau_{2a/g} < \vtau_{-g+1}  \mid V_0=v, H_0=0\right)\nonumber\\
& \le \sup_{v \in [-g+2, \delta \sqrt{a}]}
\Prob\left(\stau_{2a/g} < \vtau_{-g+1} \le \vtau_{4\delta \sqrt{a}} \mid V_0=v, H_0=0\right)\nonumber\\
&\quad + \sup_{v \in [-g+2, \delta \sqrt{a}]}
\Prob\left(\vtau_{4\delta \sqrt{a}} < \vtau_{-g+1} \mid V_0=v, H_0=0\right).
\end{align}
We apply Theorem \ref{thm:fluconeside}, replacing $a$ by $4\delta\sqrt{a}$ and noting that for sufficiently large $a$, $-g+2\delta\sqrt{a}>\delta\sqrt{a}$. The theorem yields
\begin{equation}\label{equation:eqB}
\sup_{v \in [-g+2, \delta \sqrt{a}]}
\Prob\left(\vtau_{4\delta \sqrt{a}} < \vtau_{-g+1} \mid V_0=v, H_0=0\right) 
\le Ce^{-C'a}.
\end{equation}
Combining \eqref{equation:eqB} and \eqref{equation:eqC} with \eqref{align:eqA}, we get
\begin{equation*}
\sup_{v \in [-g+2, \delta \sqrt{a}]}\Prob(\stau_{2a/g} < \vtau_{-g+1} \mid V_0=v, H_0=0) \le Ce^{-C'a}
\end{equation*}
which gives the upper bound in \eqref{equation:upperflucgap}.

\medskip

Finally, to prove the lower bound in \eqref{equation:upperflucgap}, note that for any $v \in [-g+2, \sqrt{a}]$,
\begin{align*}
\Prob&\left(\stau_{a/g} < \vtau_{-g+1}  \mid V_0=v, H_0=0\right)\\
& \ge \Prob\left(\vtau_{3\sqrt{a}} < \vtau_{-g+1}, \ \sup_{t \in \left[\vtau_{3\sqrt{a}}, \ \vtau_{-g+1}\right)}H_t\ge a/g  \mid V_0=v, H_0=0\right).
\end{align*}
By the strong Markov property applied at time $\vtau_{3\sqrt{a}}$ to the right hand side above and Remarks \ref{oc18.1} and \ref{oc16.1}, we get
\begin{align}\label{align:lb1}
\Prob&\left(\stau_{a/g} < \vtau_{-g+1} \mid V_0=v, H_0=0\right)\\
 & \ge \Prob\left(\vtau_{3\sqrt{a}}< \vtau_{-g+1} \mid V_0=v, H_0=0\right) 
\Prob\left(\stau_{a/g} < \vtau_{-g+1} \mid V_0=3\sqrt{a}, H_0=0\right). \nonumber
\end{align}
For $V_0=3\sqrt{a}$ and $H_0=0$, $S_{\sqrt{a}/g}
\geq 3\sqrt{a}(\sqrt{a}/g) - (g/2) (\sqrt{a}/g)^2  > 2a/g$. For $t \in [0,\sqrt{a}/g]$,
$$
V_t \ge V_0-gt \ge 3\sqrt{a}-\sqrt{a}=2\sqrt{a}.
$$
Therefore, taking $a$ sufficiently large so that $2\sqrt{a}>-g+1$, it follows that $\frac{\sqrt{a}}{g} < \vtau_{-g+1}$. Thus,
\begin{align*}
\Prob\left(\stau_{a/g} < \vtau_{-g+1} \mid V_0=3\sqrt{a}, H_0=0\right) \ge \Prob\left(B_{\sqrt{a}/g} < a/g\right) \ge \frac{1}{2}
\end{align*}
for sufficiently large $a$. This and \eqref{align:lb1} give us
\begin{align*}
\inf_{v \in [-g+2, \sqrt{a}]}
\Prob\left(\stau_{a/g} < \vtau_{-g+1} \mid V_0=v, H_0=0\right) 
& \ge \frac{1}{2}\inf_{v \in [-g+2, \sqrt{a}]}
\Prob\left(\vtau_{3\sqrt{a}}< \vtau_{-g+1} \mid V_0=v, H_0=0\right).
\end{align*}
We apply Theorem \ref{thm:fluconeside} replacing $a$ with $4\sqrt{a}$ and noting that for sufficiently large $a$, $-g+4\sqrt{a}>3\sqrt{a}$ and $-g+2\sqrt{a}>\sqrt{a}$. We obtain
\begin{equation*}
\inf_{v \in [-g+2, \sqrt{a}]}\Prob\left(\vtau_{3\sqrt{a}}< \vtau_{-g+1} \mid V_0=v, H_0=0\right) \ge Ce^{-C'a},
\end{equation*}
which, in view of the previous estimate, completes the proof of the lower bound in \eqref{equation:upperflucgap} and, hence, the proof of the theorem.
\end{proof}

\begin{remark}\label{oc31.2}
We will sketch an argument showing that
for any initial values $V_0=v$, $X_0=x$ and $S_0=y$, and any $z\in \Reals$ we have $\vtau_z < \infty$, a.s. 

In view of Remark \ref{oc31.1}, we can assume that $H_0=0$.
By Lemma \ref{lem:tube}, the process $V$ cannot stay in any bounded interval forever, a.s.
By Lemmas \ref{lem:hittimeest} and \ref{lem:hitup}, the probability that $V$ converges to $\infty$ or $-\infty$ is 0, a.s.

Suppose that  $\limsup _{t\to \infty} V_t = \infty$ and 
$\liminf_{t\to\infty} V_t$ is finite with positive probability. 
Let $a\in \Reals$ and $\eps>0$ be such that $\Prob(\liminf_{t\to\infty} V_t\in (a, a+1))>\eps $. The methods used in our proofs show easily that for some $p>0$, all $v\in(a,a+1)$, and all $x<y$, if $V_0 = v$, $S_0 = y$ and $X_0=x$ then $\vtau _{a-3} < \infty$ with probability greater than $p$. It is now standard to prove that on the event where $V$ visits $(a,a+1)$ infinitely often, it has to visit $(a-2, a-1)$ infinitely often as well, and, therefore, 
$\liminf_{t\to\infty} V_t$ cannot lie in $(a,a+1)$ with positive probability, a contradiction. A similar argument shows that the event that 
$\liminf _{t\to \infty} V_t = -\infty$ and 
$\limsup_{t\to\infty} V_t$ is finite has probability 0.
\end{remark}

\section{Renewal times}\label{sec:renewal}
We will define several sequences of stopping times {and derive tail estimates for them} that will help us estimate the fluctuations of the velocity process $V_t$ and the gap process $S_t-X_t$. {The path $\{Z_s: s \le t\}$ will be decomposed into \textit{cycles} between consecutive renewal times defined in this section. In Section \ref{sec:strong}, fluctuation results will be established for these cycles. These, in turn, will yield global fluctuation results and strong laws for $S_t$ and $X_t$ stated in Theorem \ref{thm:velfluc} and Theorem \ref{thm:stronglaw}.}

We assume that the starting configuration is $V_0=-g$ and $H_0=0$, although the results that follow will not depend on this choice. Fix $a_0>g$. We  define a sequence of \textit{renewal times} $\{\zeta_k\}_{k \ge 0}$ as follows. Let $\zeta_0=0$ and for $k \ge 0$,
\begin{align}\label{align:eta}
\eta_k &=\inf\{t \ge \zeta_k: |V_t+g|=a_0+2\},\\
\zeta_{k+1}&=\inf\{t \ge \eta_k: V_t=-g \text{ and } S_t=X_t\}.
\label{align:zeta}
\end{align}
In Lemma \ref{lem:estzeta}, we will 
prove that all these stopping times are finite, a.s. Moreover, we will show that the distribution of $\zeta_1$ has a rapidly decaying tail and thus has finite moments of all orders.

Define $\alpha_{-1}=0$. If $\vtau_{-g+a_0+2}< \vtau_{-g-a_0-2}$, define $\alpha_k=0$ for all $k \ge 0$ and let $N^-=0$. On the event $\{\vtau_{-g-a_0-2}< \vtau_{-g+a_0+2}\}$, define $\alpha_0=\vtau_{-g-a_0-2}$. For $k \ge 0$, if $V_{\alpha_{3k}}=-g-a_0-2$, then define
\begin{align}\label{align:alphastop}
\alpha_{3k+1}&=\inf\{t \ge \alpha_{3k}: S_t=X_t\},\nonumber\\
\alpha_{3k+2}&=\inf\{t \ge \alpha_{3k+1}: V_t=-g-a_0-1\},\nonumber\\
\alpha_{3k+3}&=\inf\{t \ge \alpha_{3k+2}: V_t=-g \text{ or } -g-a_0-2\}.
\end{align}
If $V_{\alpha_{3k}}=-g$, then define $\alpha_j=\alpha_{3k}$ for all $j \ge 3k$. . Define $N^-=\inf\{ k \ge 1: V_{\alpha_{3k}}=-g\}$. 
This corresponds to the first hitting of $-g$ by the velocity after time $\vtau_{-g-a_0-2}$.
By Remark \ref{oc16.1}, if $V_{\alpha_{3k}}=-g$ then $S_{\alpha_{3k}}= X_{\alpha_{3k}}$. 
Thus, on the event $\{\vtau_{-g-a_0-2}< \vtau_{-g+a_0+2}\}$, we have that $\zeta_1=\alpha_{3N^-}$.
Also, note that $V_{\alpha_{3k+1}} \leq -g-a_0-2$, and $H_{\alpha_{3k+2}}=0$ for $k<N^-$.

Define $\beta_{-1}=0$. If $\vtau_{-g-a_0-2}< \vtau_{-g+a_0+2}$, define $\beta_k=0$ for all $k \ge 0$ and let $N^+=0$. On the event $\{\vtau_{-g+a_0+2}< \vtau_{-g-a_0-2}\}$, define $\beta_0=\vtau_{-g+a_0+2}$. For $k \ge 0$, if $V_{\beta_{3k}}=-g+a_0+2$, then define
\begin{align*}
\beta_{3k+1}&=\inf\{t \ge \beta_{3k}: V_t=-g+a_0+1\},\nonumber\\
\beta_{3k+2}&=\inf\{t \ge \beta_{3k+1}: S_t=X_t \text{ or } V_t=-g\},\nonumber\\
\beta_{3k+3}&=\inf\{t \ge \beta_{3k+2}: V_t=-g \text{ or } -g+a_0+2\}.
\end{align*}
Otherwise, if $V_{\beta_{3k}}=-g$, define $\beta_j=\beta_{3k}$ for all $j \ge 3k$.
Define $N^+=\inf\{ k \ge 1: V_{\beta_{3k}}=-g\}$. This corresponds to the first down-crossing of the velocity below the level $-g$ after $\vtau_{-g+a_0+2}$. But with positive probability, $S_{\beta_{3N^+}}> X_{\beta_{3N^+}}$. Thus, to reach the renewal time $\zeta_1$, we will define a further set of stopping times $\{\talpha_k\}_{k \ge -1}$ till the first time the velocity hits $-g$ again from below and thus the processes $S$ and $X$ coincide.

If $\vtau_{-g-a_0-2}< \vtau_{-g+a_0+2}$, we define $\talpha_k=0$ for all $k \ge -1$ and we let $\tN^-=0$. On the event $\left\{\vtau_{-g+a_0+2}< \vtau_{-g-a_0-2}\right\}$, let
\begin{align*}
\talpha_{-1}&=\inf\{t \ge \beta_{3N^+}: S_t=X_t \text{ or } V_t=-g-a_0-2 \},\\
\talpha_{0}&=\inf\{t \ge \talpha_{-1}: V_t=-g \text{ or } -g-a_0-2\}.
\end{align*}
For $k \ge 0$, if $V_{\talpha_{3k}} = -g-a_0-2$, define $\talpha_i$ for $i=3k+1, 3k+2, 3k+3$ exactly as in \eqref{align:alphastop} replacing the $\alpha$'s with $\talpha$'s. If $V_{\talpha_{3k}} = -g$, define $\talpha_j=\talpha_{3k}$ for all $j \ge 3k$. Let $\tN^-=\inf\{ k \ge 0: V_{\talpha_{3k}}=-g\}$. Thus, on the event $\{\vtau_{-g+a_0+2}< \vtau_{-g-a_0-2}\}$, we have $\zeta_1=\talpha_{3\tN^-}$.

We will argue that all stopping times $\alpha_k$ are finite a.s.
First of all, by Lemma \ref{lem:tube},
\begin{align*}
\Prob\left(
\{\vtau_{-g-a_0-2}< \infty\}
\cup
\{\vtau_{-g+a_0+2} < \infty\}
\right)=1,
\end{align*}
so at least one of the sequences $\{\alpha_k\}$ or $\{\beta_k\}$ is non-trivial.
Suppose that $\alpha_{3k} < \infty$, a.s. Then
$\alpha_{3k+1} < \infty$, a.s., by Remark \ref{oc31.1}.
If $\alpha_{3k+1} < \infty$, a.s. then
$\alpha_{3k+2} < \infty$, a.s., by Remark \ref{oc31.2}.
Since  the argument in that remark was only sketched, note that 
we can alternatively apply Lemma \ref{lem:hittimeest} which has a 
detailed proof. Finally, if $\alpha_{3k+2} < \infty$, a.s. then
$\alpha_{3k+3} < \infty$, a.s., by Lemma \ref{lem:tube}.
A similar argument applies to $\beta_k$'s and $\talpha_k$'s.

\begin{lem}\label{lem:estzeta}
Suppose that $V_0=-g$ and $H_0=0$.
There exist constants $C, C'>0$ such that for all $t\geq 0$,
\begin{equation*}
\Prob(\zeta_1>t) \le Ce^{-C't^{1/2}}.
\end{equation*}
It follows that for any integer $n \ge 1$, $\Expect(\zeta_1^n) < \infty$.
\end{lem}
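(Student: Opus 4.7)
The idea is to decompose $\zeta_1$ into a short initial exit time $\eta_0$ and a random (but geometric-length) sum of cycle durations, each with an exponentially decaying tail. A union bound with $K = \lfloor\sqrt{t}\rfloor$ then converts this into the stated stretched-exponential tail $e^{-C' t^{1/2}}$.

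First, since $V_0 = -g \in [-g-a_0-2,\, -g+a_0+2]$ and $H_0 = 0$, Lemma~\ref{lem:tube} applied with $a = a_0+2$ yields $\Prob(\eta_0 > s) \le Ce^{-cs}$, which is much stronger than required; so it remains to control $\zeta_1 - \eta_0$. On the event $\{\vtau_{-g-a_0-2} < \vtau_{-g+a_0+2}\}$ this difference equals $\sum_{k=0}^{N^- - 1}(\alpha_{3(k+1)}-\alpha_{3k})$, while on the complementary event it breaks into $\sum_{k=0}^{N^+ - 1}(\beta_{3(k+1)}-\beta_{3k}) + (\talpha_{3\tN^-} - \beta_{3N^+})$, and the last summand is itself a finite sum over $\talpha$-cycles.

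Next, each sub-segment of an $\alpha$-cycle has an exponentially decaying tail conditional on its starting configuration, by the strong Markov property and Remark~\ref{oc18.1} combined with the estimates of Section~\ref{sec:estim}. The climb $\alpha_{3k+2} - \alpha_{3k+1}$ is controlled by Lemma~\ref{lem:hittimeest} (with $a_1 = 1$), using $H_{\alpha_{3k+1}} = 0$ by definition; the exit $\alpha_{3k+3} - \alpha_{3k+2}$ is controlled by Lemma~\ref{lem:tube}; the collision wait $\alpha_{3k+1} - \alpha_{3k}$ has an exponential tail by comparing the steeply decreasing parabola $S_t$ with the dominated $X_t$, in the spirit of the proof of Lemma~\ref{lem:unifcontrol}, using that $V_{\alpha_{3k}} \le -g-a_0-2$ forces a downward drift of $S$ at rate at least $a_0 + 2$. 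The $\beta$-sub-segments are handled symmetrically via Lemma~\ref{lem:hitup} and Lemma~\ref{lem:tube}, while the $\talpha$-phase is started from a random gap $H_{\beta_{3N^+}}$ whose tail is controlled by Theorem~\ref{thm:gaponeside}. Moreover each of $N^-$, $N^+$, $\tN^-$ is stochastically dominated by a Geometric$(p)$ variable for some $p > 0$: starting from $V = -g-a_0-1$, $H = 0$, the process reaches $-g$ before $-g-a_0-2$ with probability uniformly bounded below, which follows from the non-degeneracy arguments of Remark~\ref{oc31.2} together with Lemma~\ref{lem:tube}.

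Putting the pieces together, let $N$ denote the total number of sub-segments making up $\zeta_1 - \eta_0$ and let $\Delta_1,\ldots,\Delta_N$ be their successive durations. For $K = \lfloor\sqrt{t}\rfloor$, a union bound gives
\begin{align*}
\Prob(\zeta_1 - \eta_0 > t) \le \Prob(N > 3K) + 3K \sup_k \Prob(\Delta_k > t/(3K)) \le C(1-p)^K + C K e^{-c t / K},
\end{align*}
and with this choice of $K$ both terms are $O(e^{-c'\sqrt{t}})$. Combining with the exponential tail of $\eta_0$ yields the claim, and then $\Expect(\zeta_1^n) < \infty$ follows from $\Expect(\zeta_1^n) = n\int_0^\infty t^{n-1}\Prob(\zeta_1 > t)\,dt$. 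The main technical obstacle lies in the collision-wait segments $\alpha_{3k+1} - \alpha_{3k}$ and $\talpha_{-1} - \beta_{3N^+}$: these begin from a random configuration with a random gap, and obtaining a uniform exponential tail requires combining a tail bound for the gap itself (via Theorem~\ref{thm:gaponeside}) with a parabola-versus-Brownian-motion comparison controlling the meeting time given the gap.
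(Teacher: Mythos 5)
Your high-level plan matches the paper's: decompose $\zeta_1$ into $\alpha$/$\beta$/$\talpha$-cycles, show the number of cycles has a geometric tail, show each cycle segment has an exponential tail, and then apply a union bound with $K \asymp \sqrt{t}$ segments of length $\asymp\sqrt{t}$ to get the stretched-exponential $e^{-C'\sqrt{t}}$. The geometric-tail arguments for $N^-, N^+, \tN^-$ (via hitting-probability comparisons with $L^{(m)}$) and the treatment of the $\alpha_{3k+2}-\alpha_{3k+1}$ climbs via Lemma~\ref{lem:hittimeest} and the $\alpha_{3k+3}-\alpha_{3k+2}$ exits via Lemma~\ref{lem:tube} also match.

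The main gap is in how you propose to handle the collision-wait segments $\alpha_{3k+1}-\alpha_{3k}$ (and the analogous $\talpha_{1}-\talpha_0$). You suggest first bounding the tail of $H_{\alpha_{3k}}$ via Theorem~\ref{thm:gaponeside} and then doing a parabola-vs-Brownian comparison given the gap. But Theorem~\ref{thm:gaponeside} only controls $\Prob(\stau_a < \vtau_{-g-1}\mid H_0=0)$, and $\vtau_{-g-1}$ (restarted at $\alpha_{3k-1}$, where $V=-g-a_0-1<-g-1$) can occur strictly before $\alpha_{3k}$, so this does not yield a bound on $H_{\alpha_{3k}}$ uniformly over the relevant event; moreover Theorem~\ref{thm:gaponeside} was not designed for, and does not directly give, a tail bound for the gap at the random time $\vtau_{-g-a_0-2}$. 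The paper sidesteps this entirely: it does not bound the gap at $\alpha_{3k}$ at all. Instead it returns to the preceding time $\alpha_{3k-1}$ where, by Remark~\ref{oc16.1}, the configuration is the \emph{known} one $(V,H)=(-g-a_0-1,0)$, and then applies Lemma~\ref{lem:unifcontrol} (which uniformly controls $\sigma(u)-u$ over all $u$ up to an explicit stopping time) \emph{jointly} with the already-established bound on $\alpha_{3k}-\alpha_{3k-1}$ (see \eqref{align:b1}--\eqref{align:b2} and \eqref{equation:estdip}). The same device (Lemma~\ref{lem:unifcontrol} started from a known configuration, combined with a velocity bound from Theorem~\ref{thm:fluconeside}, not the gap theorem) is used for the $\talpha$-phase in \eqref{align:downest}--\eqref{equation:talphagap}. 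You flagged the right obstacle, but the cited tool is the wrong one and the estimate would need to be reworked along the paper's lines to close the argument.
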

\begin{proof}
In this proof, we  will  assume that $t$ is sufficiently large without explicitly mentioning it every time.
 
First, we consider the event $\left\{\vtau_{-g-a_0-2}< \vtau_{-g+a_0+2}\right\}$. Write 
\begin{align*}
p^-=\Prob\left(\vtau_{-g-a_0-2}<\vtau_{-g} \mid V_0=-g-a_0-1, H_0=0\right).
\end{align*}

Note that if $V_0=-g-a_0-1, H_0=0$,
then for $t \le \vtau_{-g}$, we have $L_t \ge L^{(-g)}_t$. 
Note that
\begin{align*}
\Prob\left(\inf\{t\geq 0: L^{(-g)}_t-gt = -1\} \leq 1/g\right)
\leq
\Prob\left( \sup_{t \le 1/g}(B_t+gt) - g(1/g) < -1 \right) = 0.
\end{align*}
This implies that
\begin{align*}
p^- & \le \Prob\left(\inf\{t\geq 0: L^{(-g)}_t-gt = -1\} < 
\inf\{t\geq 0: L^{(-g)}_t-gt = a_0+1\}\right)\\
& \le \Prob\left( \sup_{t \le 1/g}(B_t+gt) - g (1/g) \le a_0+1\right)<1.
\end{align*}
Hence, for any integer $n \ge 1$, we can apply the strong Markov property successively at $\alpha_{3n-1}, \alpha_{3n-4}, \dots$ to get
\begin{equation}\label{equation:estN}
\Prob(N^->n) \le (p^-)^n.
\end{equation}
For $t>0$ and $n \ge 1$,
\begin{align} \label{align:nless}
\Prob\left(1 \le N^- \le n, \sup_{0 \le k \le N^- -1}(\alpha_{3(k+1)}-\alpha_{3k}) > 3t\right) &\le \sum_{k=0}^{n-1}\Prob\left(\alpha_{3(k+1)}-\alpha_{3k} > 3t, \ N^- \ge k+1\right).
\end{align}
For all $k $ and $t \in [\alpha_{3k-1}, \alpha_{3k}]$, we have $V_t \in [-g-a_0-2,-g+a_0+2]$.
Therefore, by the strong Markov property applied at $\alpha_{3k-1}$, and Lemma \ref{lem:tube}, there exist constants $C>0$ and $p_0 \in (0,1)$ such that for any integer $m \ge 1$,
\begin{align}\label{align:b1}
&\Prob(\alpha_{3k}  -\alpha_{3k-1} >Cm, \ N^- \ge k+1)
\leq
\Prob(\alpha_{3k}  -\alpha_{3k-1} >Cm)
\nonumber\\
& \le \Prob(|V_s +g| \le a_0+2 \text{ for all } s \in [0, Cm] \mid V_0=-g-a_0-1, H_0=0) \le p_0^m.
\end{align}

If $V_{\alpha_{3k-3}} = -g$ then $\alpha_{3k+1}-\alpha_{3k}=0$. 
If $V_{\alpha_{3k-3}} = -g-a_0-2$ then
$V_{\alpha_{3k-1}} = -g-a_0-1$ and, by Remark \ref{oc16.1}, $S_{\alpha_{3k-1}} = X_{\alpha_{3k-1}}$.
These remarks and the strong Markov property applied at time $\alpha_{3k-1}$ 
show that for any $\delta>0$ and sufficiently large $t$,
\begin{align}\label{align:b2}
&\Prob(\alpha_{3k+1}-\alpha_{3k}>\delta t^{1/3}, \alpha_{3k}-\alpha_{3k-1} \le t, \ N^- \ge k+1)\nonumber\\
&\leq
\Prob(\alpha_{3k+1}-\alpha_{3k}>\delta t^{1/3}, \alpha_{3k}-\alpha_{3k-1} \le t)
\nonumber\\
&= \Prob(\sigma(\alpha_{3k})-\alpha_{3k} > \delta t^{1/3}, \alpha_{3k}-\alpha_{3k-1} \le t)\nonumber\\
& \le  \Prob\left(\sigma(u) > u +\delta t^{1/3} \text{ for some } u \le t \wedge \vtau_{-g-a_0-2} \wedge \vtau_{-g+a_0+2} \mid V_0= -g-a_0-1, H_0=0\right)\nonumber\\
& \le Ce^{-C't}.
\end{align}
The last estimate follows from Lemma \ref{lem:unifcontrol} by applying it with $a =\sqrt{g}t^{1/3}/3$  and $m=3$.
We combine \eqref{align:b1}, taking $m=t/C$ there, and \eqref{align:b2}, to obtain, 
\begin{equation}\label{equation:estdip}
\Prob(\alpha_{3k+1}-\alpha_{3k}>\delta t^{1/3}, \ N^- \ge k+1) \le Ce^{-C't}.
\end{equation}
We claim that for sufficiently large $t$,
\begin{align}\label{align:estup}
\Prob(\alpha_{3k+2}&-\alpha_{3k+1}>t, \alpha_{3k+1}-\alpha_{3k} \le \delta t^{1/3}, \ N^- \ge k+1)\nonumber\\
&\le \sup_{v \in [a_0+2, a_0+2+g\delta t^{1/3}]} \Prob(\vtau_{-g-a_0-1} >t \mid V_0=-g-v, H_0=0)\nonumber\\
&\le \sup_{v \in [a_0+2, a_0+2+g\delta t^{1/3}]}\frac{4(v-a_0-1)}{((v-a_0-1) +(a_0+1)t)(a_0+1)\sqrt{2\pi t}}e^{-(a_0+1)^2t/8} \le Ce^{-C't}.
\end{align}
The first inequality follows from the strong Markov property applied at $\alpha_{3k+1}$. For the second inequality, we apply Lemma \ref{lem:hittimeest} with $a_1=a_0+1$ and $a_2=v$. Note that as $\frac{2(v-a_0-1)}{(a_0+1)} \le \frac{2(1+g\delta t^{1/3})}{(a_0+1)} < t$ for sufficiently large $t$, the hypotheses of Lemma \ref{lem:hittimeest} are satisfied.

By Lemma \ref{lem:tube},
\begin{equation}\label{equation:esttube}
\Prob(\alpha_{3k+3}-\alpha_{3k+2}>t, \ N^- \ge k+1) \le 
\Prob(\alpha_{3k+3}-\alpha_{3k+2}>t) \le Ce^{-C't}.
\end{equation}
From \eqref{equation:estdip}, \eqref{align:estup} and \eqref{equation:esttube}, we get
\begin{equation*}
\Prob\left(\alpha_{3(k+1)}-\alpha_{3k} > 3t, \ N^- \ge k+1\right) \le Ce^{-C't}.
\end{equation*}
Substituting this into \eqref{align:nless}, we obtain
\begin{equation}\label{equation:n1}
\Prob\left(1 \le N^- \le t, \sup_{0 \le k \le N^- -1}(\alpha_{3(k+1)}-\alpha_{3k}) > 3t\right) \le Cte^{-C't}.
\end{equation}

Recall that if $\vtau_{-g-a_0-2}<\vtau_{-g+a_0+2}$ then $\zeta_1=\alpha_{3N^-}$. Thus, using \eqref{equation:estN} and \eqref{equation:n1},
\begin{align*}
\Prob\left(\zeta_1>3t^2, \vtau_{-g-a_0-2}<\vtau_{-g+a_0+2}\right) &= \Prob(\alpha_{3N^-}>3t^2, \ N^- \ge 1)\\
& \le \Prob(N^- > t) + \Prob\left(\sum_{k=0}^{N^- -1}(\alpha_{3(k+1)}-\alpha_{3k})>3t^2, 1 \le N^- \le t\right)\\
& \le \Prob(N^- > t) + \Prob\left(1 \le N^- \le t, \sup_{0 \le k \le N^- -1}(\alpha_{3(k+1)}-\alpha_{3k}) > 3t\right)\\
& \le (p^-)^t + Cte^{-C't}.
\end{align*}
After readjustment of constants we get
\begin{equation}\label{equation:thetaminus}
\Prob\left(\zeta_1>t^2, \vtau_{-g-a_0-2}<\vtau_{-g+a_0+2}\right) \le Ce^{-C't}.
\end{equation}
We record a related estimate for later use. Note that  
$\inf\{t\geq \alpha_2: V_t = -g\} \leq \zeta_1$.
Hence our argument proving \eqref{equation:thetaminus} also shows
\begin{equation}\label{equation:hitg}
\Prob(\vtau_{-g} >t^2 \mid V_0=-g-a_0-1, H_0=0) \le Ce^{-C't}.
\end{equation}

\medskip

Next consider the event $\left\{\vtau_{-g+a_0+2}< \vtau_{-g-a_0-2}\right\}$. Note that if we start with $V_0=v \in [-g, -g+a_0+1]$ and $H_0=0$, then for $t \le \vtau_{-g}$, it holds that $L_t \le L^{(-g)}_t$. Therefore, for some $p^+ < 1$,
\begin{align*}
\inf_{v \in [-g, -g+a_0+1]}
&\Prob(\vtau_{-g} < \vtau_{-g+a_0+2} \mid V_0=v, H_0=0)\\
&\geq \Prob\left(\inf\{t\geq 0: L^{(-g)}_t-gt = 1\} > 
\inf\{t\geq 0: L^{(-g)}_t-gt = -a_0-1\}\right)\\
& \ge \Prob\left(\sup_{t \le 2(a_0+1)/g}(B_t+gt)<1\right)
 =1-p^+  >0.
\end{align*}
Therefore, for any integer $n \ge 1$, we can apply the strong Markov property successively at times $\beta_{3n-1}, \beta_{3n-4}, \dots$ to get
\begin{equation}\label{equation:estNplus}
\Prob(N^+>n) \le \left(\sup_{v \in [-g, -g+a_0+1]}\Prob(\vtau_{-g} > \vtau_{-g+a_0+2} \mid V_0=v, H_0=0)\right)^n \le (p^+)^n.
\end{equation}
As before, we can write
\begin{align} \label{align:nplusless}
\Prob\left(1 \le N^+ \le n, \sup_{0 \le k \le N^+ -1}(\beta_{3(k+1)}-\beta_{3k}) > 3t\right) &\le \sum_{k=0}^{n-1}\Prob\left(\beta_{3(k+1)}-\beta_{3k} > 3t, \ N^+ \ge k+1\right).
\end{align}
Let $0 \le k \le n-1$. By Lemma \ref{lem:hitup}, for large $t$, 
\begin{equation}\label{equation:upone}
\Prob(\beta_{3k+1}-\beta_{3k} > t, \ N^+\ge k+1) \le \Prob(\beta_{3k+1}-\beta_{3k} > t) \le Ce^{-C't}.
\end{equation}
For $N^+ \ge k+1$, we have
$$
-g<V_{\beta_{3k+2}}=-g+a_0+1-g(\beta_{3k+2}-\beta_{3k+1})
$$
yielding
\begin{equation}\label{equation:uptwo}
\beta_{3k+2}-\beta_{3k+1}<\frac{a_0+1}{g}.
\end{equation}
By Lemma \ref{lem:tube},
\begin{equation}\label{equation:upthree}
\Prob(\beta_{3k+3}-\beta_{3k+2}>t, \ N^+ \ge k+1) \le
\Prob(\beta_{3k+3}-\beta_{3k+2}>t) \le Ce^{-C't}.
\end{equation}
Combining \eqref{align:nplusless}, \eqref{equation:upone}, \eqref{equation:uptwo} and \eqref{equation:upthree}, we get
\begin{equation*}
\Prob\left(1 \le N^+ \le t, \sup_{0 \le k \le N^+ -1}(\beta_{3(k+1)}-\beta_{3k}) > 3t\right) \le Ct e^{-C't}.
\end{equation*}
This and \eqref{equation:estNplus}
can be combined as in the proof of
 \eqref{equation:thetaminus} to show that for large $t$, 
\begin{equation}\label{equation:thetaplusup}
\Prob(\beta_{3N^+}>t^2, \vtau_{-g+a_0+2}<\vtau_{-g-a_0-2}) \le Ce^{-C't}.
\end{equation}
The following estimate, needed later in the paper, can be derived
just like the last estimate:
\begin{equation}\label{no8.1}
\Prob(\vtau_{-g} >t^2 \mid V_0=-g+a_0+1, H_0=0) \le Ce^{-C't}.
\end{equation}

\medskip

Next we will estimate the remaining time $\zeta_1-\beta_{3N^+}$ before renewal happens.
First consider the event $\{N^+ \ge 1, \ \tN^-=0\}$ where the first renewal time $\zeta_1$ is reached before the velocity hits level $-g-a_0-2$. Under this event, there are the following two possibilities. If $S_{\beta_{3N^+}}=X_{\beta_{3N^+}}$, then $\zeta_1=\beta_{3N^+}$. Otherwise, $V_{\talpha_{-1}} \in (-g-a_0-2,-g)$, $V_{\talpha_0}=-g$ and $\zeta_1=\talpha_0$. As the inert particle falls freely in the time interval $[\beta_{3N^+}, \talpha_{-1}]$, we have
\begin{equation}\label{equation:e1}
\talpha_{-1}-\beta_{3N^+} \le (a_0+2)/g.
\end{equation}
By the strong Markov property applied at $\talpha_{-1}$ and Lemma \ref{lem:tube},
\begin{equation}\label{equation:e2}
\Prob(\talpha_0-\talpha_{-1}>t, \ N^+ \ge 1, \ \tN^-=0) \le Ce^{-C't}.
\end{equation}
From \eqref{equation:thetaplusup}, \eqref{equation:e1} and \eqref{equation:e2}, it  follows that
\begin{equation}\label{equation:thetaplus1}
\Prob(\zeta_1>t^2, \ N^+ \ge 1, \ \tN^-=0) \le Ce^{-C't}.
\end{equation}

We will next address the case when $\tN^- \ge 1$. Note that this implies that $N^+\ge 1$. In this case the velocity $V$ first reaches level $-g+a_0+2$ and then $-g-a_0-2$ before the renewal time $\zeta_1$. Under this event, $V_{\talpha_0}=-g-a_0-2$ and $\zeta_1=\talpha_{3\tN^-}$. We need to control $\talpha_1-\talpha_0$ and $\talpha_2-\talpha_1$. At $\talpha_2$, we have $V_{\talpha_2}=-g-a_0-1$ and $S_{\talpha_2}=X_{\talpha_2}$ and we can apply the same analysis as in the case of the event $\left\{\vtau_{-g-a_0-2}< \vtau_{-g+a_0+2}\right\}$, replacing $\alpha$'s with $\talpha$'s and $N^-$ with $\tN^-$.

Note that for $\tN^- \ge 1$, $\talpha_0=\vtau_{-g-a_0-2}$ and $\talpha_1=\sigma(\vtau_{-g-a_0-2})$. For any $\delta, \eps>0$, we have
\begin{align}\label{align:downest}
&\Prob\left(\talpha_1-\talpha_0>\delta \sqrt{t}, \ \tN^- \ge 1 \right)\\
&\le \Prob\left(\talpha_1-\talpha_0>\delta \sqrt{t}, \ \talpha_0 \le \vtau_{\eps \sqrt{t}}  \wedge t^2, \ \tN^- \ge 1\right)
 + \Prob(\vtau_{\eps\sqrt{t}}< \talpha_0,  N^+ \ge 1) + \Prob(\talpha_0>t^2, \ \tN^- \ge 1).\nonumber
\end{align}

If we take $C_0 =1$, $a=\sqrt{t}$ and $m=4$ in Lemma \ref{lem:unifcontrol}
then we obtain for some $C_1$ and $C_2$, and all $t>0$,
\begin{equation*}
\Prob\left(\sigma(s) > s+3\frac{\delta \sqrt{t}}{\sqrt{g}} \text{ for some } 
s \le  t^2 \wedge \vtau_{-g+\sqrt{g}\delta \sqrt{t}/4}\wedge \vtau_{-g-\delta^2 t/8}
\mid V_0=0, H_0=0
\right) \le C_1 t^{3/2} e^{-C_2 t^{3/2}}.
\end{equation*}
Since $\talpha_0=\vtau_{-g-a_0-2}$, we have $\talpha_0\leq\vtau_{-g-\delta^2 t/8}$ for large $t$. Adjusting the values of the constants in the last estimate, we obtain that for any $\delta>0$, we can find $\eps>0$ such that
for sufficiently large $t$,
\begin{equation}\label{equation:sigmabeta}
\Prob\left(\talpha_1-\talpha_0>\delta \sqrt{t}, \ \talpha_0 \le \vtau_{\eps \sqrt{t}}  \wedge t^2, \ \tN^- \ge 1\right) \le Ct^{3/2}e^{-C't^{3/2}}.
\end{equation}

It follows from Theorem \ref{thm:fluconeside} that for sufficiently large $a>0$. 
\begin{align}\label{no3.1}
\Prob(\vtau_{a}<\vtau_{-g+1}\mid V_0=-g+a_0+2, H_0=0) \le Ce^{-C'a^2}.
\end{align}
Since Remark \ref{oc16.1} implies that $S_{\beta_{3k}} = X_{\beta_{3k}}$,
we can apply this inequality at $t=\beta_{3k}$, by the strong Markov property at $\beta_{3k}$. Note that $\sup_{s \in [\beta_{3k+1}, \beta_{3k+3}]} V_s
\leq -g+a_0+2$ so for large $a$,
$$
\left\{\sup_{s \in [\beta_{3k}, \beta_{3k+3}]} V_s >a \right\} = \left\{\sup_{s \in [\beta_{3k}, \beta_{3k+1}]} V_s >a \right\}.
$$
By  definition, 
$\beta_{3k+1} < \inf\{s\geq \beta_{3k}: V_s = -g+1\}$. These remarks
and \eqref{no3.1} imply that, for large $a$,
\begin{equation*}
\Prob\left(\sup_{s \in [\beta_{3k}, \beta_{3k+3}]} V_s >a, \ N^+\ge k+1 \right)
=\Prob\left(\sup_{s \in [\beta_{3k}, \beta_{3k+1}]} V_s >a, \ N^+\ge k+1 \right) \le Ce^{-C'a^2}.
\end{equation*}
This and \eqref{equation:estNplus} show that
 for  sufficiently large integer $a>0$,
\begin{align}
\Prob(\vtau_a< \beta_{3N^+}, \ N^+ \ge 1) &\le \sum_{k=0}^{a^2-1}\Prob\left(\sup_{s \in [\beta_{3k}, \beta_{3k+3}]} V_s >a, \ N^+\ge k+1 \right) + \Prob(N^+ > a^2) \nonumber \\
& \le Ca^2 e^{-C'a^2} + (p^+)^{a^2}\nonumber \\
& \le Ce^{-C'a^2}.\label{equation:uptillg}
\end{align}
Recall that on the event $\{N^+ \ge 1\}$, $V_u \in [-g,-g-a_0-2]$ for all $u \in [\beta_{3N^+},\talpha_0]$. Therefore, for $a>-g$, $$
\Prob(\vtau_a< \talpha_0, \ N^+ \ge 1)=\Prob(\vtau_a< \beta_{3N^+}, \ N^+ \ge 1).
$$
Thus, \eqref{equation:uptillg} with $a=\eps\sqrt{t}$  gives us, for sufficiently large $t>0$,
\begin{equation}\label{equation:uptillgt}
\Prob(\vtau_{\eps\sqrt{t}}< \talpha_0 , \ N^+ \ge 1) \le Ce^{-C'\eps^2t}.
\end{equation}

To estimate the last probability in \eqref{align:downest}, note that under the event $\{\tN^- \ge 1\}$,
$V_{\talpha_{-1}} \in [-g-a_0-2,-g)$ and $V_{\talpha_0}=-g-a_0-2$.  
Using \eqref{equation:thetaplusup}, \eqref{equation:e1} and \eqref{equation:e2}, we get
\begin{equation}\label{equation:talpha}
\Prob(\talpha_0>t^2, \ \tN^- \ge 1) \le Ce^{-C't}.
\end{equation}
Combining the estimates \eqref{equation:sigmabeta}, \eqref{equation:uptillgt} and \eqref{equation:talpha} with \eqref{align:downest}, we see that for any $\delta>0$ there is $t_0>0$ such that for $t \ge t_0$,
\begin{equation}\label{equation:talphagap}
\Prob\left(\talpha_1-\talpha_0>\delta \sqrt{t}, \ \tN^- \ge 1 \right) \le Ce^{-C't},
\end{equation}
where $C,C'$ may depend on $\delta$.

\medskip

We next control $\talpha_2-\talpha_1$. Write
\begin{equation}\label{equation:talphatwo}
\Prob(\talpha_2-\talpha_1 > t, \ \tN^- \ge 1) \le \Prob(\talpha_2-\talpha_1 > t, \talpha_1-\talpha_0 \le  \sqrt{t}, \ \tN^- \ge 1) + \Prob\left(\talpha_1-\talpha_0> \sqrt{t}, \ \tN^- \ge 1 \right).
\end{equation}
Applying the strong Markov property at $\talpha_1$ and then
Lemma \ref{lem:hittimeest}, we obtain for large $t$,
\begin{align}\label{align:talphathree}
\Prob&\left(\talpha_2-\talpha_1 > t, \talpha_1-\talpha_0 \le \sqrt{t}, \ \tN^- \ge 1\right)\nonumber\\
&\le \sup_{v \in [a_0+2, a_0+2+g\sqrt{t}]}\Prob(\vtau_{-g-a_0-1} >t \mid V_0=-g-v, H_0=0)\nonumber\\
& \le \sup_{v \in [a_0+2, a_0+2+g\sqrt{t}]}\frac{4(v-a_0-1)}{((v-a_0-1) +(a_0+1)t)(a_0+1)\sqrt{2\pi t}}e^{-(a_0+1)^2t/8} \le Ce^{-C't}.
\end{align}
Note that the hypotheses of Lemma \ref{lem:hittimeest}  hold 
because $t\ge 2(1+g\sqrt{t})/(a_0+1)$ for large $t$.

Substituting the estimates obtained in \eqref{equation:talphagap} (with $\delta=1$) and \eqref{align:talphathree} into \eqref{equation:talphatwo}, we get
\begin{equation}\label{equation:talphafour}
\Prob(\talpha_2-\talpha_1 > t, \ \tN^- \ge 1) \le Ce^{-C't}.
\end{equation}
The strong Markov property applied at $\talpha_2$ and \eqref{equation:hitg} imply that
$$
\Prob(\talpha_{3\tN^-} -\talpha_2>t^2, \ \tN^- \ge 1) \le \Prob(\vtau_{-g} > t^2 \mid V_0=-g-a_0-1, H_0=0) \le Ce^{-C't}.
$$
Thus,
\begin{align}\label{align:thetaone}
\Prob(\zeta_1-\talpha_2 >t^2, \ \tN^- \ge 1) &= \Prob(\talpha_{3\tN^-} -\talpha_2>t^2, \ \tN^- \ge 1) \le Ce^{-C't}.
\end{align}
Combining  \eqref{equation:talpha}, \eqref{equation:talphagap}, \eqref{equation:talphafour} and \eqref{align:thetaone}, we get
\begin{equation}\label{equation:thetaplus2}
\Prob(\zeta_1 >t^2, \ \tN^- \ge 1) \le Ce^{-C't}.
\end{equation}
From \eqref{equation:thetaplus1} and \eqref{equation:thetaplus2}, we obtain
\begin{equation} \label{equation:thetaplus}
\Prob(\zeta_1>t^2, \vtau_{-g+a_0+2}< \vtau_{-g-a_0-2}) \le Ce^{-C't}.
\end{equation}
The lemma follows from \eqref{equation:thetaminus} and \eqref{equation:thetaplus}.
\end{proof}

\section{The stationary distribution for $Z=(V,S-X)$}\label{sec:station}

We will use results from \cite{thorisson}, \cite{KaR14} to establish existence of a unique stationary distribution for $Z=(V, S-X)$ and convergence of $Z$ to the stationary distribution in total variation distance. For this reason, we will refer the reader to the book \cite{thorisson} even for definitions of widely use terms.
For example, the definition of the total variation distance can be found in 
\cite[Ch.~1, Sec.~5.3]{thorisson}. {We would like to point out that an alternative proof of  the existence  of the stationary distribution is contained in Corollary \ref{cor:stationary} and the calculations  in the proof of Theorem \ref{thm:stat}. Hence, the arguments in this section are mostly needed for the proof of uniqueness and convergence in total variation distance.}

The process $Z=(V, S-X)$ is a classical regenerative processes
(see \cite[Ch.~10, Sec.~3]{thorisson})
 with regeneration times $\zeta_k$ defined in Section \ref{sec:renewal}. This means that the process starts afresh at each $\zeta_k$ and the cycles of the process $Z$ given by
$
\mathcal{C}_k=(Z_t)_{t \in [\zeta_{k-1}, \zeta_k)}
$
for $k \ge 1$ form an i.i.d sequence. The gaps $\{\zeta_{k+1}-\zeta_k, k \ge 0\}$ between the regeneration times are called \textit{inter-regeneration times}. If the process starts from some  $z\in \mathbb{H}:= \Reals \times [0,\infty)$ then we will write $\Prob_z$ and $\EE_z$ for the law of $Z$  and the corresponding expectation. If we are not assuming that $Z$ starts from the renewal state, it will be convenient to redefine $\zeta_0$ as
\begin{align}\label{no12.1}
\zeta_0=\inf\{t \ge 0: Z_t=(-g,0)\}
\end{align}
and then shift the definition of the sequence $\zeta_k$ by $\zeta_0$.  The  arguments used in the proof of Lemma \ref{lem:estzeta} show that $\Prob(\zeta_0< \infty)=1$.
We will use $\Prob_R$ and $\Expect_R$ to denote probability and expectation when $Z$ starts from the renewal state, i.e., $Z_0=(-g,0)$.
The following lemma will be used in the proof of Theorem \ref{thm:stationary}.

\begin{lem}\label{lem:density}
The inter-regeneration time $\zeta_1-\zeta_0$ has a density with respect to Lebesgue measure.
\end{lem}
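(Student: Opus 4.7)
The plan is to establish absolute continuity of $\zeta_1-\zeta_0$ by combining a strong Markov decomposition with a Girsanov change of measure. Since $Z_{\zeta_0}=(-g,0)$ is deterministic by the definition \eqref{no12.1} and $\Prob(\zeta_0<\infty)=1$, the strong Markov property at $\zeta_0$ reduces matters to showing that, starting from $Z_0=(-g,0)$, the random time $\zeta_1$ has a density. Introduce the intermediate stopping time
\begin{align*}
T := \vtau_{-g+1} \wedge \vtau_{-g-1},
\end{align*}
so that $T \le \eta_0 \le \zeta_1$ and, crucially, $V_t\in[-g-1,-g+1]$ is uniformly bounded on $[0,T]$. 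Decompose $\zeta_1 = T + (\zeta_1-T)$; by the strong Markov property at $T$ the remainder $\zeta_1-T$ is a measurable function of $Z_T$ and the Brownian increments after $T$, hence is conditionally independent of $T$ given $Z_T$. Therefore, if the conditional law of $T$ given $Z_T$ is absolutely continuous for a.e.\ realization of $Z_T$, then integrating against the joint law of $(Z_T,\zeta_1-T)$ yields a density for $\zeta_1$.

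To obtain the conditional density of $T$, I would apply Girsanov's theorem on $\mathcal{F}_T$ with the density
\begin{align*}
\mathcal{E}_T = \exp\left(\int_0^T V_s\, dB_s - \tfrac12\int_0^T V_s^2\, ds\right).
\end{align*}
Since $|V_s|\le g+1$ on $[0,T]$, Novikov's condition holds trivially. Under the equivalent measure $\wt{\Prob}$ on $\mathcal{F}_T$, the process $\wt B_t := B_t - \int_0^t V_s\, ds$ is a standard Brownian motion, and substituting into \eqref{eq:z} shows that $H_t$ becomes a standard reflecting Brownian motion starting at $0$ with boundary local time $L$, while $V_t = -g + L_t - gt$. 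Thus
\begin{align*}
T \,=\, \inf\{t>0 : L_t - gt \in \{-1,1\}\}
\end{align*}
under $\wt{\Prob}$, i.e., the first exit of $L_t - gt$ from $(-1,1)$. Via Levy's identity $L_t \stackrel{d}{=} \max_{s\le t} \wt B_s$, this coincides in law with the first exit of $\max_{s\le t} \wt B_s - gt$ from $(-1,1)$, which is absolutely continuous by standard computations from the explicit joint law of $(\wt B_t, \max_{s\le t}\wt B_s)$. Since the Girsanov change of measure is equivalent on $\mathcal{F}_T$, absolute continuity transfers back to $\Prob$. On the event $\{V_T=-g+1\}$ we have $Z_T=(-g+1,0)$ deterministically by Remark \ref{oc16.1}, and on $\{V_T=-g-1\}$ the argument proceeds after additionally conditioning on $H_T$.

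The main obstacle is verifying that the first exit time of $\max_{s\le t}\wt B_s - gt$ from $(-1,1)$ has an absolutely continuous distribution, not merely no atoms. This is a concrete question about a classical Brownian functional: the survival probability equals $\Prob(gs - 1 < \max_{u\le s}\wt B_u < gs + 1 \text{ for all } s \le t)$, which can be expressed via heat-kernel integrals on an appropriate region in the $(\wt B_t, \max_{s\le t}\wt B_s)$-plane, and standard smoothness estimates for such Gaussian integrals give the required differentiability in $t$. Once this is in hand, the strong Markov decomposition above completes the proof.
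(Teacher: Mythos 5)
Your framework — decompose $\zeta_1 = T + (\zeta_1-T)$, exploit the conditional independence of the two pieces given $Z_T$, and reduce the first piece to a Brownian functional via Girsanov — is a reasonable template, but with your choice $T=\vtau_{-g+1}\wedge\vtau_{-g-1}$ and the decision to put the density burden on the \emph{first} leg, there is a real gap. Your convolution argument requires the \emph{conditional} law of $T$ given $Z_T$ to be absolutely continuous. On $\{V_T=-g+1\}$ this is fine: $Z_T=(-g+1,0)$ is deterministic by Remark~\ref{oc16.1}, and the exit at $+1$ is the first passage of $\wt B$ across the line $gt+1$, whose density is given by \eqref{oc13.1}. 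But on $\{V_T=-g-1\}$ the gap variable $H_T=M_T-\wt B_T$ is a nondegenerate random quantity, and conditioning on it is not the same as conditioning on the event. You would need the joint law of $(T,H_T)$ on this event to be absolutely continuous in both coordinates and then disintegrate; the line ``the argument proceeds after additionally conditioning on $H_T$'' is exactly where the work lies and it is not done. Note also that even the \emph{unconditional} absolute continuity of $\vtau_{-g-1}$, i.e.\ the first time $M_t-gt$ reaches $-1$ where $M$ is a running maximum increasing on a Lebesgue-null set, is not immediate and you only sketch it; and the claim that Novikov holds trivially because $|V_s|\le g+1$ on $[0,T]$ is not correct, since $T$ is unbounded and the tail bound $\Prob(T>t)\le e^{-ct}$ from Lemma~\ref{lem:tube} does not guarantee $c>(g+1)^2/2$ (this is repairable by working on $\mathcal F_{T\wedge n}$ and letting $n\to\infty$, but as written it is a leap).

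For comparison, the paper puts the density burden on the \emph{last} leg: it takes $T'=\inf\{t\ge\eta_0:V_t\le-g,\ S_t=X_t\}$, so that $H_{T'}=0$ automatically and $\zeta_1=T'+\vtau_{-g}\circ\theta_{T'}$, and then establishes via Girsanov that $\vtau_{-g}$ started from $(v,0)$ with $v<-g$ has a density — a clean one-dimensional first-passage computation with no conditioning on a random gap. The price the paper pays is that this degenerates when $V_{T'}=-g$ (the last leg contributes an atom at $0$), which forces them to prove $\Prob_R(V_{T'}=-g)=0$ via the ladder-height/irregularity argument from L\'evy process theory. Your approach seems to sidestep that, but only by trading it for the unresolved two-dimensional conditional-density question on $\{V_T=-g-1\}$, which is arguably at least as hard and is not addressed in the proposal.
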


\begin{proof}

Recall $\eta_0$  defined in \eqref{align:eta} and the notation $\HH=\Reals\times\Reals_+$.

By the strong Markov property applied at $\zeta_0$, for any $z \in \HH$ and any measurable set $\A  \subseteq [0, \infty)$, $\Prob_z(\zeta_1-\zeta_0 \in \A )=\Prob_R(\zeta_1 \in \A )$. Thus, it is enough to prove that starting from $V_0=-g, H_0=0$, the random variable $\zeta_1$ has a density.  Note that $\zeta_1\geq\eta_0 >0$, a.s., under $\PP_R$. By the Radon-Nikodym Theorem it suffices to show that $\Prob_R(\zeta_1 \in \A )=0$ for any $b<\infty$ and any  set $\A  \subseteq [0,b]$ of Lebesgue measure zero. 

Consider the stopping time
$$
T=\inf\{t \ge \eta_0: V_t \le -g, \ S_t=X_t\}.
$$
Since $T\leq \zeta_1$ a.s., by Remark \ref{oc16.1}    we have that $\zeta_1 = T + \vtau_{-g}\circ \theta_T$, where $\theta$ is the standard shift operator for Markov processes. Applying the strong Markov property  at  $T$ we obtain
\begin{align}
\label{eq:density_zeta_01}
\PP_R(\zeta_1\in A) &= \int_{-\infty}^{-g}  \int_0^{b}\PP( \vtau_{-g}\in A-t \mid V_0=v,\ H_0=0)\PP_R(T\in dt,\ V_T\in dv).
\end{align}

Suppose that $V_0=v<-g$. Note that 
\begin{align*}
\vtau_{-g}&=\inf\left\{t \ge 0: \sup_{u \le t}(B_u-S_u) - gt=-g-v\right\}
=\inf\left\{ t \ge 0: B_t - S_t -gt =-g-v\right\}\\
&=\inf\left\{ t \ge 0: B_t - \int_0^t(V_u + g)du=-g-v\right\},
\end{align*}
Consider  Brownian motion with drift 
\begin{align*}
W_t = B_t - \int_0^t \left(V\left(u\wedge \vtau_{-g}\wedge \vtau_{v-g\sup A}\right)+g\right)du,
\end{align*}
 and let $\tau^W_a$ be the hitting time of $a$ by $W$. The last displayed formula shows that $\kpar{\vtau_{-g}\in A-t} = \kpar{\tau^W_{-g-v}\in A-t}$. Since the drift of $W$ is bounded on finite time intervals, the Girsanov theorem implies that the laws of $W$ and $B$ are mutually absolutely continuous on finite time intervals. The event  $\kpar{\tau^{B,0}_{-g-v}\in A-t}\in \F_{\sup A}\subseteq\F_b$ has probability zero since the law of $\tau^{B,0}_{-g-v}$ is absolutely continuous, and $A-t$ has zero Lebesgue measure. Thus, the event $\kpar{\tau^{W}_{-g-v}\in A-t}$ has probability zero. We conclude that  $\PP\rpar{\vtau_{-g-v}\in A-t \mid V_0=v,\ H_0=0} = 0$ for any $v<-g$.

The discussion above allows us to transform \eqref{eq:density_zeta_01} into
\begin{align}
\label{eq:density_zeta_02}
\PP_R(\zeta_1\in A) &= \int_0^{b}\Indicator_{A}(t)\PP_R(T\in dt,\ V_T=-g)\leq \PP_R(V_T = -g) = \PP_R(T=\zeta_1).
\end{align}

 We will show that $T < \zeta_1$, a.s. This  holds if $V_{\eta_0}=-g-a_0-2$ because then $V_t \leq -g-a_0-2$ for $t\in [\eta_0, T]$. Suppose that $V_{\eta_0}=-g+a_0+2$ and note that $S_{\eta_0} = X_{\eta_0}$, by Remark \ref{oc16.1}. Hence, by the strong Markov property, it is enough to show that starting from $V_0=-g+a_0+2$ and $ H_0=0$, we have $T=\sigma(\vtau_{-g})< \zeta_0$, a.s. (recall \eqref{no12.1}). We will show that, with probability one, $B_{\vtau_{-g}}-S_{\vtau_{-g}} < \sup_{u \le \vtau_{-g}}(B_u-S_u)$, or, equivalently, in view of \eqref{oc9.1},
$X_{\vtau_{-g}}<S_{\vtau_{-g}}$. This shows that, a.s., $\sigma(\vtau_{-g})> \vtau_{-g}$
and, therefore, $V_{\sigma(\vtau_{-g})}<-g$, implying that $\sigma(\vtau_{-g})< \zeta_0$.

For any  $r > -g+a_0+2$, the process $V^*_t:= V_{t \wedge \vtau_{-g} \wedge \vtau_r}$ is bounded and thus the exponential local martingale
\begin{align}\label{no13.1}
 t \mapsto \exp\left\lbrace \int_0^t V^*_u dB_u - \frac{1}{2}\int_0^t (V^*_u)^2du\right\rbrace
\end{align}
is a positive martingale for all $t \ge 0$. 
Hence, by the Girsanov Theorem, the laws of processes $B_t - S_t$ and $B_t$ are mutually absolutely continuous on finite time intervals. For $t\geq 0$, let us define
\begin{align*}
M_t=\sup_{u \le t}B_u,\qquad \tau^{M,-g}_a = \inf\kpar{ t\geq 0 : M_t -gt = a}.
\end{align*}
The  mutual absolute continuity of the laws of 
$B_t - S_t$ and $B_t$ implies that if
\begin{equation}\label{equation:girtwo}
\Prob\left(B_{\tau^{M,-g}_{-a_0-2}}= \sup_{u \le \tau^{M,-g}_{-a_0-2}}B_u\right)=0,
\end{equation}
then
\begin{equation}\label{equation:girone}
\Prob\left(B_{\vtau_{-g}}-S_{\vtau_{-g}} = \sup_{u \le \vtau_{-g}}(B_u-S_u), \ \vtau_{-g} \le \vtau_r
\mid V_0 = -g+a_0+2,H_0=0
\right)=0.
\end{equation}
We will prove that \eqref{equation:girtwo} is  true. We need  more definitions. Set
\begin{align*}
M^{-1}(t)&= \inf\{u>0: M_u > t\},\\
\lambda(x) &=  \inf\{ t \ge 0: M^{-1}(t) - t/g \ge x\}.
\end{align*}
Note that
\begin{equation}\label{equation:inv}
g \tau^{M,-g}_{-a_0-2}-a_0-2 = \lambda((a_0+2)/g).
\end{equation}

Now we will use some facts from the theory of Levy processes. 
It is well known that $M^{-1}_t$ is a stable process with index $1/2$. By
\cite[Thm.~5(i), Ch.~VIII]{Bert}, we have $\limsup_{t\downarrow 0} M^{-1}(t) t^{2-\eps} = 0$, a.s., for every $\eps >0$.
It follows that
0 is an irregular point for $[0, \infty)$, i.e.,
if $M^{-1}(0) =0$ then
\begin{align*}
\Prob(\inf\{t>0: M^{-1}(t) -t/g \in [0, \infty)\}=0)=0.
\end{align*}
This fact implies that the ascending ladder height process $K$, which can roughly be viewed as the values taken by $M^{-1}(t) -t/g$ at its successive new maxima (see  \cite[Sec.~6.2]{kyprianou} for a more precise definition), is a compound Poisson subordinator (see the last paragraph on p. 150 of \cite{kyprianou}). This implies that the jump distribution of $K$ does not have atoms and thus, a.s., $K$ does not hit specified points (see the last paragraph in the proof of \cite[Lem.~7.10]{kyprianou}). Thus, a.s., 
$$M^{-1}(\lambda((a_0+2)/g))-\frac{\lambda((a_0+2)/g)}{g} >(a_0+2)/g,
$$
which, by the definition of $\tau^{M,-g}_{-a_0-2}$ and \eqref{equation:inv}, gives, a.s.,
$$
M^{-1}\left(M_{\tau^{M,-g}_{-a_0-2}}\right)=
M^{-1}\left(g\tau^{M,-g}_{-a_0-2} -a_0-2\right) > \tau^{M,-g}_{-a_0-2}.
$$
This means that at the stopping time $\tau^{M,-g}_{-a_0-2}$,  Brownian motion $B$ lies strictly below its running maximum, which gives \eqref{equation:girtwo} and thus \eqref{equation:girone}. By letting $r \rightarrow \infty$ in \eqref{equation:girone}, we get $B_{\vtau_{-g}}-S_{\vtau_{-g}} < \sup_{u \le \vtau_{-g}}(B_u-S_u)$, a.s., which in turn implies that $T <\zeta_1$, a.s., in view of the
 opening remarks of the proof.
\end{proof}

Recall that $Z=(V,S-X)$ has the state space $\mathbb{H}= \Reals \times [0,\infty)$.

\begin{thm}\label{thm:stationary}
For any $z\in\mathbb{H}$, assuming that $Z_0=z$,
when $t \rightarrow \infty$, the law of $Z_t$ converges in total variation distance to a unique stationary distribution $\pi$, given by 
\begin{align}\label{eq:stationarypi}
\pi(A) = \frac{ \Expect_R\left( \int_{0}^{\zeta_1} \Indicator_A(Z_u) du \right) }{\Expect_R (\zeta_1)},
\end{align}
for any measurable set $A\subseteq \mathbb{H}$. Furthermore,
for every $z \in \mathbb{H}$, $\PP_z$-a.s.,
\begin{align}\label{align:almostsure}
\lim_{t \rightarrow \infty}\frac{\int_0^t \Indicator_A(Z_u)du}{t} = \pi(A).
\end{align} 
\end{thm}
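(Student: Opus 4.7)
The plan is to invoke classical regeneration theory from \cite[Ch.~10]{thorisson}, applied to $Z=(V,S-X)$ with the renewal times $\{\zeta_k\}$ constructed in Section \ref{sec:renewal}. The two structural inputs are already in hand: Lemma \ref{lem:estzeta} gives $\Expect_R(\zeta_1)<\infty$ with stretched-exponential tails, and Lemma \ref{lem:density} shows that the inter-renewal distribution admits a Lebesgue density, hence is spread-out (and a fortiori non-arithmetic). These are exactly the hypotheses of the spread-out regeneration theorem.

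Starting from the renewal state $Z_0=(-g,0)$, the spread-out regeneration theorem yields that $\PP_R(Z_t\in\cdot)$ converges in total variation as $t\to\infty$ to the probability measure $\pi$ defined by \eqref{eq:stationarypi}, which is automatically stationary for $Z$; uniqueness follows because any stationary distribution is a semigroup fixed point and must coincide with the TV limit. To extend the convergence to an arbitrary $z\in\HH$, I would use that $\zeta_0$ of \eqref{no12.1} is $\PP_z$-a.s.\ finite (this follows from the hitting-time estimates of Section \ref{sec:estim} together with Remarks \ref{oc31.1}--\ref{oc31.2}, exactly as in the discussion preceding Lemma \ref{lem:estzeta}). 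Conditioning on $\zeta_0$ and applying the strong Markov property at $\zeta_0$ gives the bound
\begin{equation*}
\norm{\PP_z(Z_t\in\cdot)-\pi}_{TV} \leq \PP_z(\zeta_0>t/2) + \sup_{s\in[t/2,t]}\norm{\PP_R(Z_s\in\cdot)-\pi}_{TV},
\end{equation*}
and both terms vanish as $t\to\infty$.

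For the almost sure statement \eqref{align:almostsure}, define the cycle functionals
\begin{equation*}
Y_k := \int_{\zeta_{k-1}}^{\zeta_k}\Indicator_A(Z_u)\,du, \qquad T_k := \zeta_k-\zeta_{k-1}, \qquad k\geq 1,
\end{equation*}
which, under $\PP_R$, are i.i.d.\ sequences with $\Expect_R Y_1=\pi(A)\Expect_R(\zeta_1)$, $\Expect_R T_1<\infty$, and $0\leq Y_k\leq T_k$. Setting $N(t)=\max\{k:\zeta_k\leq t\}$, the sandwich
\begin{equation*}
\sum_{k=1}^{N(t)}Y_k \leq \int_0^t \Indicator_A(Z_u)\,du \leq \sum_{k=1}^{N(t)+1}Y_k
\end{equation*}
combined with the strong law for the $Y_k$ and $T_k$ and the elementary renewal theorem $N(t)/t\to 1/\Expect_R(\zeta_1)$ delivers the claim under $\PP_R$. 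For a general initial point $z$, the delay $\zeta_0$ is a.s.\ finite and hence negligible on the $t^{-1}$ scale.

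The step I expect to require the most care is locating and applying the precise form of the regeneration theorem in \cite{thorisson} that delivers total variation (not merely weak) convergence from the spread-out assumption, together with the transfer from the renewal state to general $z$ via $\zeta_0$. The potentially heavy tails of $\zeta_0$ when $|V_0|$ is large pose no obstacle here because only $\PP_z(\zeta_0>t)\to 0$ is needed, which is immediate from a.s.\ finiteness.
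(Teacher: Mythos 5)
Your proposal is correct and follows essentially the same route as the paper: both invoke the spread-out classical regeneration framework of Thorisson with Lemmas \ref{lem:estzeta} and \ref{lem:density} supplying the finite mean and the spread-out condition, both transfer from the renewal state to a general start via a.s.\ finiteness of the delay $\zeta_0$, and both obtain \eqref{align:almostsure} from the renewal-reward theorem (the paper cites Ross; your sandwich with $N(t)$ and SLLN is exactly the standard proof of that theorem). The only cosmetic difference is that you spell out the strong-Markov transfer at $\zeta_0$ explicitly, while the paper lets the cited Thorisson result absorb the delay.
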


\begin{proof}
Recall the regeneration times $\zeta_k$ from the beginning of this section.
Lemma \ref{lem:estzeta} shows that $\Expect(\zeta_1-\zeta_0) < \infty$. By Lemma \ref{lem:density}, the distribution of an inter-regeneration time $\zeta_{k+1}-\zeta_k$ is spread out in the sense of \cite[Ch.~10, Sec.~3.5]{thorisson}. The convergence in total variation distance and, hence, the uniqueness of the stationary distribution now follows from part (b) of  \cite[Ch.~10, Thm.~3.3]{thorisson} (see Section 2.4 in that chapter for the notation used in the cited theorem). The representation \eqref{eq:stationarypi} follows from (2.1) in \cite[Ch.~10, Thm.~2.1]{thorisson}.

Let
$
N_t = \sup \{k \ge 0: \zeta_k\leq t \}
$
be the number of renewals up to time $t$. The  arguments applied in the proof of Lemma \ref{lem:estzeta} can be used to show that $\Prob(\zeta_0< \infty)=1$.
To prove \eqref{align:almostsure}, note that for any measurable  set $A \subseteq \mathbb{H}$, we have
\begin{align*}
\int_0^t \Indicator_A( Z_u) du &= \int_0^{\zeta_0} \Indicator_A( Z_u) du + \sum_{k=1}^{N_t}\int_{\zeta_{k-1}}^{\zeta_k} \Indicator_A( Z_u) du + \int_{\zeta_{N_t}}^{t} \Indicator_A( Z_u) du.
\end{align*}
Clearly $(1/t)\int_0^{\zeta_0} \Indicator_A( Z_u) du \rightarrow 0$, a.s., as $t \rightarrow \infty$. Furthermore, $(1/t)\int_{\zeta_{N_t}}^{t} \Indicator_A( Z_u) du \le (1/t) (t-\zeta_{N_t}) \rightarrow 0$, a.s.,  by  \cite[Prop.~7.3]{ross}. The same proposition implies that
\begin{align*}
\frac{\sum_{k=1}^{N_t}\int_{\zeta_{k-1}}^{\zeta_k} \Indicator_A( Z_u) du}{t} \rightarrow \frac{ \Expect_R\left( \int_{0}^{\zeta_1} \Indicator_A(Z_u) du \right) }{\Expect_R (\zeta_1)},
\end{align*}
almost surely as $t \rightarrow \infty$, so \eqref{align:almostsure}
follows from \eqref{eq:stationarypi}.
\end{proof}

The following corollary to Lemma \ref{lem:submg} shows that we can use \cite[Thm.~1]{KaR14} in our context, and will be the main tool in the proof of Theorem \ref{thm:stat}. Recall that $\HH=\Reals\times\Reals_+$.

\begin{cor}
\label{cor:stationary}
Let $\pi$ be a probability measure on $(\HH,\B(\HH))$, with $\pi(\partial\HH)=0$. Then, $\pi$ is a stationary distribution for the solution to \eqref{eq:z} if and only if, for every $f\in C^2_c(\overline{\HH})$ such that $(\nabla f(z))^T\gamma\geq 0$ for all $z\in\partial\HH$, the inequality
\begin{align}
\label{eq:stationary}
\int_{\HH} \L f(z)\pi(dz) \leq 0.
\end{align}
holds.

\end{cor}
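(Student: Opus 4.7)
The plan is to derive the corollary as a direct consequence of \cite[Thm.~1]{KaR14}, which characterizes stationary distributions of reflected diffusions via an integrated generator inequality of exactly the form \eqref{eq:stationary}, provided the submartingale problem is well-posed and the candidate measure does not charge the reflection boundary. Both key ingredients are already in place: well-posedness is Theorem \ref{th:submg}, and $\pi(\partial\HH)=0$ is the standing hypothesis.

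For the ``only if'' direction I would give a self-contained argument. Assume $\pi$ is stationary, let $\PP_\pi = \int \PP_z\,\pi(dz)$, and let $\omega$ denote the coordinate process on $\C$. For any $f \in C^2_c(\overline\HH)$ with $(\nabla f)^T\gamma \geq 0$ on $\partial\HH$, the process $S_t[f]$ in \eqref{eq:f_submg} is a $\PP_z$-submartingale for each $z$, hence also a $\PP_\pi$-submartingale. Taking expectations and using Fubini together with stationarity (so that the law of $\omega(u)$ is $\pi$ for every $u\geq 0$) yields
\begin{align*}
0 \;=\; \EE_\pi\spar{f(\omega(t))} - \EE_\pi\spar{f(\omega(0))} \;\geq\; \int_0^t \int_\HH \L f(z)\,\pi(dz)\,du \;=\; t\int_\HH \L f\,d\pi.
\end{align*}
Dividing by $t>0$ gives \eqref{eq:stationary}.

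For the ``if'' direction I would invoke \cite[Thm.~1]{KaR14}, after verifying that our setup meets its hypotheses: the reflection vector field $\gamma=(1,1)^T$ is constant and transverse to $\partial\HH$; the coefficients of $\L$ are smooth on $\overline\HH$; the submartingale problem is well-posed by Theorem \ref{th:submg}; its solutions spend zero Lebesgue time on $\partial\HH$ by Lemma \ref{le:indicator}; and the admissible class of test functions $f\in C^2_c(\overline\HH)$ with $(\nabla f)^T\gamma\geq 0$ on $\partial\HH$ coincides with the one appearing in \eqref{eq:stationary}. The main obstacle I anticipate is that \cite{KaR14} is stated under boundedness assumptions on the drift, whereas our drift $(-g,v)^T$ is unbounded in $v$. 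To get around this I plan to use the same localization strategy as in the proof of Theorem \ref{th:submg}: since any $f\in C^2_c(\overline\HH)$ has compact support in some ball $\{|z|\leq N_0\}$, replacing the drift outside this ball by a smooth truncation does not alter $\L f$ on $\mathrm{supp}(f)$, so $\int_\HH \L f\,d\pi$ is unchanged; moreover, by Lemma \ref{le:zz}(c) the truncated and original processes agree up to exit times $T^N\uparrow\infty$, so stationarity for the truncated problem transfers to the original. Once this localization is in place, \cite[Thm.~1]{KaR14} applies and delivers the stationarity of $\pi$.
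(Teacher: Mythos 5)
Your overall strategy matches the paper: both you and the authors reduce the corollary to an application of \cite[Thm.~1]{KaR14}, after identifying Theorem~\ref{th:submg} (well-posedness of the submartingale problem) and Lemma~\ref{le:indicator} (zero boundary occupation time) as the prerequisites, and your direct derivation of the ``only if'' direction from the submartingale property together with stationarity is correct (though redundant, since \cite[Thm.~1]{KaR14} already gives the biconditional). Where you diverge from the paper is in two ways. First, the paper's actual verification of \cite[Thm.~1]{KaR14} consists of checking Assumption~1 there, namely that $C^2_c(\overline\HH)$ separates points and that, because the reflection field $\gamma=(1,1)^T$ is constant, a test function of the form $f(v,h)=v+h$ multiplied by a smooth cutoff satisfies the required oblique-derivative normalization; you sketch a different and somewhat informal list of hypotheses instead.

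Second, and this is the substantive gap, you anticipate a bounded-drift obstruction in \cite{KaR14} and propose to circumvent it by truncating the drift, but that localization step does not work as stated. The hypothesis \eqref{eq:stationary} is quantified over all $f\in C^2_c(\overline\HH)$, not just those supported in a fixed ball $\{|z|\le N\}$; for $f$ with larger support one has $\L f\neq \L^N f$, so from \eqref{eq:stationary} alone you cannot conclude that $\pi$ satisfies the basic adjoint relationship for the truncated generator $\L^N$ and hence cannot invoke \cite[Thm.~1]{KaR14} for the truncated process. Moreover, even if one knew $\pi$ were stationary for a truncated process, the pathwise identification of Lemma~\ref{le:zz}(c) holds only up to the random exit time $T^N$, and stationarity is a global-in-time statement; transferring it back to the original process would require an additional argument (e.g.\ uniform control of $\PP_\pi(T^N\leq t)$ as $N\to\infty$) that you do not supply. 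In fact the paper does not perform any such localization for this corollary at all — the bounded-drift caveat discussed in the proof of Theorem~\ref{th:submg} concerns \cite{StV71}, not \cite{KaR14} — so the obstruction you worry about is not present, and the extra machinery you introduce is both unnecessary and, as written, incorrect.
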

\begin{proof}
By Lemma \ref{lem:submg}, the law of the solution to  \eqref{eq:z} and the solution to the submartingale problem for $(\L,\gamma)$ in $\HH$ are the same. We only need to show that \cite[Thm.~1]{KaR14} applies.

Note that for any constant $c\in\Reals$ we have $\L c=0$, thus, the set $C^2_c(\overline{\HH})$ and the set $\mathcal{H}$ from \cite{KaR14} coincide. Also, the set $\mathcal{V}$ from \cite{KaR14} is empty in our case. We next check that Assumption 1 in \cite{KaR14} holds.
\begin{enumerate}
\item Routine arguments show that $C^2_c(\overline{\HH})$ separates points.

\item 
We will use the notation from  \cite{KaR14},
including but not limited to Assumption 1.
Since our reflection field $\gamma=(1,1)^T$ is constant,  we have $d(x)\cap S_1(0)=\frac1{\sqrt 2}\gamma$. The function $f(v,h) = v+h$ satisfies $\left(\nabla f(v,h)\right)^T \frac1{\sqrt 2}\gamma=1$. Let $\eta \in C^\infty(\overline\HH)$ be compactly supported and equal to 1 on an open neighborhood of $x$.
The function $f_{r,s} := f \eta$ 
satisfies the condition stated in Assumption 1 of \cite{KaR14}.
\end{enumerate}

This shows that Assumption 1 holds so  \cite[Thm.~1]{KaR14} applies in our case.
\end{proof}

\begin{proof}[Proof of Theorem \ref{thm:stat}]

The existence and uniqueness of the stationary distribution and convergence of $Z_t$ to this distribution in total variation distance were proved in Theorem \ref{thm:stationary}.

We will apply Corollary \ref{cor:stationary}. It is readily verifiable from \eqref{eq:density} that the proposed stationary distribution $\pi(dv,dh)=\xi(v,h)dvdh$ is indeed a probability distribution, and as it has a density, therefore trivially, we have $\pi(\partial \mathbb{H})=0$. We next show that \eqref{eq:stationary} holds.

Recall that $\mathcal{L}f(v,h)=\frac12 \partial_{hh}f +v\partial_h f -g\partial_v f $, and $\gamma=(1,1)^T$. Let $f\in C^2_c(\overline{\mathbb{H}})$ be a function satisfying
\begin{align}
\label{eq:bdry_condition}
(\nabla f(v,0))^T\gamma  =\partial_h f(v,0)+\partial_v f(v,0) \geq 0
\end{align}
for $v\in\Reals$. We proceed by direct computation. By Fubini's Theorem,
\begin{align*}
\int_{\mathbb{H}}  \mathcal{L} f\ \xi(v,h)dvdh &= \frac{2g}{\sqrt{\pi}} \int_0^\infty \int_{-\infty}^\infty \left( \frac12 \partial_{hh}f +v\partial_h f -g\partial_v f \right) e^{-2gh}e^{-(v+g)^2} dvdh \\
&= -\frac{2g^2}{\sqrt{\pi}} \int_0^\infty \int_{-\infty}^\infty  \partial_v f(v,h)  e^{-(v+g)^2} dv\  e^{-2gh}dh  \\
&\qquad +\frac{2g}{\sqrt{\pi}} \int_{-\infty}^\infty  \int_0^\infty  \left( \frac12 \partial_{hh}f +v\partial_h f \right) e^{-2gh} dh \ e^{-(v+g)^2}dv.
\end{align*}
Next, we integrate by parts the inner integral of each term; actually, we have to integrate by parts twice. We get
\begin{align*}
\int_{\mathbb{H}}  \mathcal{L} f\ \xi(v,h)dvdh 
&= -\frac{4g^2}{\sqrt{\pi}} \int_0^\infty  \int_{-\infty}^\infty   f(v,h)(v+g)  e^{-(v+g)^2} dv\ e^{-2gh} dh  \\
&\qquad -\frac{2g}{\sqrt{\pi}} \int_{-\infty}^\infty  \left( \frac12\partial_hf(v,0) +(v+g)f(v,0)  \right) e^{-(v+g)^2}dv\\
&\qquad+\frac{2g}{\sqrt{\pi}} \int_{-\infty}^\infty \int_0^\infty  \left( 2g^2  +2vg \right)f(v,h) e^{-2gh} dh \,e^{-(v+g)^2}dv.
\end{align*}
The two double integrals above cancel each other so we are left with a single integral,
\begin{align*}
\int_{\mathbb{H}}  \mathcal{L} f\ \xi(v,h)dvdh 
&= -\frac{2g}{\sqrt{\pi}} \int_{-\infty}^\infty \left(  \frac12\partial_hf(v,0) +(v+g)f(v,0)  \right) e^{-(v+g)^2}dv\\
&= -\frac{2g}{\sqrt{\pi}} \int_{-\infty}^\infty  \left(\frac12\partial_hf(v,0) e^{-(v+g)^2} -\frac12 f(v,0)\partial_v e^{-(v+g)^2}\right)dv.
\end{align*}
Integrating by parts once again we obtain
\begin{align*}
\int_{\mathbb{H}}  \mathcal{L} f\ \xi(v,h)dvdh 
&= -\frac{g}{\sqrt{\pi}} \int_{-\infty}^\infty  \left(\partial_hf(v,0) e^{-(v+g)^2} + \partial_vf(v,0) e^{-(v+g)^2}\right)dv \\
&= -\frac{g}{\sqrt{\pi}} \int_{-\infty}^\infty  \left( \partial_hf(v,0)  + \partial_vf(v,0) \right) e^{-(v+g)^2}dv \\
&= -\frac{g}{\sqrt{\pi}} \int_{-\infty}^\infty (\nabla f(v,0))^T\gamma \ e^{-(v+g)^2}dv,
\end{align*}
which is nonpositive by \eqref{eq:bdry_condition}.
\end{proof}

\begin{remark}\label{rem:uniquest}
Recall that we showed existence and uniqueness of the stationary distribution in Theorem \ref{thm:stationary}. The existence is also implied by \cite[Thm.~1]{KaR14} via Corollary \ref{cor:stationary} since we showed that \eqref{eq:stationary} holds for the distribution $\pi$ with density (with respect to Lebesgue measure) given by \eqref{eq:density}. We also believe that the uniqueness of the stationary distribution follows from the Harris irreducibility of our process. But the main power of Theorem \ref{thm:stationary} comes from the fact that it uses Lemma \ref{lem:estzeta} and Lemma \ref{lem:density} to show convergence to stationarity in total variation distance (the existence and uniqueness of the stationary distribution follow as a by-product of this convergence). 
\end{remark}

\section{Fluctuations and Strong Laws of Large Numbers}\label{sec:strong}

{This section is dedicated to the proofs of Theorem \ref{thm:velfluc} and Theorem \ref{thm:stronglaw}. We first derive fine estimates for a number of hitting times of the velocity process and the gap process. These, in turn, are used to establish precise estimates for the fluctuations of $V$ and $S-X$ on the intervals $[\zeta_n, \zeta_{n+1}]$ for $n \ge 0$, where $(\zeta_n)_{n \ge 0}$ are the renewal times defined in Section \ref{sec:renewal}. The global fluctuation results of Theorem \ref{thm:velfluc} and Theorem \ref{thm:stronglaw} are then proved by decomposing the path $\{Z_s: s \le t\}$ into random time intervals $[\zeta_n, \zeta_{n+1}]$ and applying the fluctuation results proved on these intervals.} 
\begin{lem}\label{no5.5}

(i) For some $C>0$, all $0\leq h \leq 1$ and $a\leq -g-2$,
\begin{align*}
\EE\left(\vtau_{-g} \mid V_0 = a , H_0 = h\right) \leq C|a|.
\end{align*}

(ii) For some $C>0$, all $h\geq 0$ and $a\in\Reals$,
\begin{align*}
\EE\left(\vtau_{a+1} \mid V_0 = a , H_0 = h\right) \geq  
C\frac 1 {(-a)\lor 1}.
\end{align*}

(iii) For some $C>0$, all $h \geq 0$ and $a\geq 2$, 
\begin{align*}
\EE\left(\vtau_{-g} \mid V_0 = a , H_0 = h\right) \leq C a.
\end{align*}

(iv) For any $h\geq 0$ and $a\in \Reals$,
\begin{align*}
\EE\left(\vtau_{a-1} \mid V_0 = a , H_0 = h\right) \geq 1/g.
\end{align*}

(v) For some $C>0$, any $h\geq 1$ and $-g-2\leq a \leq -g-1$,
\begin{align*}
\EE\left(\vtau_{-g}  
\mid V_0 = a , H_0 = h\right) \leq  C\sqrt{h}.
\end{align*}

(vi) For some $C>0$, any $h\geq 1$ and $a\in \Reals$,
\begin{align*}
\EE\left(\stau_{h-1} \land \vtau_{a-1} 
\mid V_0 = a , H_0 = h\right) \geq  C \frac 1 {(-a)\lor 1} .
\end{align*}

\end{lem}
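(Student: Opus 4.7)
The plan. The six estimates split into three upper bounds (parts (i), (iii), (v)), which I would obtain by integrating the Gaussian-type tail estimates of Lemma~\ref{lem:hittimeest} and Lemma~\ref{lem:hitup}, and three lower bounds (parts (ii), (iv), (vi)), which I would obtain from the pathwise identity $V_t = V_0 + L_t - gt$ together with the Skorohod representation $L_t = \sup_{u \le t}(B_u - \int_0^u V_s\,ds - H_0)_+$ and a Markov-type argument.

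For the upper bounds: in (i) I first reduce to the case $H_0 = 0$ by rerunning the computation of Lemma~\ref{lem:hittimeest} with an $H_0$-correction of magnitude at most $1$ (which only shifts the constant $a_2 - a_1$ in that lemma by an additive $O(1)$); taking $a_1 = 1$, $a_2 = |a+g|$ and integrating the resulting tail yields $\EE \vtau_{-g-1} \le C|a|$. By Remark~\ref{oc16.1}, $H_{\vtau_{-g-1}} = 0$, and the residual $\EE(\vtau_{-g} - \vtau_{-g-1})$ starting from $(-g-1, 0)$ is bounded by a constant via the regenerative arguments of Section~\ref{sec:renewal}, since the expected duration of a full renewal cycle is finite by Lemma~\ref{lem:estzeta}. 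Part (iii) is symmetric, using Lemma~\ref{lem:hitup} in place of Lemma~\ref{lem:hittimeest} to obtain $\EE \vtau_{-g+1} \le Ca$ (the upper bound on $L$ in the proof of that lemma only becomes easier for $H_0 \ge 0$ by monotonicity of the Skorohod formula), and Remark~\ref{oc16.1} again provides $H = 0$ at $\vtau_{-g+1}$. Part (v) combines an explicit free-fall computation (before the first collision, $H_t = h - B_t + V_0 t - gt^2/2$, and the dominant $gt^2/2$ term forces $\EE \tau_0 = O(\sqrt{h/g})$ for the first collision time) with (i) applied at $(V_{\tau_0}, 0)$, where $|V_{\tau_0} + g| \le 2 + g\tau_0 = O(\sqrt h)$.

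For the lower bounds: (iv) is immediate from $V_t \ge V_0 - gt$, which forces $\vtau_{a-1} \ge 1/g$ pathwise. For (ii), let $\tau = \vtau_{a+1}$; since $V_s \ge a - gs$ on $[0,\tau]$, the Skorohod formula yields
\begin{align*}
L_t \le \sup_{u \le t}(B_u + |a|\, u + gu^2/2)_+.
\end{align*}
For deterministic $t_0$, $\EE L_{t_0} \le \sqrt{2 t_0/\pi} + |a|\, t_0 + g t_0^2/2$, and $\vtau_{a+1} \le t_0$ forces $L_{t_0} \ge 1$, so Markov gives $\PP(\vtau_{a+1} \le t_0) \le \EE L_{t_0}$; choosing $t_0 = c/(|a|\vee 1)$ with $c$ small makes this at most $1/2$, whence $\EE \vtau_{a+1} \ge t_0/2 \ge c'/(|a|\vee 1)$. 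Part (vi) uses the same template with $\tau = \stau_{h-1} \wedge \vtau_{a-1}$: for $h > 1$ the constraint $H \ge h - 1 > 0$ on $[0,\tau]$ prevents any local-time accumulation, so $L \equiv 0$ on $[0,\tau]$ and $\vtau_{a-1} = 1/g$ deterministically, giving $\tau = \min(\stau_{h-1}, 1/g)$; the event $\{\stau_{h-1} > t_0\}$ contains $\{\sup_{s \le t_0}|B_s| < 1/4\}$ once $t_0 = c/(|a|\vee 1)$ is small enough to keep the obstacle curve $1 + as - gs^2/2$ above $1/4$ on $[0, t_0]$, and a standard Brownian supremum estimate makes this event have probability at least $1/2$.

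The main technical obstacle is the degeneracy of Lemma~\ref{lem:hittimeest} as $a_1 \downarrow 0$: the factor $\exp(-a_1^2 t / 8)$ becomes useless for the final short step of reaching $-g$ itself from $-g-1$, and likewise for (iii); the cleanest workaround is the renewal-cycle argument above. A secondary subtlety is the boundary case $h = 1$ of (vi), where $H$ can actually reach zero during $[0,\tau]$; there one has to bound local-time accumulation using the same Skorohod--Markov estimate as in (ii), replacing the deterministic identity $L \equiv 0$ with the upper bound on $L_t$ displayed above.
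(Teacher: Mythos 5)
The overall strategy — Skorohod bounds on the local time for the lower bounds, integrating the tail estimates of Lemmas~\ref{lem:hittimeest} and \ref{lem:hitup} for the upper bounds, reducing to $H_0=0$ via the first collision time, and patching the final step near $-g$ separately — matches the paper. But there are three concrete gaps. \textbf{First, part (iii):} you invoke Remark~\ref{oc16.1} to claim $H=0$ at $\vtau_{-g+1}$, but that remark only gives $S_{\vtau_a}=X_{\vtau_a}$ when the target level $a$ is \emph{above} $V_0$; here you are hitting $-g+1$ starting from $V_0=a\geq 2 > -g+1$, so it does not apply, and indeed $H_{\vtau_{-g+1}}$ can be strictly positive. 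The paper sidesteps this by applying the strong Markov property at $\sigma(0)\wedge\vtau_{a-1}$ (first collision or one-unit drop, whichever comes first), which deterministically occurs within time $1/g$ and lands on the boundary $H=0$ with a controlled velocity, allowing the one-step estimate to iterate; your scheme needs this modification or something equivalent.

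\textbf{Second, parts (ii) and (vi):} writing $L_t\le\sup_{u\le t}(B_u+|a|u+gu^2/2)_+$ and choosing $t_0=c/(|a|\vee 1)$ only yields $\EE\vtau_{a+1}\gtrsim 1/(|a|\vee 1)$, which for $a>1$ is $\sim 1/a$ and is strictly weaker than the claimed $\geq C/((-a)\lor 1)=C$. The correct bound is $L_t\le\sup_{u\le t}(B_u-au+gu^2/2)_+$; for $a\geq 0$ the linear term is nonpositive, so you may take $t_0$ a fixed constant and recover the bound $\geq C$. The same sign issue appears in your choice of $t_0$ for the obstacle curve in (vi). (As it happens, the weaker $1/(|a|\vee 1)$ bound still survives the summation in Lemma~\ref{no5.1}(vi) because of the Gaussian factor $e^{-k^2}$, so the downstream application would go through, but the lemma as stated is not proved.) \textbf{Third,} the appeal to ``regenerative arguments of Section~\ref{sec:renewal}'' for the residual $\EE(\vtau_{-g}\mid V_0=-g\mp 1, H_0=0)\le C$ is too vague: this is exactly the regime where the tail bound from Lemma~\ref{lem:hittimeest} degenerates ($a_1\downarrow 0$), and the inter-regeneration time $\zeta_1$ starts from $(-g,0)$, not $(-g\mp1,0)$, so it does not directly dominate your quantity. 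The paper resolves this by descending in unit steps only until reaching a level far from $-g$ (namely $[-2g-2,-2g-1]$ for (i), or $[1,2]$ for (iii)), and then using \eqref{equation:hitg} or \eqref{no8.1}, which are proved from the intermediate renewal-time estimates.
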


\begin{proof}

(i) If $V_0=a\leq -g-2$, then  $\vtau_{-g} = \sigma(0) + \vtau_{-g}\circ\theta_{\sigma(0)}$ where $\theta$ is the standard shift operator for Markov processes. 
By applying the strong Markov property at time $\sigma(0)$  we obtain
\begin{align}\label{f4.1}
&\EE(\vtau_{-g} \mid V_0=a, H_0=h) \\
&\le
\EE (\sigma(0) \mid V_0=a, H_0 = h) 
+ \sum_{k=0}^{\infty} \EE(\vtau_{-g} \mid V_0 = a-k-1, H_0=0) \PP(V_{\sigma(0)} \leq a-k \mid V_0=a,\ H_0=0)\nonumber\\
&\le
\EE (\sigma(0) \mid V_0=a, H_0 = h) 
+ \sum_{k=0}^{\infty} \EE(\vtau_{-g} \mid V_0 = a-k-1, H_0=0) \PP( \sigma(0) \geq k/g \mid V_0=a,\ H_0=0). \nonumber
\end{align}
We will next estimate the terms in the sum.

Suppose that $B_0=0$, $a\leq 0$, $V_0 = a$, $h\geq 0$ and $H_0 = h$. If
$\sigma(0) \geq s$ then $B_t \leq h + a t - gt^2/2 \leq h - gt^2/2$
for all $t\leq s$. In particular, $B_s \leq h - g s^2/2$. Thus, using \eqref{oc13.3}, for $s\geq 2\sqrt{h/g}$,
\begin{align}\label{no7.4}
\PP(\sigma(0) \geq s  \mid V_0=a, H_0 = h) \leq 
\PP(B_s \leq h - g s^2/2)
\leq C \frac{\sqrt{s}}{{g s^2-2h}} e^{-(h - g s^2/2)^2/(2s)}.
\end{align}
This implies the following two bounds, with $0< C, C_1<\infty$,
\begin{align}
\label{no7.3}
\EE (\sigma(0) \mid V_0=a, H_0 = h) &\leq C,\qquad 0\leq h\leq 1 \\
\label{no7.5}
\PP(\sigma(0) \ge k/g \mid V_0=a,\ H_0=h) &\leq C k^{-1}e^{-C_1 k^3},\qquad k\geq 1.
\end{align}

Next, we bound the expectations in \eqref{f4.1}. Consider $b \leq - g-2$. According to  
Lemma \ref{lem:hittimeest} applied with $-g-a_1 = b+1$ and $-g-a_2 = b$,
for $t\ge 2/(-g-b-1) $,
\begin{equation*}
\Prob\left(\vtau_{b+1}>t \mid V_0=b, H_0=0\right) \le \frac{4}
{(1+(-g-b-1)t)(-g-b-1)\sqrt{2\pi t}}e^{-(-g-b-1)^2t/8}.
\end{equation*}
Since $-g-b-1 \geq 1$, we obtain for $t\geq 2$,
\begin{align*}
\Prob(\vtau_{b+1}>t \mid V_0=b, H_0=0) \le \frac{4}
{(1+t)\sqrt{2\pi t}}e^{-t/8}.
\end{align*}
Hence,
\begin{align*}
\EE(\vtau_{b+1} \mid V_0=b, H_0=0) \le C.
\end{align*}
Recall from Remark \ref{oc16.1} that if $V_0 = b-k$ then
$S_{\vtau_{b-k+1}} = X_{\vtau_{b-k+1}}$. Hence, the repeated application of the last estimate at the stopping times $\vtau_{b-k+1}, \vtau_{b-k+2}, \vtau_{b-k+3}, \dots,$ shows that if $b \leq -2g - 1$ then
\begin{align}\label{no7.1}
\EE(\vtau_{b} \mid V_0=b-k, H_0=0) \le Ck.
\end{align}

We can take $a_0 = g + 1$ in
\eqref{equation:hitg} to obtain for $-2g-2 \leq b \leq -2g-1$,
\begin{align*}
\EE(\vtau_{-g} \mid V_0=b, H_0=0) \le
\EE(\vtau_{-g} \mid V_0=-2g-2, H_0=0) \le C.
\end{align*}
This, the strong Markov property applied at $\vtau_{b}$ where $b$ satisfies $-2g-2\leq b \leq -2g-1$, and \eqref{no7.1}  imply that for $a \leq -g-2$,
\begin{align}\label{no7.2}
\EE(\vtau_{-g} \mid V_0=a, H_0=0) \le C(-g-a).
\end{align}

Substituting \eqref{no7.3}, \eqref{no7.5} and \eqref{no7.2} into \eqref{f4.1}, we get
\begin{align*}
\EE(\vtau_{-g} \mid V_0=a, H_0=h) 
&\leq C + \sum_{k=1}^\infty C (-g-a+k+1) k^{-1} e^{-C_1 k^3} \leq C\abs{a}.
\end{align*} 
 
\medskip

(ii)
It is easy to see that for all $h\geq 0$,
\begin{align*}
\EE\left(\vtau_{a+1} \mid V_0 = a ,  H_0 = h\right) \geq 
\EE\left(\vtau_{a+1} \mid V_0 = a , H_0 = 0\right),
\end{align*}
so we will assume that $V_0=a$ and $H_0=0$. 

For $t \leq 1/g$, we have $V_t \geq a-1$, so
 $\displaystyle{L_t \le L^{(a-1)}_t=\sup_{u\le t}(B_u-(a-1)u)}$. 
Suppose that $a\leq -g/2+1$ and, therefore, $0\leq -1/(2(a-1)) \leq 1/g$.
If $\sup_{u\leq -1/(2(a-1))} B_u < 1/2$ then,
\begin{align*}
L_{-1/(2(a-1))} \le L^{(a-1)}_{-1/(2(a-1))}
=\sup_{u\le -1/(2(a-1))}(B_u-(a-1)u)
\leq \sup_{u\le -1/(2(a-1))}B_u + \frac{a-1}{2(a-1)} < 1,
\end{align*}
and, therefore, $\vtau_{a+1} \geq -1/(2(a-1))$. It follows  that,
\begin{align*}
\EE&\left(\vtau_{a+1} \mid V_0 = a , S_0 - X_0 = 0\right)
\geq \frac 1 {-2 (a-1)} \PP\left(\vtau_{a+1} \geq \frac 1 {-2 (a-1)} \right)\\
&\geq \frac 1 {-2 (a-1)} \PP\left(\sup_{u\leq -1/(2(a-1))} B_u < 1/2 \right)
\geq C/|a|.
\end{align*}

It is easy to check that for every $a\geq -g/2+1$, the same argument
generates a bound at least as large as for $a= -g/2+1$. This easily translates onto the statement in part (ii) of the lemma.

\medskip

(iii) Consider $a \geq 2$ and take any $ v \in (a-1,a]$.
According to  
Lemma \ref{lem:hitup} (i) applied with $-g+a_1 = a-1$ and $-g+a_2 = v$,
for $t\ge 2/g $,
\begin{equation*}
\Prob(\vtau_{a-1}>t \mid V_0=v, H_0=0) \le e^{-g(a-1)t}
\leq e^{-gt}.
\end{equation*}
Hence,
\begin{align}\label{align:uni}
\sup_{v \in (a-1,a]}\EE(\vtau_{a-1} \mid V_0=v, H_0=0) \le C.
\end{align}
We can write $\vtau_{a-1} = \sigma(0) \wedge \vtau_{a-1} + \vtau_{a-1} \circ \theta_{\sigma(0) \wedge \vtau_{a-1}}$ where $\theta$ is the standard shift map. Therefore, by the strong Markov property applied at the stopping time $\sigma(0)$, we get for any $h \ge 0$,
\begin{align*}
\EE(\vtau_{a-1} \mid V_0=a, H_0=h) &= \EE(\sigma(0) \wedge \vtau_{a-1} + \vtau_{a-1} \circ \theta_{\sigma(0) \wedge \vtau_{a-1}} \mid V_0=a, H_0=h)\\
&\le \EE(\sigma(0) \wedge \vtau_{a-1}\mid V_0=a, H_0=h) + \sup_{v \in (a-1,a]}\EE(\vtau_{a-1} \mid V_0=v, H_0=0).
\end{align*}
Note that if $\sigma(0) >1/g$, then $\vtau_{a-1}=1/g$. Thus, $\EE(\sigma(0) \wedge \vtau_{a-1}\mid V_0=a, H_0=h) \le 1/g$. Applying this and \eqref{align:uni} to the above inequality yields
\begin{align*}
\sup_{h \ge 0} \ \EE(\vtau_{a-1} \mid V_0=a, H_0=h) \le C.
\end{align*}
The repeated application of the last estimate at the stopping times $\vtau_{a-1}, \vtau_{a-2}, \vtau_{a-3}, \dots,$ shows that if $a- k \geq 1$ and $h\geq 0$ then
\begin{align}\label{no8.2}
\EE(\vtau_{a-k} \mid V_0=a, H_0=h) \le Ck.
\end{align}
For $1 \leq a \leq 2$ and $h \ge 0$, we can write $\vtau_{-g} = \sigma(0) \wedge \vtau_{-g} + \vtau_{-g} \circ \theta_{\sigma(0) \wedge \vtau_{-g}}$. It is clear that $\sigma(0) \wedge \vtau_{-g} \le (g+2)/g$. Thus, we get
\begin{equation}\label{eq:negone}
\EE(\vtau_{-g} \mid V_0=a, H_0=h) \le (g+2)/g + \sup_{v \in (-g,2]}\EE(\vtau_{-g} \mid V_0=v, H_0=0).
\end{equation}
By another application of the strong Markov property, we can write
\begin{equation*}
\sup_{v \in (-g,2]}\EE(\vtau_{-g} \mid V_0=v, H_0=0) \le \sup_{v \in (-g,2]}\EE(\vtau_{-g} \wedge \vtau_2 \mid V_0=v, H_0=0) + \EE(\vtau_{-g} \mid V_0=2, H_0=0).
\end{equation*}
Applying Lemma \ref{lem:tube}, we get $\displaystyle{\sup_{v \in (-g,2]}\EE(\vtau_{-g} \wedge \vtau_2 \mid V_0=v, H_0=0) \le C}$. Further, taking $a_0 = g + 1$ in \eqref{no8.1}, we obtain $\EE(\vtau_{-g} \mid V_0=2, H_0=0) \le C$. Substituting these estimates into \eqref{eq:negone}, we get
$$
\sup_{a \in [1,2], \ h \ge 0}\EE(\vtau_{-g} \mid V_0=a, H_0=h) \le C.
$$
This, the strong Markov property applied at $\vtau_{a-k}$ where $k$ satisfies $1\leq a-k \leq 2$, and \eqref{no8.2}  imply that for $a \geq 2$ and $h\geq 0$,
\begin{align*}
\EE(\vtau_{-g} \mid V_0=a, H_0=h) \le C a.
\end{align*}

\medskip

(iv) The estimate follows from the equation $dV_t = dL_t -g dt$ and the fact that $L$ is non-decreasing.

\medskip

(v) Assume that $V_0=a$, and $H_0=h\geq 1$. By \eqref{no7.4}, we have  for $s\geq 2/\sqrt{g}$:
\begin{align*}
\PP(\sigma(0) \ge s\sqrt{h}) \leq C \frac{h^{1/4}\sqrt{s}}{ gh s^2/2-h} e^{-(h - gh s^2/2)^2/(2s\sqrt{h})}
\leq
C \frac{\sqrt{s}}{ g s^2/2-1} e^{-(1 - g s^2/2)^2/(2s)}.
\end{align*}
This implies that 
\begin{align}\label{no8.11}
\EE (\sigma(0) \mid V_0=a, H_0 = h) \leq C\sqrt{h}.
\end{align}
Note that $V_{\sigma(0)} = a -g\sigma(0)$ so
\eqref{no7.4} yields for $k\geq 2\sqrt{hg}$,
\begin{align*}
\PP(V_{\sigma(0)} \in [a-k-1, a-k])
&\leq \PP(V_{\sigma(0)} \le a-k)
=\PP(\sigma(0) \geq k/g) 
\\
&\leq C \frac{\sqrt{k/g}}{ g (k/g)^2-2h} e^{-(h - g (k/g)^2/2)^2/(2k/g)}.
\end{align*}
This, the strong Markov property applied at $\sigma(0)$, \eqref{no7.2} and
\eqref{no8.11} imply that
\begin{align*}
\EE&(\vtau_{-g} \mid V_0=a, H_0=h) \\
&\le
\EE (\sigma(0) \mid V_0=a, H_0 = h) 
+ \sum_{k=0}^\infty C(-g-a+k+1) \PP(V_{\sigma(0)} \in [a-k-1, a-k])\\
&\leq C\sqrt{h} +  
(-g-a+2\sqrt{hg}+1)\\
&\qquad+ \sum_{k=\lfloor2\sqrt{hg}\rfloor+1}^\infty C(-g-a+k+1)
\frac{\sqrt{k/g}}{g (k/g)^2-2} e^{-(1 - g (k/g)^2/2)^2/(2k/g)}\\
&\leq C\sqrt{h} +  
(-g+g+2+2\sqrt{hg}+1)\\
&\qquad+ \sum_{k=\lfloor2\sqrt{hg}\rfloor+1}^\infty C(-g+g+2+k+1)
\frac{\sqrt{k/g}}{ g (k/g)^2-2} e^{-(1 - g (k/g)^2/2)^2/(2k/g)}\\
&\leq C \sqrt{h}.
\end{align*}

\medskip

(vi) 
Suppose that $S_0=h\geq 1$, $X_0=0$ and $V_0=a\leq -\sqrt{g/8}$. Then, for $0\leq t \leq - 1/(4a)$,
\begin{align*}
S_t \geq h-gt^2/2 + at  \geq 
h - \frac g 2 \cdot \frac 1 {16 a^2} - a \frac 1 {4a} 
\geq
h-1/2.
\end{align*}
Therefore, for any $t_0 \in (0,-1/(4a)]$, if $\sup_{u\leq t_0} B_u \leq 1/2$, then $X_t \leq 1/2$ 
for $0\leq t \leq t_0$
and, therefore, $\stau_{h-1} \geq t_0$. 
The local time $L$ does not increase on the interval
$[0,\stau_{h-1}]$ because $S$ and $X$ do not meet on this interval. Hence, $V$ decreases at the constant rate on this interval and, therefore,
$\stau_{h-1} \land \vtau_{a-1}
= \stau_{h-1} \land 1/g$.
Thus,
\begin{align*}
&\PP\left(\stau_{h-1} \land \vtau_{a-1} \geq \frac{1}{(-4a) \lor g}\right)
=\PP\left(\stau_{h-1} \land 1/g  \geq \frac{1}{(-4a) \lor g}\right) =
\PP\left(\stau_{h-1}   \geq \frac{1}{(-4a) \lor g}\right)\\
&\geq
\PP\left(\sup_{u\leq 1/((-4a) \lor g)} B_u \leq 1/2\right) \geq C,
\end{align*}
and, therefore, 
\begin{align*}
\EE\left(\stau_{h-1} \land \vtau_{a-1} \mid V_0 = a , S_0 - X_0 = h\right) \geq C\cdot \frac{1}{(-4a) \lor g} \ ,
\end{align*}
proving part (vi) of the lemma in the case $h\geq 1$ and $a\leq -\sqrt{g/8}$.

For $a\geq -\sqrt{g/8}$ and $0\leq t \leq  1/(4\sqrt{g/8})$,
\begin{align*}
S_t \geq h-gt^2/2 + at  \geq 
h-gt^2/2 -\sqrt{g/8}t  \geq
h-1/2,
\end{align*}
so the analogous argument gives
\begin{align*}
\EE\left(\stau_{h-1} \land \vtau_{a-1} \mid V_0 = a , H_0 = h\right) \geq C,
\end{align*}
completing the proof of part (vi) of the lemma in the case $h\geq 1$ and $a\in\Reals$.
\end{proof}

\begin{lem}\label{no5.1}
Assume that $V_0=-g, H_0=0$ and let $\{\zeta_k\}_{k \ge 0}$ be the renewal times defined in \eqref{align:zeta}. Then there exists $a_0>0$ and positive constants $C_k$, $k=1,\dots, 6$, such that for $a,r \ge a_0$,
\begin{align}\label{no5.2}
C_1 \frac 1 {a}e^{-a^2 + 2 a(g-1)} \le \Prob&\left(\vtau_{-a} < \zeta_1\right) \le 
C_2  e^{-a^2 + 2 a(g+1)},\\
\label{no5.3}
C_3 \frac 1 {a}e^{-a^2 - 2 a(g+1)} 
\le \Prob&\left(\vtau_a < \zeta_1 \right) 
\le C_4 \frac 1 a e^{-a^2 - 2 a(g-1)},\\
\label{no5.4}
C _5 \frac 1 {\sqrt{r}} e^{-2gr} \le \Prob&\left(\stau_r < \zeta_1\right) \le C_6 e^{-2gr}.
\end{align}
\end{lem}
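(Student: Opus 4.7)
All six bounds will be deduced from the explicit stationary density of Theorem~\ref{thm:stat} via the renewal identity
\begin{align*}
\pi(A)\,\EE_R(\zeta_1)=\EE_R\int_0^{\zeta_1}\Indicator_A(Z_s)\,ds
\end{align*}
of Theorem~\ref{thm:stationary}, combined with the mean hitting-time estimates of Lemma~\ref{no5.5}. Direct computation from $\xi(v,h)=(2g/\sqrt\pi)e^{-2gh}e^{-(v+g)^2}$ gives the standard asymptotics
\begin{align*}
\pi(V\ge a)\sim\frac{e^{-(a+g)^2}}{2\sqrt\pi(a+g)},
\qquad \pi(V\le -a)\sim\frac{e^{-(a-g)^2}}{2\sqrt\pi(a-g)},
\qquad \pi(H\ge r)=e^{-2gr},
\end{align*}
which already produce the exponents appearing in \eqref{no5.2}--\eqref{no5.4}. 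The $\pm 1$ shifts in the $a$-linear parts of those exponents are generated by invoking stationarity at neighbouring levels $a\pm 1$ or $r\pm 1$; for instance $-(a-1+g)^2=-a^2-2a(g-1)-(g-1)^2$ and $-(a+1+g)^2=-a^2-2a(g+1)-(g+1)^2$.

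\textbf{Upper bounds (from lower sojourn estimates).} For \eqref{no5.3}, on $\{\vtau_a<\zeta_1\}$ the velocity reaches $a$ via a local-time push, so $H_{\vtau_a}=0$. The strong Markov property at $\vtau_a$ and Lemma~\ref{no5.5}(iv) give $\EE(\vtau_{a-1}\mid V_0=a,H_0=0)\ge 1/g$, during which $V\ge a-1$; hence $\EE_R\int_0^{\zeta_1}\Indicator_{\{V_s\ge a-1\}}\,ds\ge (1/g)\Prob_R(\vtau_a<\zeta_1)$, which combined with stationarity at $A=\{V\ge a-1\}$ yields the claim. The upper bound in \eqref{no5.2} is identical in structure, using Lemma~\ref{no5.5}(ii) (whose $C/a$ sojourn lower bound cancels the $1/a$ prefactor in $\pi(V\le -a+1)$ and leaves no polynomial factor). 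The upper bound in \eqref{no5.4} requires the finer decomposition of Theorem~\ref{thm:gaponeside} between up- and down-excursions, together with Lemma~\ref{no5.5}(vi), to show that the sojourn in $\{H\ge r\}$ after $\stau_r$ is uniformly bounded below by a constant.

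\textbf{Lower bounds (from upper sojourn estimates).} For \eqref{no5.3}, use $\Prob_R(\vtau_a<\zeta_1)\ge\Prob_R(\vtau_{a+1}<\zeta_1)$. Inequality \eqref{align:uni} gives $\EE(\vtau_a\mid V_0=v,H_0=0)\le C$ for $v\in(a,a+1]$, and the probability of re-entering $\{V\ge a+1\}$ from $V=a$ is bounded away from $1$ uniformly in $a$ (Theorem~\ref{thm:fluconeside} applied at a constant level), so a geometric summation gives an $O(1)$ upper bound on the total sojourn in $\{V\ge a+1\}$ during one cycle. Stationarity then yields $\Prob_R(\vtau_{a+1}<\zeta_1)\ge C'\pi(V\ge a+1)$, which after expanding $-(a+1+g)^2$ is precisely the claim. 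The lower bound in \eqref{no5.2} is analogous at level $-a-1$: when $V$ is deeply negative the inert particle falls rapidly, so $L$ accumulates at rate proportional to $a$ and the sojourn in $\{V\le -a-1\}$ is of order $1/a$ (conditional on a moderately small gap at $\vtau_{-a-1}$, controlled via Theorem~\ref{thm:gaponeside}). The lower bound in \eqref{no5.4} uses Lemma~\ref{no5.5}(v), whose $C\sqrt r$ sojourn bound produces exactly the $1/\sqrt r$ prefactor.

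\textbf{Main obstacle.} The chief technical difficulty is producing uniform-in-level sojourn bounds with the correct polynomial prefactor. For the lower bounds one must simultaneously rule out long sojourns arising from (i) repeated re-entries into the target half-space, controlled by uniform escape-probability estimates from Theorem~\ref{thm:fluconeside}, and (ii) atypical entrance states with large initial gap $H$, controlled by Theorem~\ref{thm:gaponeside}. The upper bound in \eqref{no5.4} has its own subtlety: the sharp separation between the rates $e^{-Cr}$ and $e^{-Cr^{3/2}}$ in Theorem~\ref{thm:gaponeside} for up- versus down-excursions is essential, since a generic sojourn estimate would introduce a spurious polynomial prefactor.
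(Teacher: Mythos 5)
Your overall strategy — combining the renewal representation $\pi(A)\Expect_R(\zeta_1)=\Expect_R\int_0^{\zeta_1}\Indicator_A(Z_s)\,ds$ from Theorem~\ref{thm:stationary} with the explicit density \eqref{eq:density} and the hitting-time moments of Lemma~\ref{no5.5} — is exactly the paper's approach, and your treatment of the two upper bounds in \eqref{no5.2} and \eqref{no5.3} is essentially correct. However, two of your six steps contain genuine gaps.

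First, the upper bound in \eqref{no5.4}. You claim that Theorem~\ref{thm:gaponeside} plus Lemma~\ref{no5.5}(vi) shows the sojourn in $\{H\ge r-1\}$ after $\stau_r$ is "uniformly bounded below by a constant". This is false: Lemma~\ref{no5.5}(vi) gives the bound $C/((-a)\lor 1)$, which degenerates as the entrance velocity $a=V_{\stau_r}$ becomes large and negative (a fast-falling inert particle closes the gap quickly). One cannot avoid this; the paper partitions the event $\{\stau_r<\zeta_1\}$ over integer velocity levels $k$ at the entry time $\stau_r$, applies the $C/((-k)\lor 1)$ sojourn bound separately for each $k$, and then sums. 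The sum converges precisely because the corresponding slice of the stationary measure $\pi([k-2,k+1)\times[r-1,\infty))$ carries a Gaussian factor $e^{-((|k|+g-2)\lor 0)^2}$ that overwhelms the polynomial loss $(-k)\lor 1$. Without this decomposition you do not get the clean exponent $e^{-2gr}$ with no polynomial correction.

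Second, the lower bounds. Your plan for \eqref{no5.3} relies on a geometric-summation argument for the number of re-entries into $\{V\ge a+1\}$, with the re-entry probability "bounded away from 1 uniformly in $a$ via Theorem~\ref{thm:fluconeside}". That theorem bounds the probability of a large velocity overshoot, not the probability of returning to a neighboring level from an unspecified post-exit state (which could have a small gap and hence a high collision rate); it is not clear that this return probability is uniformly bounded away from $1$, and you would still need to control the sojourn per visit with the correct $a$-dependence. The paper sidesteps all of this with a much cruder but sufficient overestimate: after the first entry time $T$ into the target slab, Remark~\ref{oc16.1} guarantees that $\zeta_1-T=\vtau_{-g}\circ\theta_T$ (once $V$ crosses $-g$ from above during the cycle, it never re-exceeds $-g$ before the renewal), so Lemma~\ref{no5.5}(i),(iii),(v) give upper bounds of order $a$, $a$, $\sqrt r$ respectively on the entire remainder of the cycle. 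This is where the polynomial prefactors $1/a$ and $1/\sqrt r$ in the lower bounds come from — not, as you suggest, from a sharp $O(1/a)$ sojourn estimate conditioned on a moderate gap. Your sharper sojourn claims are plausible heuristically but are neither proved nor needed, and the reliance on Theorem~\ref{thm:gaponeside} to control the entrance gap is unnecessary once one restricts to the slab $H\le 1$ (for the $V$ bounds), which Lemma~\ref{no5.5}(i) already accommodates.
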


\begin{proof}

Recall from Lemma \ref{lem:estzeta} that $\EE (\zeta_1 )< \infty$. 
We can apply \eqref{eq:stationarypi} to a set 
$A \subset \Reals\times [0,\infty)$ and use \eqref{eq:density} 
to see that
\begin{align}\label{no8.30}
\Expect\left( \int_{0}^{\zeta_1} \Indicator_A(Z_u) du \right) 
= 
C \pi(A)
= C \iint\limits _A
\frac{2g}{\sqrt{\pi} } e^{-2gh}e^{-(v+g)^2}  dv dh .
\end{align}

(i)  Fix $a\geq g +2$.
We apply \eqref{no8.30} to the set 
$A = (-a-1, -a]\times [0,1]$  
to see that
\begin{align}\label{no7.7}
\Expect&\left( \int_{0}^{\zeta_1} \Indicator(-a-1 \leq V_u\leq -a, H_u\leq 1) du \right) 
= C \int_0^1 \int_{-a-1}^{-a}
\frac{2g}{\sqrt{\pi} } e^{-2gh}e^{-(v+g)^2}  dv dh \\
&\geq C  \int_{-a-1}^{-a}
 e^{-(v+g)^2}  dv \nonumber
\geq C  e^{-(a+1-g)^2}
\geq C'  e^{-a^2 - 2 a(1-g)}.
\end{align}

Let $T=\inf\{t\geq 0: V_t \in [-a-1,-a], H_t \leq 1\}$. Note that  \eqref{no7.7} only involves paths of $(V_t,H_t)$ such that $T<\zeta_1$. It follows from Remark \eqref{oc16.1} that $T+\vtau_{-g}\circ\theta_T = \zeta_1$ on the event $\{T < \zeta_1\}$. Lemma \ref{no5.5} (i) and the strong Markov property applied at $T$ imply that
\begin{align*}
\Expect&\left( \int_{0}^{\zeta_1} \Indicator(-a-1\leq V_u\leq -a, H_u\leq 1) du \right)
\leq
\EE\left(\Indicator(T < \zeta_1)(\zeta_1-T)\right) \\
& \leq \PP\left(T < \zeta_1\right) \times
\sup_{v \in [-a-1,-a], h \in [0,1]}\EE(\vtau_{-g} \mid V_0=v, H_0=h)
\leq C a 
\PP\left(T < \zeta_1\right).
\end{align*}
This and \eqref{no7.7} yield
\begin{align*}
\PP\left(\vtau_{-a} < \zeta_1 \right) 
\geq 
\PP\left(T < \zeta_1\right)
\geq
C \frac 1 {a}e^{-a^2 - 2 a(1-g)}.
\end{align*}
This proves the lower bound in \eqref{no5.2} for $a \geq g+2$.

\medskip

(ii) Fix $a\geq g +2$.
We apply \eqref{no8.30} to the set 
$A = (-\infty, -a+1]\times [0,\infty)$ and use \eqref{oc13.3}
to see that
\begin{align}\label{no5.7}
\Expect&\left( \int_{0}^{\zeta_1} \Indicator(V_u\leq -a+1) du \right) 
= C \int_0^\infty \int^{-a+1}_{-\infty}
\frac{2g}{\sqrt{\pi} } e^{-2gh}e^{-(v+g)^2}  dv dh \nonumber \\
&= C  \int^{-a+1}_{-\infty}
\frac{1}{\sqrt{\pi} } e^{-(v+g)^2}  dv
\leq C' \frac 1 {a-1-g}  e^{-(a-1-g)^2}
\leq C'' \frac 1 a e^{-a^2 + 2 a(g+1)}.
\end{align}

Let $T = \inf\{t\geq \vtau_{-a}: V_t = -a+1\}$. Note that if $\vtau_{-a} < \zeta_1$ then $T< \zeta_1$. Lemma \ref{no5.5} (ii) implies that
\begin{align*}
\Expect\left( \int_{0}^{\zeta_1} \Indicator(V_u\leq -a+1) du \right) 
\geq
\PP\left(\vtau_{-a} < \zeta_1 \right)
\times \inf_{h \ge 0}\EE( \vtau_{-a+1} \mid V_0=a, H_0=h)
\geq \frac C a \PP\left(\vtau_{-a} < \zeta_1 \right),
\end{align*}
so \eqref{no5.7} yields
\begin{align*}
\PP\left(\vtau_{-a} < \zeta_1 \right) \leq C  e^{-a^2 + 2 a(g+1)}.
\end{align*}
This proves the upper bound in \eqref{no5.2} for $a \geq g+2$.

\medskip

(iii) Fix $a\geq 2$.
We apply \eqref{no8.30} to the set 
$A = [a,\infty) \times [0,\infty)$  
to see that
\begin{align}\label{no8.3}
\Expect&\left( \int_{0}^{\zeta_1} \Indicator(V_u\geq a) du \right) 
= C \int_0^\infty \int_a^\infty
\frac{2g}{\sqrt{\pi} } e^{-2gh}e^{-(v+g)^2}  dv dh \nonumber\\
&\geq C  \int_a^\infty
 e^{-(v+g)^2}  dv 
\geq C'  e^{-(a+1+g)^2}
\geq C''  e^{-a^2 - 2 a(g+1)}.
\end{align}

It follows from Remark \eqref{oc16.1} that on the event $\{\vtau_{-g} < \zeta_1\}$, $V$ will not take values greater than $-g$ on the interval $ [ \vtau_{-g}, \zeta_1]$.  Lemma \ref{no5.5} (iii) and the strong Markov property applied at $\vtau_{a}$ imply that
\begin{align*}
\Expect&\left( \int_{0}^{\zeta_1} \Indicator(V_u\geq a) du \right) 
\leq
\PP\left(\vtau_{a} < \zeta_1\right)
\EE(\vtau_{-g} \mid V_0=a, H_0=0)
\leq C a 
\PP\left(\vtau_{a} < \zeta_1\right).
\end{align*}
This and \eqref{no8.3} yield
\begin{align*}
\PP\left(\vtau_{a} < \zeta_1 \right) 
\geq 
C \frac 1 {a}e^{-a^2 - 2 a(g+1)}.
\end{align*}
This proves the lower bound in \eqref{no5.3} for $a \geq 2$.

\medskip

(iv) Fix $a\geq 2$.
We apply \eqref{no8.30} to the set 
$A = [ a-1, \infty)\times [0,\infty)$ and use \eqref{oc13.3}
to see that
\begin{align}\label{no5.6}
\Expect&\left( \int_{0}^{\zeta_1} \Indicator(V_u\geq a-1) du \right) 
= C \int_0^\infty \int_{a-1}^\infty
\frac{2g}{\sqrt{\pi} } e^{-2gh}e^{-(v+g)^2}  dv dh \nonumber \\
&= C  \int_{a-1}^\infty
\frac{1}{\sqrt{\pi} } e^{-(v+g)^2}  dv
\leq C' \frac 1 {a-1+g}  e^{-(a-1+g)^2}
\leq C'' \frac 1 a e^{-a^2 - 2 a(g-1)}.
\end{align}

Let $T = \inf\{t\geq \vtau_a: V_t = a-1\}$. Note that if $\vtau_a < \zeta_1$ then $T< \zeta_1$. Lemma \ref{no5.5} (iv) implies that
\begin{align*}
\Expect\left( \int_{0}^{\zeta_1} \Indicator(V_u\geq a-1) du \right) 
\geq
\PP\left(\vtau_a < \zeta_1 \right)
\EE(\vtau_{a-1} \mid V_0=a, H_0=0)
\geq (1/g)\PP\left(\vtau_a < \zeta_1 \right),
\end{align*}
so \eqref{no5.6} yields
\begin{align*}
\PP\left(\vtau_a < \zeta_1 \right) \leq C \frac 1 a e^{-a^2 - 2 a(g-1)}.
\end{align*}
This proves the upper bound in \eqref{no5.3} for $a \geq 2$.

\medskip

(v) Consider $r\geq 1$. 
We apply \eqref{no8.30} to the set 
$A = [-g-2, -g-1]\times [r,\infty)$  
to see that
\begin{align}\label{no8.12}
\Expect&\left( \int_{0}^{\zeta_1} \Indicator(-g-2 \leq V_u\leq -g-1, H_u\geq r) du \right) 
\nonumber \\
&= C \int_r^\infty \int_{-g-2}^{-g-1}
\frac{2g}{\sqrt{\pi} } e^{-2gh}e^{-(v+g)^2}  dv dh 
\geq C  e^{-2gr}.
\end{align}

Let $T=\inf\{t\geq 0: V_t \in [-g-2, -g-1], H_t \geq r\}$.
It follows from Remark \eqref{oc16.1} that on the event $\{T < \zeta_1\}$, $ \inf\{t\geq T: V_t = -g\}= \zeta_1$.  Lemma \ref{no5.5} (v) and the strong Markov property applied at $T$ imply that
\begin{align*}
\Expect&\left( \int_{0}^{\zeta_1} \Indicator(-g-2 \leq V_u\leq -g-1, H_u\geq r) du \right) \\
& \leq
\PP\left(T < \zeta_1\right)
\times \sup_{v \in [-g-2, -g-1], h \ge r}\EE(\vtau_{-g} \mid V_0=v, H_0=h)
\leq C \sqrt{r}
\PP\left(T < \zeta_1\right).
\end{align*}
This and \eqref{no8.12} yield
\begin{align*}
\PP\left(\stau_{h} < \zeta_1 \right) 
\geq 
\PP\left(T < \zeta_1\right)
\geq
C \frac 1 {\sqrt{r}}e^{-2gr}.
\end{align*}
This proves the lower bound in \eqref{no5.4} for $r \geq 1$.

\medskip

(vi)  Fix $r\geq 2$.
We apply \eqref{no8.30} to the set 
$A = [k-2, k+1)\times [r-1,\infty)$ and use  \eqref{oc13.3}
to see that
\begin{align}\label{no5.8}
\Expect&\left( \int_{0}^{\zeta_1} \Indicator(k-2 \leq V_u< k+1, H_u \geq r-1) du \right)  \\
&= C \int_{r-1}^\infty \int_{k-2}^{k+1}
\frac{2g}{\sqrt{\pi} } e^{-2gh}e^{-(v+g)^2}  dv dh 
= C e^{-2g(r-1)} 
\int_{k-2}^{k+1}\frac{1}{\sqrt{\pi} } e^{-(v+g)^2}  dv \nonumber\\
&\leq C e^{-2g(r-1)}
\frac 1 {|k|+g +1} e^{-((|k|+g-2)\lor 0)^2}.
\nonumber
\end{align}

Let  
\begin{align*}
T_k = \inf\{t\geq \stau_r: H_t = r-1
\text{  or  }
V_t = k-2
\}.
\end{align*}
Note that if $\stau_r < \zeta_1$ then $T_k\leq \zeta_1$ for every integer $k$. Lemma \ref{no5.5} (vi) implies that
\begin{align*}
\Expect&\left( \int_{0}^{\zeta_1} \Indicator(k-2 \leq V_u< k+1, H_u \geq r-1) du \right) \\
&\geq
\PP\left(\stau_r < \zeta_1, k-1\leq V_{\stau_r}< k  \right)
\times \inf_{v \in [k-1,k]}\EE(\stau_{r-1} \land \vtau_{v-1} \mid V_0=v, H_0=r )\\
&\geq C \frac1 {(-k) \lor 1} 
\PP\left(\stau_r < \zeta_1, k-1\leq V_{\stau_r}< k  \right),
\end{align*}
so \eqref{no5.8} yields
\begin{align*}
\PP\left(\stau_r < \zeta_1, k-1\leq V_{\stau_r}< k  \right)
 \leq C ((-k)\lor 1)  e^{-2g(r-1)}
\frac 1 {|k|+g +1} e^{-((|k|+g-2)\lor 0)^2}.
\end{align*}
It follows that
\begin{align*}
\PP&\left(\stau_r < \zeta_1 \right)
= \sum_{k=-\infty}^\infty
\PP\left(\stau_r < \zeta_1, k-1\leq V_{\stau_r}< k  \right)\\
& \leq \sum_{k=-\infty}^\infty C ((-k)\lor 1)  e^{-2g(r-1)}
\frac 1 {|k|+g +1} e^{-((|k|+g-2)\lor 0)^2}
\leq C' e^{-2gr}.
\end{align*}
This proves the upper bound in \eqref{no5.4} for $r \geq 2$.
\end{proof}

\begin{proof}[Proof of Theorem \ref{thm:velfluc}]

Fix arbitrarily small $\eps >0$.
By \eqref{no5.3} of Lemma \ref{no5.1}, for  $n \ge 0$,
$$
\Prob\left(\sup_{t \in [\zeta_n, \zeta_{n+1}]}V_t > \sqrt{(1+\eps) \log n}\right) \le \frac{C}{n^{1+\eps/2}}
$$
and
$$
\Prob\left(\sup_{t \in [\zeta_n, \zeta_{n+1}]}V_t > \sqrt{(1-\eps)\log n}\right) \ge \frac{C}{n^{1-\eps/2}}.
$$
As the $\{\zeta_n\}_{n \ge 0}$ are renewal times, therefore by the Borel Cantelli lemma, a.s.,
\begin{equation}\label{equation:BC}
\sqrt{1-\eps} \le \limsup_{n \rightarrow \infty}\frac{\sup_{t \in [\zeta_n, \zeta_{n+1}]}V_t}{\sqrt{\log n}} \le \sqrt{1+\eps}.
\end{equation}
By Lemma \ref{lem:estzeta}, $\Expect(\zeta_1)< \infty$ and thus, by the Strong Law of Large Numbers, $\zeta_n/n \rightarrow \Expect(\zeta_1)$, a.s. From the lower bound in \eqref{equation:BC}, with probability 1, there exists a subsequence $n_k \rightarrow \infty$ and $t_{n_k} \in [\zeta_{n_k}, \zeta_{n_k+1}]$ such that $V_{t_{n_k}}/\sqrt{\log n_k} \ge \sqrt{1-\eps}$. Moreover, the SLLN implies that, a.s., $\log t_{n_k} \le (1+\eps)\log n_k$ for sufficiently large $k$. Therefore, a.s.,
$$
\frac{V_{t_{n_k}}}{\sqrt{\log t_{n_k}}} \ge \sqrt{\frac{1-\eps}{1+\eps}}
$$
for sufficiently large $k$.
Since this holds for every $\eps >0$, we obtain, a.s.,
\begin{equation}\label{no8.20}
 \limsup_{t \rightarrow \infty}\frac{V_t}{\sqrt{\log t}} \geq 1.
\end{equation}

From the upper bound in \eqref{equation:BC} and the SLLN, we see that, a.s., there is a positive integer $n_0$ such that for all $n \ge n_0$, $V_t/ \sqrt{\log n} \le \sqrt{1+\eps}$ and $\log t \ge (1-\eps)\log n$ for all $t \in [\zeta_n, \zeta_{n+1}]$. These imply
$$
\frac{V_t}{\sqrt{\log t}} \le \sqrt{\frac{1+\eps}{1-\eps}}
$$
for all $t \ge \zeta_{n_0}$. Since $\eps>0$ can be taken arbitrarily small,
we have, a.s.,
\begin{equation*}
 \limsup_{t \rightarrow \infty}\frac{V_t}{\sqrt{\log t}} \leq 1.
\end{equation*}
This inequality and \eqref{no8.20} prove the second equality in \eqref{equation:flucplus}.

The proofs of the first equality in \eqref{equation:flucplus} and the equality in \eqref{equation:gapplus} follow similarly by using \eqref{no5.2} and \eqref{no5.4} of Lemma \ref{no5.1}, respectively. The claim in \eqref{equation:gapminus} follows from the fact that $\zeta_k$'s are i.i.d. with $\EE \zeta_k>0$, and  $S_t-X_t=0$ if
$t$ is a renewal time.
\end{proof}

\begin{proof}[Proof of Theorem \ref{thm:stronglaw}]
We have
\begin{align*}
X_t-(B_t-gt)&=(X_0+V_0) -V_t,\\
S_t-(B_t-gt)&= X_t-(B_t-gt) + (S_t - X_t)
= (X_0+V_0) -V_t + (S_t - X_t) .
\end{align*}
 These identities and Theorem \ref{thm:velfluc}
easily imply the assertions made in Theorem \ref{thm:stronglaw}.
\end{proof}

\noindent\textbf{Acknowledgements}\\
We are grateful to Amarjit Budhiraja and Ruth Williams for very helpful advice.
S.B. gratefully acknowledges support from EPSRC Research Grant EP/K013939. K.B. was partially supported by NSF Grant DMS-1206276. M.D. was supported by Programa Iniciativa Cientifica Milenio grant number NC130062 through the Nucleus Millenium Stochastic Models of Complex and Disordered Systems.
\bibliographystyle{plain}
\bibliography{inertgrav}

\begin{thebibliography}{10}

\bibitem{BBCH}
Richard~F. Bass, Krzysztof Burdzy, Zhen-Qing Chen, and Martin Hairer.
\newblock Stationary distributions for diffusions with inert drift.
\newblock {\em Probab. Theory Related Fields}, 146(1-2):1--47, 2010.

\bibitem{Bert}
Jean Bertoin.
\newblock {\em L\'evy processes}, volume 121 of {\em Cambridge Tracts in
  Mathematics}.
\newblock Cambridge University Press, Cambridge, 1996.

\bibitem{BW}
Krzysztof Burdzy and David White.
\newblock Markov processes with product-form stationary distribution.
\newblock {\em Electron. Commun. Probab.}, 13:614--627, 2008.

\bibitem{IkW89}
Nobuyuki Ikeda and Shinzo Watanabe.
\newblock {\em Stochastic differential equations and diffusion processes},
  volume~24 of {\em North-Holland Mathematical Library}.
\newblock North-Holland Publishing Co., Amsterdam, second edition, 1989.

\bibitem{KaR14}
Weining Kang and Kavita Ramanan.
\newblock Characterization of stationary distributions of reflected diffusions.
\newblock {\em The Annals of Applied Probability}, 24(4):1329--1374, 2014.

\bibitem{karatzas}
Ioannis Karatzas and Steven~E. Shreve.
\newblock {\em Brownian motion and stochastic calculus}, volume 113 of {\em
  Graduate Texts in Mathematics}.
\newblock Springer-Verlag, New York, second edition, 1991.

\bibitem{KT}
Samuel Karlin and Howard~M. Taylor.
\newblock {\em A second course in stochastic processes}.
\newblock Academic Press, Inc. [Harcourt Brace Jovanovich, Publishers], New
  York-London, 1981.

\bibitem{knight}
Frank~B Knight.
\newblock On the path of an inert object impinged on one side by a brownian
  particle.
\newblock {\em Probability theory and related fields}, 121(4):577--598, 2001.

\bibitem{kyprianou}
Andreas~E Kyprianou.
\newblock {\em Introductory lectures on fluctuations of L{\'e}vy processes with
  applications}.
\newblock Springer Science \& Business Media, 2006.

\bibitem{ross}
Sheldon~M Ross.
\newblock {\em Introduction to probability models}.
\newblock Academic press, 2014.

\bibitem{StV71}
Daniel~W. Stroock and S.~R.~S. Varadhan.
\newblock Diffusion processes with boundary conditions.
\newblock {\em Comm. Pure Appl. Math.}, 24:147--225, 1971.

\bibitem{thorisson}
Hermann Thorisson.
\newblock {\em Coupling, stationarity, and regeneration}.
\newblock Springer New York, 2000.

\bibitem{hofstad}
Remco Van~der Hofstad, AJEM Janssen, Johan~SH Van~Leeuwaarden, et~al.
\newblock Critical epidemics, random graphs, and {B}rownian motion with a
  parabolic drift.
\newblock {\em Advances in Applied Probability}, 42(4):1187--1206, 2010.

\bibitem{Wat71}
Shinzo Watanabe.
\newblock On stochastic differential equations for multi-dimensional diffusion
  processes with boundary conditions.
\newblock {\em J. Math. Kyoto Univ.}, 11:169--180, 1971.

\bibitem{White}
David White.
\newblock Processes with inert drift.
\newblock {\em Electron. J. Probab.}, 12:no. 55, 1509--1546, 2007.

\end{thebibliography}
\bigskip
\bigskip
\noindent
{\sc Sayan Banerjee}, {\em University of North Carolina at Chapel Hill}, {\tt <sayan@email.unc.edu>} \\
{\sc Krzysztof Burdzy}, {\em University of Washington}, {\tt <burdzy@math.washington.edu>}\\
{\sc Mauricio Duarte}, {\em Universidad Andres Bello}, {\tt <mauricio.duarte@unab.cl>}
\end{document}